\documentclass[reqno,12pt,centertags]{amsart}
\usepackage{amsmath,amsthm,amscd,amssymb,latexsym,upref,stmaryrd}
\usepackage[cp1251]{inputenc}
\usepackage[english]{babel}
\usepackage{mathtext}
\usepackage{amsfonts}
\usepackage{graphicx}
\usepackage[numbers,sort&compress]{natbib}
\usepackage{hyperref}
\newcommand*{\mailto}[1]{\href{mailto:#1}{\nolinkurl{#1}}}
\textwidth=17 cm \textheight=22cm \oddsidemargin=-5mm
\evensidemargin=-5mm
\mathsurround=2pt \topmargin=0cm

\usepackage{pgfplots}
\usepackage{amsmath}
\usepackage{amsthm}
\newtheorem{corollary}{Corollary}
\newtheorem{theorem}{Theorem}
\newtheorem{lemma}{Lemma}
\newtheorem{remark}{Remark}
\theoremstyle{proposition}
\newtheorem{proposition}{Proposition}
\newtheorem{ip}{Inverse Problem}
\newtheorem{algorithm}{Algorithm}
\numberwithin{equation}{section}
\begin{document}
\thispagestyle{empty}

\noindent{\large\bf On the local solvability and stability of the partial inverse problems
for the non-self-adjoint Sturm-Liouville operators with a discontinuity}
\\[0.5cm]
\noindent {\bf  Xiao-Chuan Xu}\footnote{School of Mathematics and Statistics, Nanjing University of Information Science and Technology, Nanjing, 210044, Jiangsu,
People's Republic of China, {\it Email:
xcxu@nuist.edu.cn}}
{\bf, Chuan-Fu Yang}\footnote{School of Mathematics and Statistics, Nanjing University of Science and Technology, Nanjing, 210094, Jiangsu,
People's Republic of China, {\it Email: chuanfuyang@njust.edu.cn}}
{ \bf and  Natalia Pavlovna Bondarenko}\footnote{S.M. Nikolskii Mathematical Institute, Peoples' Friendship University of Russia (RUDN University), 6 Miklukho-Maklaya Street, Moscow, 117198, Russian, Email: {\it bondarenkonp@sgu.ru}}\footnote{ Moscow Center of Fundamental and Applied Mathematics, Lomonosov Moscow State University, Moscow 119991, Russia}
\\

\noindent{\bf Abstract.}
{In this work, we study the inverse spectral problems for the  Sturm-Liouville operators on $[0,1]$ with complex coefficients and a discontinuity at $x=a\in(0,1)$. Assume that the potential on $(a,1)$ and some parameters in the discontinuity and boundary conditions  are given. We recover the potential on $(0,a)$ and the other parameters from the eigenvalues. This is the so-called partial inverse problem. The local solvability and stability of the partial inverse problems are obtained  for $a\in(0,1)$,  in which the error caused by the given partial potential is considered. As a by-product, we also obtain two new uniqueness theorems for the partial inverse problem.

}

\medskip
\noindent {\it Keywords:} Non-self-adjoint Sturm-Liouville operator, inverse spectral problem,  discontinuity, local solvability, stability

\medskip
\noindent {\it 2020 Mathematics Subject Classification:} 34A55; 34B05;  34L40; 47E05
\section{introduction}

Consider the following Sturm-Liouville problem
\begin{equation}\label{1}
-y^{\prime \prime}(x)+q(x)y(x)=\lambda y(x), \quad x\in(0,a)\cup (a,1),
\end{equation}
with the boundary conditions
\begin{equation}\label{2}
y'(0)-hy(0)=0,\quad y'(1)+Hy(1)=0,
\end{equation}
and the jump conditions
\begin{equation}\label{3}
y(a+0)=a_1y(a-0),\quad y'(a+0)=a_1^{-1}y'(a-0)+a_2y(a-0),
\end{equation}
where $\lambda $ is spectral parameter, the complex-valued potential $q$ belongs to $L^2(0,1)$, $h,H,a_2\in \mathbb{C}$, $a_1>0$ and $a\in (0,1)$.

Inverse spectral  problems for the Sturm-Liouville operators consist in recovering the coefficients of the operators from their spectral characteristics. The basic results of inverse  Sturm-Liouville problems  can be found in, e.g.,  the monographs \cite{FYU1,PT,VM,Lev}. The Sturm-Liouville problems with discontinuities inside the interval arise in mathematics, mechanics, radio electronics, geophysics, and other fields of science and technology. Such problems are usually connected with discontinuous material properties, for example, the transmission eigenvalue problem with a discontinuous index of refraction \cite{GP}, the  geophysical models for oscillations of the
Earth \cite{AND,HAL} and the electromagnetic and elastic inverse problems  for media with
discontinuous material properties \cite{KRU}.

The Sturm-Liouville problem \eqref{1}--\eqref{3} has attracted much attention of scholars (see, e.g., \cite{AM,FYU2,HAL,SAT,SYU,W,CWI,XY,Y0,YB,YUR1,ZBY} and the references therein). In order to uniquely recover the potential on $[0,1]$ and all the coefficients, one needs to know two spectra \cite{YUR1,SAT,ZBY}. However, when partial information on the potential and a part of coefficients are known a priori, then only a part of two spectra are needed (see, e.g., \cite{HAL,CWI,SYU,XY,Y0}). In particular, roughly speaking, if $q(x)$ is known a priori on a half interval, then only one spectrum is sufficient; if $q(x)$ is given on a subinterval more than one half, then only a part of one spectrum is enough. An overview of classical and modern results on partial inverse Sturm-Liouville problems is presented in \cite{Bond23}.
Without the jump conditions, the uniqueness for solutions of partial inverse problems was considered in \cite{dGS,GS2,MH,MH1,HL} and other works.

In this paper, we consider the  local solvability and stability the partial inverse problems for the problem \eqref{1}--\eqref{3} with complex coefficients, in which, generally  speaking,  a part of coefficients and $q(x)$ on $(a,1)$ are known a priori. It is known \cite{YB} that  the problem \eqref{1}--\eqref{3} is equivalent to the following problem $B_1=B(d_1,d_2,q_1,q_2,h,H ,a_1,a_2):$
\begin{equation}\label{1s}
{-y_j^{\prime \prime}(x)+q_j(x)y_j(x)=\lambda y_j(x), \quad 0<x<d_j},\quad j=1,2,
\end{equation}
\begin{equation}\label{2s}
y_1'(0)-hy_1(0)=0,\quad y_2'(0)-Hy_2(0)=0,
\end{equation}
\begin{equation}\label{3s}
y_1(d_1)-a_1^{-1}y_2(d_2)=0,\quad
y_1'(d_1)+ [a_1y_2'(d_2)+a_2y_2(d_2)]=0,
\end{equation}
where $d_1=a$, $d_2=1-a$, $q_1(x)=q(x)$ for $x\in(0,d_1)$ and $q_2(x)=q(1-x) $ for $x\in (0,d_2)$.

Since the coefficients are complex, the problem \eqref{1}--\eqref{3} is  non-self-adjoint. Thus, there may exist multiple and non-real eigenvalues. The asymptotic behavior of the eigenvalues is the same as that in the self-adjoint case. So, the appearance of non-real eigenvalues cause almost no difficulty in studying the inverse problems. However, the appearance of multiple eigenvalues will cause the main difficulties in the non-self-adjoint cases when we  study
the inverse problems, especially, for the local solvability and stability (see, e.g.,  \cite{BB,BN1,BN2,B,BK,HK,MW}). In order to overcome the difficulties caused by multiple eigenvalues, we shall develop the methods and techniques of \cite{BB,BN1,BN2,BK,HK,MW}.
Let  $\{\lambda_{1,n}\}_{n\ge0}$ (counted with multiplicities) be the eigenvalues of the problem $B_1$.
\begin{ip}\label{ip1}
 Assume that $a\in(0,1/2]$, and let $I$ be the subset of $\mathbb{N}_0:=\mathbb{N}\cup\{0\}$. Given $\{\lambda_{1,n}\}_{n\in I}$, $a_1$, $a_2$, $H$, $\omega_1:=h+\frac{1}{2}\int_0^aq_1(x)dx$ and $q_2$, find $q_1$ and $h$.
\end{ip}
In the self-adjoint case, the local solvability and stability of Inverse Problem \ref{ip1} are proved  in \cite{YB}, where, in particular, if $a=1/2$ then $I=\mathbb{N}_0$ and the coefficients $a_2,\omega_1$ can be recovered from the spectrum.
 In this paper, we consider the local solvability and stability  for the non-self-adjoint case. Moreover, we shall also study the case $a\in(1/2,1)$.
It is known that if  $a\in(1/2,1)$ and $q(x)$ on $(a,1)$ is given, then one spectrum is not sufficient to uniquely determine the potential on the whole interval. One should add some other eigenvalues. Consider the problem $B_0=B(d_1,d_2,q_1,q_2,h,\infty ,a_1,a_2)$ which  means that $y'(1)+Hy(1)=0$ is replaced by $y(1)=0$.
Let $\{\lambda_{0,n}\}_{n\ge0}$ (counted with multiplicities) be the eigenvalues of the problem $B_0$.
Let us  consider the following Inverse Problem \ref{ip2}.

\begin{ip}\label{ip2}
Let $I_i$ ($i=0,1$) the subsets of $\mathbb{N}_0$. Given  $\{\lambda_{i,n}\}_{n\in I_i}\ (i=0,1)$, $a_1,$ $a_2$, $H$, $\omega_1:=h+\frac{1}{2}\int_0^aq_1(x)dx$ and $q_2$, find $q_1$ and $h$.
\end{ip}

Note that, since $a\in(0,1)$ in Inverse Problem \ref{ip2}, it includes Inverse Problem \ref{ip1} as a special case. For example, if $a\in(0,1/2]$, then we can put $I_1=I$ and $I_0=\emptyset$. The local solvability of  Inverse Problems \ref{ip2} depends on the index sets  $I_i$ ($i=0,1$). We will give the descriptions of these index sets in the corresponding main theorems. For convenience of formulation, we shall renumber the sequences $\{\lambda_{i,n}\}_{n\in I_i}$ by letting $\{\mu_{i,k}\}_{k\ge0}=\{\lambda_{i,n}\}_{n\in I_i}$ with $|\mu_{i,k+1}|\ge |\mu_{i,k}|$, $i=0,1$. Our results can also be generalized into the partial inverse problems from parts of $N+1$ spectra of the problems with different boundary conditions at $x=1$, where $N\ge1$. But the proportion of the needed eigenvalues should remain the same (see Remark \ref{remark2}).

Another motivation of this paper is that, in the local solvability and stability, the error caused by the given potential $q_2$ and the parameter $H$ should be  considered. In \cite{YB}, the authors gave the local solvability and stability for Inverse Problem \ref{ip1} by assuming that only the given subspectrum contains $\varepsilon$-error and the given potential $q_2$ and the parameter $H$ contain no error. However, as one of the input data, the given potential $q_2$ or the parameter $H$ may also contain $\varepsilon$-error. Therefore, for Inverse Problem \ref{ip1}, it is natural to consider the local solvability and stability with the subspectrum, the potential $q_2$ and $H$ containing $\varepsilon$-error.
For Inverse Problem \ref{ip2}, since partial eigenvalues of the problem $B_0$ are a part of the input data,  the given parameter $H$ is assumed to contain no error, and the subspectra and the potential $q_2$ contain $\varepsilon$-error.

 In addition to \cite{YB},  let us also discuss the essential novelties of our results comparing with some other previous studies. First, consider the case of real-valued potential $q_1(x)$ and simple eigenvalues $\{ \lambda_{i,n}\}$. In this special case, Inverse Problem \ref{ip2} can be treated as the problem of Horv\'{a}th \cite{MH} (see Remark~\ref{remnew}), which consists in recovering $q_1$ and $h$ from the eigenvalues $\{\lambda_n\}_{n\ge1}$
of the following problems
\begin{equation} \label{Horvath}
-y'' + q_1(x) y = \lambda y, \quad y'(0) - hy(0) = 0,\quad  y(d_1)\cos\alpha_n + y'(d_1)\sin\alpha_n = 0,\quad n \ge 1.
\end{equation}
We mean that the eigenvalues are taken from different spectra: $\lambda_n \in \sigma(q_1, h, \alpha_n)$, $n \ge 1$, where $\sigma(q_1, h, \alpha_n)$ is the spectrum of \eqref{Horvath}.
In the case of $h=\infty$ (i.e. the Dirichlet boundary condition at $x=0$), Horv\'{a}th \cite{MH} gave a necessary and sufficient condition for the uniqueness of the inverse problem solution. In the case of $h\in \mathbb{R}$, Horv\'ath has separately obtained a necessary condition and a sufficient one for the unique determination of $q_1$ and $h$. The necessary and sufficient conditions of \cite{MH} are formulated in terms of the closedness for some exponential systems.
Latter on, in the case of $h=\infty$, Horv\'{a}th and Kiss studied the stability for the self-adjoint case in \cite{HK0} and for the non-self-adjoint case in \cite{HK}.
However, in this paper, we investigate the local solvability of the inverse problem, which was not considered by Horv\'ath and Kiss. Moreover, in the case of multiple eigenvalues, our problem statement is different from \cite{MH, HK0, HK}, so it requires a separate investigation. Also, we obtain a necessary and sufficient condition for the uniqueness of solution for Inverse Problem \ref{ip2} in terms of the completeness for a sequence of two-element vector functions. This condition is different from that in \cite{MH}.

Second, the boundary value problem \eqref{1s}--\eqref{3s} can be represented in the following form:
\begin{equation} \label{1e}
-y''(x) + q_1(x) y(x) = \lambda y(x), \quad 0 < x < d_1,
\end{equation}
\begin{equation} \label{2e}
y'(0) - h y(0) = 0, \quad f_1(\lambda) y'(d_1) + f_2(\lambda) y(d_1) = 0,
\end{equation}
where $f_1(\lambda)$ and $f_2(\lambda)$ are some entire  functions, which are constructed by the known data $q_2(x)$, $H$, $a_1$, $a_2$ (see Remark~\ref{remnew} for details). Thus, Inverse Problem \ref{ip1} is reduced to the recovery of $q_1$ and $h$ from a subspectrum of \eqref{1e}--\eqref{2e}, while the entire functions $f_1(\lambda)$ and $f_2(\lambda)$ are known a priori.  However, Inverse Problem~\ref{ip2} cannot be represented as \eqref{1e}--\eqref{2e}, since it implies different functions $f_1^i(\lambda)$ and $f_2^i(\lambda)$ for the problems $B_i$ with $i = 0$ and $i = 1$.
For the problems of the form  \eqref{1e}--\eqref{2e}, the inverse spectral theory has been created in \cite{BN1,BN2,YBX} and subsequent studies (see the overview \cite{Bond23}). In particular, the case of multiple eigenvalues was considered and local solvability and stability of inverse problems were proved, whereas, when $f_1(\lambda)$ and $f_2(\lambda)$ are fixed. In the current paper, $f_1(\lambda)$ and $f_2(\lambda)$ depend on the given potential  $q_2$ and some parameters in the boundary and jump conditions. As a result, $f_1(\lambda)$ and $f_2(\lambda)$ may not be fixed, since, as mentioned before, the given  $q_2$ and $H$ may contain errors. In this paper, we shall take this fact into account.

It should be mentioned that, for the case of the Dirichlet boundary condition without discontinuity ($a_1 = 1$, $a_2 = 0$, $h = H = \infty$, $a = 1/2$), the local solvability and stability of the inverse problem with  the perturbation of the known potential for the first time was proved in \cite{BB}, during the investigation of the inverse transmission eigenvalue problem.  Nevertheless, the presence of the discontinuity causes additional difficulties and  so requires a separate investigation. Moreover,  the methods used here, especially in dealing with the perturbation of the known potential,  are different from that in \cite{BB}. 

The paper is organized as follows. In Section 2, we derive the main equations and introduce the main results in this paper, including two uniqueness theorems, a reconstruction algorithm, a theorem of the local solvability and stability, and two corollaries. In Section 3, we study the asymptotic behavior of the vector functional sequence in the main equations.
In Section 4, we provide the proofs of the uniqueness theorems for Inverse Problem \ref{ip2}. In Section 5, we prove the theorem for the local solvability and stability of Inverse Problems \ref{ip1} and \ref{ip2}. In particular, we first consider the general case $a\in(0,1)$, namely, Inverse Problem \ref{ip2}, in which the given $q_2$ contains $\varepsilon$-error. Then, we consider the case $a\in(0,1/2]$, namely, Inverse Problem \ref{ip1}, in which the given $H$ and $q_2$ contain $\varepsilon$-error. In Appendix, some auxiliary propositions of complex and functional analyses are provided.

\section{Main results}
In this section, we first derive the main equations for solving Inverse Problems~\ref{ip1} and~\ref{ip2}, and then present the main results of this paper.

Let $\varphi(x,\lambda)$ be the solution of \eqref{1s} for $j=1$ satisfying the initial conditions $\varphi(0,\lambda)=1,\varphi'(0,\lambda)=h$. Let $\psi_0(x,\lambda)$ and $\psi_1(x,\lambda)$ be the solutions of \eqref{1s}  for $j=2$  satisfying the initial conditions
\begin{equation} \label{initpsi}
 \psi_0(0,\lambda)=0,\; \psi_0'(0,\lambda)=1,\quad \psi_1(0,\lambda)=1,\;\psi_1'(0,\lambda)=H,
\end{equation}
respectively.
Then, in view of~\eqref{3s}, the eigenvalues $\{\lambda_{i,n}\}_{n\ge0}$ of the problem $B_i$ ($i=0,1$), respectively, coincide with the zeros of the   characteristic functions
\begin{equation}\label{2.0}
 \Delta_i(\lambda)=\left|
                   \begin{array}{cc}
                    \varphi(a,\lambda) & -a_1^{-1}\psi_i(d_2,\lambda)\\
                    \varphi'(a,\lambda)& a_1\psi_i'(d_2,\lambda) +a_2\psi_i(d_2,\lambda) \\
                   \end{array}
                 \right|:=\left|
                   \begin{array}{cc}
                    \varphi_0(\lambda) & g_{i,0}(\lambda)\\
                    \varphi_1(\lambda)& g_{i,1}(\lambda)\\
                   \end{array}
                 \right|.
\end{equation}
 Let $\lambda=\rho^2$.  It is known \cite{Lev,VM} that $\varphi(x,\lambda)$ has the expression
\begin{equation}\label{2.1}
{\varphi}(x,\lambda)=\cos\rho x+\int_0^x K(x,t)\cos \rho tdt,\quad 0\le x\le a,
\end{equation}
where the kernel $K(x,t)$ is a function of two variables, which has the first partial derivatives $K_x(x,\cdot),K_t(x,\cdot)\in L^2(0,x)$, and
\begin{equation}\label{2.3}
\omega_1=K(a,a)=h+\frac{1}{2}\int_0^a q_1(t)dt.
\end{equation}
The relation \eqref{2.1} together with \eqref{2.3} yield
\begin{equation}\label{2.5}
{\varphi}_0(\lambda)=\cos\rho a+\omega_1 \frac{\sin \rho a}{\rho}-\int_0^a K_1(t)\frac{\sin \rho t}{\rho}dt,
\end{equation}
\begin{equation}\label{2.6}
{\varphi}_1(\lambda)=-\rho\sin\rho a+\omega_1 \cos \rho a+\int_0^a K_2(t)\cos \rho tdt,
\end{equation}
where $K_1(t):=K_t(a,t)$ and  $K_2(t):=K_x(a,t)$. The set $\{K_1(t), K_2(t),\omega_1\}$ is called the Cauchy data for $q_1$ and $h$.

Substituting \eqref{2.5} and \eqref{2.6} into \eqref{2.0}, we get
\begin{align}\label{2.7}
\notag -\Delta_i(\lambda)=& \rho^{-1}g_{i,1}(\lambda)\int_0^a K_1(t)\sin \rho tdt+g_{i,0}(\lambda)\int_0^a K_2(t)\cos \rho tdt \\
&-\left[\cos\rho a+\frac{\omega_1 \sin \rho a}{\rho}\right]g_{i,1}(\lambda)+g_{i,0}(\lambda)\left[\omega_1 \cos \rho a-\rho\sin\rho a\right].
\end{align}
Introduce the Hilbert space of vector-valued functions $\mathcal{H}:=L^2(0,a)\times L^2(0,a)$ with the inner product $\langle\cdot,\cdot\rangle$ defined by
\begin{equation}\label{a11}
  \langle \mathbf{h},\mathbf{ p}\rangle=\int_0^a \overline{h_1(x)}{p_1(x)}+\overline{h_2(x)}{p_2(x)}dx,\quad \forall\mathbf{ h}:=(h_1,h_2),\mathbf{p}:=(p_1,p_2)\in \mathcal{H}.
\end{equation}
Rewrite \eqref{2.7} as
\begin{equation}\label{2.11}
-\Delta_i(\lambda)=\left\langle \mathbf{K}(\cdot),\mathbf{U}_i(\cdot,\lambda)\right\rangle-f_i(\lambda) ,\quad i=0,1,
\end{equation}
where
\begin{equation}\label{2.10}
f_i(\lambda):=\left[\!\cos\rho a+\omega_1 \frac{\sin \rho a}{\rho}\right]g_{i,1}(\lambda)-g_{i,0}(\lambda)\left[\omega_1 \cos \rho a-\rho\sin\rho a\right],
\end{equation}
\begin{equation}\label{2.9}
  \mathbf{U}_i(t,\lambda):=(U_{i,1}(t,\lambda),U_{i,2}(t,\lambda)),\quad \mathbf{K}(t):=(\overline{K_1(t)},\overline{K_2(t)}),
\end{equation}
\begin{equation}\label{2.8}
U_{i,1}(t,\lambda):=g_{i,1}(\lambda)s(t,\lambda),\quad s(t,\lambda):=\frac{\sin \rho t}{\rho},\quad U_{i,2}(t,\lambda):=g_{i,0}(\lambda) c(t,\lambda),\quad c(t,\lambda):=\cos \rho t.
\end{equation}

Recall $\{\mu_{i,k}\}_{k\ge0}=\{\lambda_{i,n}\}_{n\in I_i}$, $i=0,1$. Let $m_{n}^i$ be the multiplicity of the value $\mu_{i,n}$ in the sequence $\{\mu_{i,n}\}_{n\ge0}$. Without loss of generality, assume $\mu_{i,n}=\mu_{i,n+1}=\cdot\cdot\cdot=\mu_{i,n+m_n^i-1}$. Note that $m_{n}^i=1$ for $n\ge n_i$ for some large $n_i$. Consider the set
\begin{equation*}
  \mathcal{S}_i:=\{n\in \mathbb{N}:\mu_{i,n}\ne \mu_{i,n-1},n\ge1\}\cup \{0\} ,\quad i=0,1.
\end{equation*}
It is obvious that the sequence $\{\mu_{i,n}\}_{n\in \mathcal{S}_i}$ consists of elements of $\{\mu_{i,n}\}_{n\ge0}$ being taken only once.
Denote
\begin{equation*}
  f^{\langle \nu\rangle}(\lambda):=\frac{1}{\nu!}\frac{d^\nu f(\lambda)}{d\lambda^\nu},\quad P^{\langle \nu\rangle}(t,\lambda):=\frac{1}{\nu !}\frac{\partial^\nu P(t,\lambda)}{\partial\lambda^\nu}.
\end{equation*}
Define
\begin{equation}\label{xq}
  \mathbf{U}_{n+\nu}^i(t):=\mathbf{U}_i^{\langle \nu\rangle}(t,\mu_{i,n}), \quad n\in \mathcal{S}_i,\quad \nu=\overline{0,m_n^i-1} ,\quad i=0,1,
  \end{equation}
  and
\begin{equation}\label{xq3}
  \tau_{n+\nu}^i:=f_i^{\langle \nu \rangle} (\mu_{i,n}), \quad n\in \mathcal{S}_i,\quad \nu=\overline{0,m_n^i-1} ,\quad i=0,1.
\end{equation}
Together with \eqref{2.11}--\eqref{xq3}, we get the \emph{main equations} of Inverse Problem \ref{ip2}
\begin{equation}\label{xq2}
 \left\langle \mathbf{K}(\cdot),\mathbf{U}_{n}^i(\cdot)\right\rangle=\tau_n^i,\quad n\ge 0,\quad i=0,1.
\end{equation}

Let us formulate the uniqueness results for the solution of Inverse Problem \ref{ip2}. Firstly, using the given data $a_1,$ $a_2$, $H$ and $q_2$, we can uniquely recover the functions $\mathbf{U}_i(t,\lambda)$
($i=0,1$) with the help of \eqref{2.0}, \eqref{2.9}, and \eqref{2.8}. Then, using the given eigenvalues $\{\mu_{i,n}\}_{n\ge0,i=0,1}$, we can construct the system of functions
$\{\mathbf{U}_n^i(t)\}_{n\ge0,i=0,1}$ by \eqref{xq}.

\begin{theorem}[Uniqueness 1]\label{th2}
Assume that Inverse Problem \ref{ip2} is solvable.  Then
the solution of Inverse Problem \ref{ip2} is  unique if  and only if
  the system $\{\mathbf{U}_n^i(t)\}_{n\ge0,i=0,1}$ defined in \eqref{xq} is complete in $\mathcal{H}$.
\end{theorem}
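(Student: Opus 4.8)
The plan is to reduce the uniqueness of Inverse Problem~\ref{ip2} to the uniqueness of the solution $\mathbf{K}\in\mathcal{H}$ of the main equations~\eqref{xq2}, and then to read off the latter from the orthogonal complement of the system $\{\mathbf{U}_n^i\}_{n\ge0,\,i=0,1}$. The first step is the observation that, by \eqref{2.1}--\eqref{2.6}, recovering the pair $(q_1,h)$ is equivalent to recovering the Cauchy data $\{K_1,K_2,\omega_1\}$; since $\omega_1$ is part of the given data, this is in turn equivalent to recovering $\mathbf{K}=(\overline{K_1},\overline{K_2})$. I would use here two facts from the inverse-spectral theory of \cite{BN1,BN2,YB}: (i) the Cauchy data are uniquely determined by $(q_1,h)$ and conversely determine $(q_1,h)$ uniquely (injectivity of the Cauchy-data map), and (ii) every admissible element of $\mathcal{H}$ is realized by some potential (surjectivity of the Cauchy-data map). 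The point that drives everything is that the vector functions $\mathbf{U}_n^i$ and the scalars $\tau_n^i$ depend only on the \emph{shared} input $a_1,a_2,H,\omega_1,q_2$ and on the prescribed eigenvalues, hence are identical for any two solutions.

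For the sufficiency direction I would take two solutions $(q_1,h)$ and $(\tilde q_1,\tilde h)$ with Cauchy data $\mathbf{K}$ and $\tilde{\mathbf{K}}$. Both satisfy \eqref{xq2} with the same right-hand side, so subtraction gives $\langle \mathbf{K}-\tilde{\mathbf{K}},\mathbf{U}_n^i\rangle=0$ for all $n\ge0$, $i=0,1$. If $\{\mathbf{U}_n^i\}$ is complete in $\mathcal{H}$, then $\mathbf{K}=\tilde{\mathbf{K}}$, and since both solutions share $\omega_1$, injectivity of the Cauchy-data map yields $(q_1,h)=(\tilde q_1,\tilde h)$. This direction is routine and uses only injectivity.

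For the necessity I would argue by contraposition and construct a second solution. Non-completeness produces a nonzero $\mathbf{g}\in\mathcal{H}$ with $\langle\mathbf{g},\mathbf{U}_n^i\rangle=0$ for all $n,i$. Let $\mathbf{K}_0$ be the Cauchy data of the given solution and set $\tilde{\mathbf{K}}:=\mathbf{K}_0+\mathbf{g}$, which again solves \eqref{xq2}. By \eqref{2.11}, the characteristic function of the perturbed problem satisfies $\tilde\Delta_i(\lambda)=\Delta_i(\lambda)-\langle\mathbf{g},\mathbf{U}_i(\cdot,\lambda)\rangle$; the orthogonality relations, read through the definition \eqref{xq}, show that $\langle\mathbf{g},\mathbf{U}_i(\cdot,\lambda)\rangle$ vanishes at every $\mu_{i,n}$ to the required order $m_n^i$. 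Hence $\tilde\Delta_i$ reproduces all the prescribed eigenvalues with their multiplicities, so $\tilde{\mathbf{K}}$, \emph{provided it is admissible}, yields a genuine second solution with identical input data, contradicting uniqueness.

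The main obstacle is exactly this admissibility: one must guarantee that $\tilde{\mathbf{K}}=\mathbf{K}_0+\mathbf{g}$ is the Cauchy data of an actual potential $\tilde q_1\in L^2(0,a)$ sharing the same $\omega_1$. Here I would invoke the surjectivity of the Cauchy-data map onto $\mathcal{H}$. I note that the naive degrees-of-freedom count --- $q_1$ being a single function while $(K_1,K_2)$ is a pair --- is misleading, since $L^2(0,a)$ and $\mathcal{H}=L^2(0,a)\times L^2(0,a)$ are isomorphic Hilbert spaces and the range of the map is in fact all of $\mathcal{H}$; this is the content of the corresponding characterization in \cite{BN1,BN2,YB}. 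Once this realization result is secured, the contraposition closes and the equivalence in Theorem~\ref{th2} follows.
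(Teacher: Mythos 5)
Your sufficiency argument is sound and coincides with the paper's: both solutions satisfy the main equations \eqref{xq2} with identical right-hand sides, completeness forces the Cauchy data to coincide, and injectivity of the map $(q_1,h)\mapsto\{K_1,K_2,\omega_1\}$ finishes. Your necessity argument also has the same skeleton as the paper's (perturb the Cauchy data by an element $\mathbf{g}$ orthogonal to all $\mathbf{U}_n^i$, then check that the perturbed characteristic functions $\tilde\Delta_i$ vanish at each $\mu_{i,n}$ to order $m_n^i$), and you correctly isolated the crux --- admissibility of $\mathbf{K}_0+\mathbf{g}$ --- but you resolve it with a claim that is false. The Cauchy-data map is \emph{not} surjective onto $\mathcal{H}$, and none of \cite{BN1,BN2,YB} asserts that it is. Here is a concrete obstruction: if $(K_1,K_2)$ lies in the range, then the functions $\varphi_0(\lambda)$, $\varphi_1(\lambda)$ built from them by \eqref{2.5}--\eqref{2.6} are the boundary values $\varphi(a,\lambda)$, $\varphi'(a,\lambda)$ of a solution of the ODE with $\varphi(0,\lambda)=1$, so they can never have a common zero (a common zero would force $\varphi(\cdot,\lambda_0)\equiv 0$ by uniqueness for the Cauchy problem). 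Yet starting from genuine Cauchy data one can pick a zero $\nu_0$ of $\varphi_0$ and replace $K_2$ by $K_2+g$ with $\int_0^a g(t)\cos(\sqrt{\nu_0}\,t)\,dt=-\varphi_1(\nu_0)$; the new pair produces $\varphi_0$ and $\tilde\varphi_1$ with the common zero $\nu_0$ and hence lies outside the range. So the isomorphism $L^2\cong L^2\times L^2$ notwithstanding, your "realization" step has no theorem behind it, and the contraposition does not close.

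The paper circumvents exactly this difficulty with two ingredients you did not use. First, the orthogonality conditions $\langle\mathbf{g},\mathbf{U}_n^i\rangle=0$ are homogeneous in $\mathbf{g}$, so $\mathbf{g}$ may be rescaled to have $\|\mathbf{g}\|_{\mathcal{H}}\le\varepsilon$ with $\varepsilon$ as small as desired. Second, Proposition \ref{thca} (the \emph{local} solvability and stability of the inverse problem by Cauchy data, taken from \cite{XB1}) guarantees that any pair within $\varepsilon$ of genuine Cauchy data $\{K_1,K_2\}$ is itself realized by a unique $\tilde q_1\in L^2(0,a)$ and $\tilde h=\omega_1-\frac{1}{2}\int_0^a\tilde q_1(x)\,dx$, with the same $\omega_1$. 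With $\tilde{\mathbf{K}}=\mathbf{K}_0+\mathbf{g}$ admissible in this local sense, the rest of your computation (that $\tilde\Delta_i(\lambda)=\tilde\varphi_0(\lambda)g_{i,1}(\lambda)-\tilde\varphi_1(\lambda)g_{i,0}(\lambda)$ has the prescribed zeros with the prescribed multiplicities) goes through verbatim and produces the second solution. So the missing idea is precisely the passage from a global surjectivity claim, which fails, to a scaling argument combined with a local realization theorem.
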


In Theorem \ref{th2}, the condition depends on the system of functions $\{\mathbf{U}_n^i(t)\}_{n\ge0,i=0,1}$ which, visually, relies on not only the subspectra but also the data $a_1,a_2,H$ and $q_2$. In the following uniqueness theorem, the condition, visually, only  depends on the  subspectra.
Denote
\begin{equation}\label{2.26}
 c_{n+\nu}^i(t):=c^{\langle\nu\rangle}(t,\mu_{i,n})=\left.\frac{1}{\nu !}\frac{\partial^\nu\cos \rho t}{\partial\lambda^\nu}\right|_{\lambda=\mu_{i,n}}, \quad n\in \mathcal{S}_i,\quad \nu=\overline{0,m_n^i-1} ,\quad i=0,1.
\end{equation}

\begin{theorem}[Uniqueness 2]\label{th2s}
Assume that Inverse Problem \ref{ip2} is solvable.  The solution of Inverse Problem \ref{ip2} is unique  if the system $\{c_n^i(t)\}_{n\ge0,i=0,1} $ defined in \eqref{2.26} is complete in $L^2(0,2a)$.
\end{theorem}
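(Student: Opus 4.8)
The plan is to deduce Theorem \ref{th2s} from Theorem \ref{th2} by showing that completeness of $\{c_n^i(t)\}_{n\ge0,i=0,1}$ in $L^2(0,2a)$ forces completeness of the vector system $\{\mathbf{U}_n^i(t)\}_{n\ge0,i=0,1}$ in $\mathcal{H}$. So I would take an arbitrary $\mathbf{h}=(h_1,h_2)\in\mathcal{H}$ with $\langle\mathbf{h},\mathbf{U}_n^i\rangle=0$ for all $n\ge0$, $i=0,1$, and aim to prove $\mathbf{h}=0$. Setting $A(\lambda):=\int_0^a\overline{h_1(t)}\,\frac{\sin\rho t}{\rho}\,dt$ and $B(\lambda):=\int_0^a\overline{h_2(t)}\cos\rho t\,dt$, the entire function $\Phi_i(\lambda):=\langle\mathbf{h},\mathbf{U}_i(\cdot,\lambda)\rangle=g_{i,1}(\lambda)A(\lambda)+g_{i,0}(\lambda)B(\lambda)$ satisfies $\Phi_i^{\langle\nu\rangle}(\mu_{i,n})=\langle\mathbf{h},\mathbf{U}_{n+\nu}^i\rangle=0$; that is, $\Phi_i$ vanishes at each $\mu_{i,n}$ to order at least $m_n^i$.

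The core of the argument is the pair of algebraic identities
\[
g_{i,1}(\lambda)G(\lambda)=\varphi_1(\lambda)\Phi_i(\lambda)+B(\lambda)\Delta_i(\lambda),\qquad g_{i,0}(\lambda)G(\lambda)=\varphi_0(\lambda)\Phi_i(\lambda)-A(\lambda)\Delta_i(\lambda),
\]
where $G(\lambda):=\varphi_0(\lambda)B(\lambda)+\varphi_1(\lambda)A(\lambda)$; both follow at once from $\Delta_i=\varphi_0 g_{i,1}-\varphi_1 g_{i,0}$ in \eqref{2.0} and the definition of $\Phi_i$. Since the $\mu_{i,n}$ are eigenvalues of $B_i$ (here the assumed solvability is used), $\Delta_i$ vanishes at $\mu_{i,n}$ to order at least $m_n^i$; combined with the zeros of $\Phi_i$, this shows that both $g_{i,1}G$ and $g_{i,0}G$ vanish to order $\ge m_n^i$ there. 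Because $g_{i,0}$ and $g_{i,1}$ cannot vanish simultaneously (a common zero would force $\psi_i(d_2,\cdot)=\psi_i'(d_2,\cdot)=0$, hence $\psi_i\equiv0$), I conclude that $G$ itself vanishes at every $\mu_{i,n}$ to order at least $m_n^i$, for both $i=0,1$.

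Next I would identify $G$ as a cosine transform on $(0,2a)$. Writing $A(\lambda)=\int_0^a\tilde h_1(s)\cos\rho s\,ds$ with $\tilde h_1(s)=\int_s^a\overline{h_1(t)}\,dt$ (so $\tilde h_1(a)=0$), both $A$ and $B$ become cosine transforms of $L^2(0,a)$ functions, while $\varphi_0,\varphi_1$ are given by \eqref{2.5}--\eqref{2.6}. Expanding the products by the product-to-sum formulas, every term of $G=\varphi_0 B+\varphi_1 A$ is a cosine transform on $(0,2a)$; the only delicate term, $-\rho\sin\rho a\cdot A=-\sin\rho a\int_0^a\overline{h_1(s)}\sin\rho s\,ds$, is handled through $\sin\rho a\sin\rho s=\tfrac12[\cos\rho(a-s)-\cos\rho(a+s)]$ and produces no boundary (delta-type) contribution, so that $G(\lambda)=\int_0^{2a}\overline{w(t)}\cos\rho t\,dt$ for some $w\in L^2(0,2a)$. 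Then $\langle w,c_n^i\rangle_{L^2(0,2a)}=G^{\langle\nu\rangle}(\mu_{i,n})=0$ for all $n,i$, and completeness of $\{c_n^i\}$ in $L^2(0,2a)$ yields $w=0$, i.e. $G\equiv0$.

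It remains to pass from $G=\varphi_0 B+\varphi_1 A\equiv0$ to $A\equiv B\equiv0$. Since $\varphi_0,\varphi_1$ have no common zeros and $\varphi_0 B=-\varphi_1 A$, every zero of $\varphi_0$ is a zero of $A$ of at least the same multiplicity, so $R:=A/\varphi_0$ is entire and $B=-\varphi_1 R$. Using the standard lower bound $|\varphi_0(\rho^2)|\ge C e^{|\mathrm{Im}\,\rho|a}/(1+|\rho|)$ away from small disks about its zeros, together with $|A(\rho^2)|\le C e^{|\mathrm{Im}\,\rho|a}/(1+|\rho|)$, the quotient $R$ is bounded on $\mathbb{C}$ and hence constant by Liouville's theorem; since $A(\rho^2)\to0$ while $\varphi_0(\rho^2)\not\to0$ as $\rho\to+\infty$, that constant is $0$, giving $A\equiv0$ and then $B\equiv0$. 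Consequently $h_1=h_2=0$, so $\mathbf{h}=0$, the system $\{\mathbf{U}_n^i\}$ is complete in $\mathcal{H}$, and Theorem \ref{th2} completes the proof. I expect the main obstacle to be the careful verification that $G$ carries no delta-type boundary term at $t=2a$ and is genuinely an $L^2(0,2a)$ cosine transform, together with the growth estimates underlying the final Liouville argument, both of which rely on the auxiliary propositions collected in the Appendix.
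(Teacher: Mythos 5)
Your proposal is correct, and it reaches the goal by the same overall skeleton as the paper --- namely, reducing Theorem \ref{th2s} to Theorem \ref{th2} by showing that completeness of $\{c_n^i\}$ forces completeness of $\{\mathbf{U}_n^i\}$, with all the work funneled through the entire function $G(\lambda)=\varphi_1(\lambda)A(\lambda)+\varphi_0(\lambda)B(\lambda)$, which is exactly the paper's $F(\lambda)=\langle\mathbf{h},\mathbf{V}(\cdot,\lambda)\rangle$ from \eqref{2.17}. The implementations of the two key steps, however, are genuinely different. First, to pass from $\Phi_i^{\langle\nu\rangle}(\mu_{i,n})=0$ to the vanishing of $G$ at $\mu_{i,n}$ with multiplicity, the paper proves Lemma \ref{l2} via Proposition \ref{A1}, an inductive relation between the Taylor coefficients of $\varphi_j$ and $g_{i,j}$ at the common zeros of $\Delta_i$; you instead use the two directly verifiable identities $g_{i,1}G=\varphi_1\Phi_i+B\Delta_i$ and $g_{i,0}G=\varphi_0\Phi_i-A\Delta_i$ together with the fact that $g_{i,0},g_{i,1}$ have no common zeros. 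This is more elementary and self-contained, at the cost of yielding only the implication actually needed here (the paper's Lemma \ref{l2} is a full equivalence, which is reused later in the proof of Lemma \ref{l4} and hence in the stability theory). Second, after $G\equiv0$, the paper invokes Proposition \ref{A2}, whose proof rests on Riesz-basis/completeness properties of perturbed cosine systems attached to the zeros of $\varphi_1$; you instead run a Liouville argument on $R=A/\varphi_0$ (entirety from the no-common-zeros property, boundedness from the standard lower bound on $|\varphi_0|$ off small disks plus the maximum principle, and vanishing of the constant along a real sequence where $\varphi_0$ stays away from zero). The intermediate step --- exhibiting $G$ as a cosine transform of an $L^2(0,2a)$ function --- is done in the paper abstractly via asymptotics and the Paley--Wiener theorem, and by you via explicit product-to-sum expansions; these are equivalent in substance, and your observation that the term $-\rho\sin\rho a\cdot A$ loses its factor of $\rho$ against the $1/\rho$ in $A$ and so contributes no boundary term is precisely the point that makes the explicit computation work. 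In short: your route trades the paper's reusable lemma infrastructure (Lemmas \ref{l2}--\ref{l3}, Propositions \ref{A1}--\ref{A2}) for a shorter, more classical complex-analytic argument tailored to this one theorem.
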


\begin{remark}
Theorem \ref{th2} gives the necessary and sufficient condition for the uniqueness of the solution of Inverse Problem \ref{ip2}. Theorem \ref{th2s} only gives  the sufficient condition.
From Lemmas \ref{l2} and \ref{l3} in Section 4, we know that
the condition in Theorem \ref{th2s} implies the condition in Theorem \ref{th2}.  However, the condition in Theorem \ref{th2s} seems easier to verify in some special cases (see the proof of Corollary \ref{cor5.1}).
In general, if $\{\mathbf{U}_n^i(t)\}_{n\ge0,i=0,1}$ is complete in $\mathcal{H}$,
 it not clear whether $\{c_n^i(t)\}_{n\ge0,i=0,1}$ is complete in $L^2(0,2a)$.
\end{remark}

\begin{remark}\label{remnew}
The eigenvalues $\{\lambda_n\}_{n\ge1}$ and coefficients $\cos \alpha_n$ and $\sin \alpha_n$ in \eqref{Horvath}, and the entire functions $f_1(\lambda)$ and $f_2(\lambda)$ in \eqref{2e} can be constructed  from the given data in Inverse Problems \ref{ip2} and \ref{ip1}, respectively. Indeed,
in \eqref{Horvath}, $\{\lambda_n\}_{n\ge1}=\{\mu_{0,k},\mu_{1,k}\}_{k\ge0}$ and $\{\alpha_n\}_{n\ge1}=\{\alpha_{0,k},\alpha_{1,k}\}_{k\ge0}$, where $\alpha_{i,k}$ ($i=0,1$) satisfy
\begin{equation*}
  \cos\alpha_{i,k}=\frac{g_{i,1}(\mu_{i,k})}{\sqrt{g_{i,1}(\mu_{i,k})^2+g_{i,0}(\mu_{i,k})^2}},\quad \sin\alpha_{i,k}=\frac{-g_{i,0}(\mu_{i,k})}{\sqrt{g_{i,1}(\mu_{i,k})^2+g_{i,0}(\mu_{i,k})^2}};
\end{equation*}
and in \eqref{2e}, \begin{equation*}
  f_1(\lambda)=g_{1,1}(\lambda),\quad f_2(\lambda)=-g_{1,0}(\lambda).
\end{equation*}
where the entire functions $g_{i,j}(\lambda)$ are defined in \eqref{2.0}. In view of this  observation, for the real-valued potential $q_1$ and simple eigenvalues, Theorem \ref{th2s} can be viewed as a variant of Theorem 1.2 in \cite{MH}. Furthermore, for Inverse Problem~\ref{ip1}, Theorem~\ref{th2s} can be obtained from
Theorem 2.1 in \cite{YBX}. However, for Inverse Problem~\ref{ip2} in the non-self-adjoint case with possible eigenvalue multiplicities, Theorem~\ref{th2s} does not follow from previously known results.
\end{remark}

\begin{remark}\label{remark2}
The above results can be easily generalized to the partial inverse problems from parts of $N+1$ subspectra, where $N\ge1$. Indeed, consider the problems $B_i=B(d_1,d_2,q_1,q_2,h,H_i ,a_1,a_2)$ $(i=\overline{0,N})$ with $H_0=\infty,H_1=H$ and $H_l\ne H_j$ for $l\ne j$. For $i = \overline{0,N}$, let $\{\mu_{i,n}\}_{n\ge0}$ be a subspectrum of the corresponding problem $B_i$. Similarly to the definitions of $\mathbf{U}_n^i(t)$ and $c_n^i(t)$ for $i=0,1$, we can also define $\mathbf{U}_n^i(t)$ and $c_n^i(t)$ for $i=\overline{2,N}$ by $\{\mu_{i,n}\}_{n\ge0,i=\overline{2,N}}$. Then we have the generalized result:  $q_1$ and $h$ are uniquely determined by $a_1$, $a_2$, $q_2$, $\omega_1$, $H_i$ and $\{\mu_{i,n}\}_{n\ge0}$,  $i=\overline{0,N}$, if and only if $\{\mathbf{U}_n^i(t)\}_{n\ge0,i=\overline{0,N}}$ is complete in $\mathcal{H}$ (or if $\{c_n^i(t)\}_{n\ge0,i=\overline{0,N}}$ is complete in $L^2(0,2a)$).
\end{remark}

The next result is the  algorithm for recovering the solution of Inverse Problem \ref{ip2}. It is known that, in the self-adjoint case, one can use the Cauchy data $\{K_1(t), K_2(t),\omega_1\}$ to recover $q_1$ and $h$ directly (see \cite{RS}). In the non-self-adjoint case, one can first use the Cauchy data to recover the functions $\varphi_0(\lambda)$ and $\varphi_1(\lambda)$ defined in \eqref{2.5} and \eqref{2.6}, and then recover the complex-valued potential $q_1$ and $h$
by the method of spectral mapping (see, e.g., \cite{B,FY}). Therefore, we only need to recover the Cauchy data.
 The solution of Inverse Problem \ref{ip2} can be found by the following algorithm under the assumption that $\{\mathbf{U}_n^{i}(t)\}_{n\ge0,i=0,1}$ defined in \eqref{xq} is a basis in $\mathcal{H}$.

\begin{algorithm}\label{alg:1}
Let $\{\mu_{i,n}\}_{n\ge0}\; (i=0,1)$ be the given subspectra  of the corresponding problems $B_i$. We have to find $q_1$ and $h$ from $a_1,$ $a_2$, $H$, $\omega_1$, and $q_2$.
\begin{enumerate}
\item Find the solutions $\psi_i(x, \lambda)$ ($i = 0, 1$) of equation \eqref{1s} with $j = 2$ under the initial conditions \eqref{initpsi} and then determine the functions $g_{i,j}$ by \eqref{2.0} from $q_2, H, a_1$, and $a_2$, where $i=0,1$ and $j=0,1$ .
\item Construct the basis $\{\mathbf{U}_n^i\}_{n\ge0}\; (i=0,1)$  by \eqref{2.9}, \eqref{2.8}, and \eqref{xq}.
\item Construct $\{\tau_n^i\}_{n\ge0}\; (i=0,1)$ by \eqref{2.10} and \eqref{xq3}.
\item Determine $\mathbf{K}=(K_1,K_2)\in \mathcal{H}$ by the following formula
\begin{equation*}
\mathbf{K}(t)= \sum_{n\ge0}\overline{\tau_n^0} \mathbf{U}_n^{0*}(t)+\sum_{n\ge0}\overline{\tau_n^1} \mathbf{U}_n^{1*}(t),
\end{equation*}
where $\{\mathbf{U}_n^{0*}(t)\}_{n\ge0}\cup\{\mathbf{U}_n^{1*}(t)\}_{n\ge0}$ is the basis, biorthonormal to  $\{\mathbf{U}_n^{0}(t)\}_{n\ge0}\cup\{\mathbf{U}_n^{1}(t)\}_{n\ge0}$.
 \item Construct the functions $\varphi_0(\lambda)$ and $\varphi_1(\lambda)$ by \eqref{2.5} and \eqref{2.6}.
\item Recover the potential $q_1$ and $h$ from  $\varphi_0(\lambda)$ and $\varphi_1(\lambda)$ by using the method of spectral mappings (see \cite{B,FY}).
\end{enumerate}
\end{algorithm}

The next result is a theorem on  the local solvability and stability of Inverse Problem \ref{ip2}. This theorem takes the error caused by the given partial potential into account.

\begin{theorem}\label{th4.1}
For $i = 0, 1$, let $\{\lambda_{i,n}\}_{n\in I_i}(=\{\mu_{i,k}\}_{k\ge0})$ be a subspectrum  of the corresponding problem $B_i=B(d_1,d_2,q_1,q_2,h,H_i ,a_1,a_2)$ with the complex-valued potentials $q_j\in L^2(0,d_j),h,H_1,$ $a_2\in \mathbb{C},a_1>0$ and $H_0=\infty$.  Suppose that the system of functions $\{\sqrt{(|\mu_{0,n}|+1)}\mathbf{U}_n^0(t)\}_{n\ge0}\cup \{\mathbf{U}_n^1(t)\}_{n\ge0}$  constructed in \eqref{xq} is a Riesz basis in $\mathcal{H}$. Then there exists $\varepsilon>0 $ (depending on the problem $B_1$) such that, for arbitrary sequences $\{\tilde{\lambda}_{i,n}\}_{n\in I_i}$ and any function $\tilde{q}_2\in L^2(0,d_2)$ satisfying
  \begin{equation}\label{4.1}
 \Lambda:= \sqrt{\sum_{n\in I_0}|{\lambda}_{0,n}-\tilde{\lambda}_{0,n}|^2+\sum_{n\in I_1}|{\lambda}_{1,n}-\tilde{\lambda}_{1,n}|^2}\le\varepsilon,
  \end{equation}
  and
  \begin{equation}\label{4.0}
\int_0^{d_2}\tilde{q}_2(t)dt=\int_0^{d_2}q_2(t)dt,\quad Q:=\|\tilde{q}_2-q_2\|_{L^2(0,d_2)}\le \varepsilon,
\end{equation}
respectively,
there exist  unique $\tilde{q}_1\in L^2(0,d_1)$ and $\tilde{h}\in \mathbb{C}$ such that  $\tilde{h}+\frac{1}{2}\int_0^{d_1}\tilde{q}_1(x)dx=\omega_1$ and, for $i = 0, 1$, $\{\tilde{\lambda}_{i,n}\}_{n \in I_i}$ is a subspectrum  of the problem $\tilde{B}_i:=B(d_1,d_2,\tilde{q}_1,\tilde{q}_2,\tilde{h},H_i ,a_1,a_2)$. Moreover,
 \begin{equation}\label{4.2}
   \|\tilde{q}_1-q_1\|_{L^2(0,d_1)} \le C (\Lambda+Q),\quad |\tilde{h}-h|\le C (\Lambda+Q),
 \end{equation}
 where $C>0$ depends only on the problem $B_1$.
\end{theorem}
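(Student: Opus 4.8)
The plan is to solve Inverse Problem \ref{ip2} for the perturbed data through the \emph{main equations} \eqref{xq2}, treating the Cauchy data $\mathbf{K}=(\overline{K_1},\overline{K_2})$ as the unknown in the Hilbert space $\mathcal{H}$. For the unperturbed problem $B_1$ (together with $B_0$) the data $\mathbf{K}$ satisfies $\langle\mathbf{K},\mathbf{U}_n^i\rangle=\tau_n^i$, and by hypothesis the weighted system $\{\sqrt{|\mu_{0,n}|+1}\,\mathbf{U}_n^0\}_{n\ge0}\cup\{\mathbf{U}_n^1\}_{n\ge0}$ is a Riesz basis in $\mathcal{H}$; hence the map sending $\mathbf{K}$ to the correspondingly weighted moment sequence $\{\tau_n^i\}$ is a bounded isomorphism onto the appropriate weighted $\ell^2$ space, and the main equation is uniquely solvable. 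First I would build, from the perturbed data $\{\tilde\lambda_{i,n}\}$ and $\tilde q_2$, the perturbed quantities $\tilde\psi_i,\tilde g_{i,j},\tilde{\mathbf{U}}_i(\cdot,\lambda),\tilde f_i(\lambda)$ by \eqref{2.0}, \eqref{2.8}, \eqref{2.9}, \eqref{2.10}, and the perturbed system $\{\tilde{\mathbf{U}}_n^i\}$ and moments $\{\tilde\tau_n^i\}$ by \eqref{xq}, \eqref{xq3}, so that the perturbed main equation $\langle\tilde{\mathbf{K}},\tilde{\mathbf{U}}_n^i\rangle=\tilde\tau_n^i$ encodes the problem $\tilde B_i$.

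The heart of the argument is a pair of perturbation estimates. Using the asymptotics of Section 3, I would show that
$$\sum_{n\ge0}(|\mu_{0,n}|+1)\,\|\tilde{\mathbf{U}}_n^0-\mathbf{U}_n^0\|^2+\sum_{n\ge0}\|\tilde{\mathbf{U}}_n^1-\mathbf{U}_n^1\|^2\le C(\Lambda+Q)^2,$$
and likewise that the weighted $\ell^2$ distance between $\{\tilde\tau_n^i\}$ and $\{\tau_n^i\}$ is $\le C(\Lambda+Q)$. Here the $\Lambda$-part comes from the displacement of the evaluation nodes $\tilde\mu_{i,n}$ against $\mu_{i,n}$ (together with the matching of multiplicities, which forces one to control the $\lambda$-derivatives $\mathbf{U}_i^{\langle\nu\rangle}$ at multiple nodes), while the $Q$-part comes from the dependence of $g_{i,j}$ on $\tilde q_2$ through $\tilde\psi_i$; the mean condition \eqref{4.0} is exactly what keeps the leading asymptotic terms of $g_{i,j}$ (and hence of $\mathbf{U}_n^i,\tau_n^i$) unshifted, so that the remaining perturbation is $L^2$-summable and controlled by $\|\tilde q_2-q_2\|_{L^2}$. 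Granting these estimates, a Bari-type stability theorem for Riesz bases (to be taken from the Appendix) shows that for $\varepsilon$ small enough the weighted perturbed system $\{\sqrt{|\tilde\mu_{0,n}|+1}\,\tilde{\mathbf{U}}_n^0\}\cup\{\tilde{\mathbf{U}}_n^1\}$ is again a Riesz basis; consequently the perturbed main equation has a unique solution $\tilde{\mathbf{K}}\in\mathcal{H}$, and comparing the two solutions through the biorthonormal bases yields $\|\tilde{\mathbf{K}}-\mathbf{K}\|_{\mathcal{H}}\le C(\Lambda+Q)$.

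Next I would pass from $\tilde{\mathbf{K}}=(\tilde K_1,\tilde K_2)$ back to the potential. Using the fixed value $\omega_1$, I form $\tilde\varphi_0(\lambda),\tilde\varphi_1(\lambda)$ by \eqref{2.5}--\eqref{2.6} and recover $\tilde q_1,\tilde h$ by the method of spectral mappings \cite{B,FY,YBX}, whose local Lipschitz continuity gives $\|\tilde q_1-q_1\|_{L^2(0,d_1)}\le C\|\tilde{\mathbf{K}}-\mathbf{K}\|_{\mathcal{H}}$ and hence the first estimate in \eqref{4.2}. The constraint $\tilde h+\tfrac12\int_0^{d_1}\tilde q_1=\omega_1$ holds automatically, since it is nothing but relation \eqref{2.3} built into the reconstruction; because $\omega_1$ carries no error, subtracting the two constraints gives $\tilde h-h=-\tfrac12\int_0^{d_1}(\tilde q_1-q_1)$, whence $|\tilde h-h|\le C\|\tilde q_1-q_1\|_{L^2(0,d_1)}\le C(\Lambda+Q)$, the second estimate in \eqref{4.2}. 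Finally, to confirm genuine solvability, I would check that the reconstructed problem $\tilde B_i$ has $\{\tilde\mu_{i,n}\}$ as a subspectrum: evaluating the identity \eqref{2.11} for $\tilde B_i$ at $\tilde\mu_{i,n}$ and its $\lambda$-derivatives, the perturbed main equation forces $\tilde\Delta_i^{\langle\nu\rangle}(\tilde\mu_{i,n})=0$ for $\nu=\overline{0,m_n^i-1}$, so the prescribed values are eigenvalues of $\tilde B_i$ with at least the required multiplicities.

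I expect the main obstacle to be the uniform control of the $q_2$-perturbation in the weighted norm, that is, the $Q$-part of the estimate in the second paragraph, because $\tilde q_2$ enters nonlinearly through the solutions $\tilde\psi_i$ and must be estimated simultaneously at all nodes $\tilde\mu_{i,n}$ and, at multiple nodes, in their $\lambda$-derivatives, while keeping the estimate summable against the weights $|\mu_{0,n}|+1$. Verifying that the mean condition \eqref{4.0} suffices to cancel the otherwise non-summable leading contribution, uniformly in $n$, is the delicate technical point on which the whole perturbation scheme rests.
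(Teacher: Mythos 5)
Your overall scheme---perturbed main equations, a Riesz-basis stability argument producing $\tilde{\mathbf{K}}$ with $\|\tilde{\mathbf{K}}-\mathbf{K}\|_{\mathcal{H}}\le C(\Lambda+Q)$, recovery of $(\tilde q_1,\tilde h)$ from the Cauchy data $\{\tilde K_1,\tilde K_2,\omega_1\}$, and the final check that $\tilde\Delta_i^{\langle\nu\rangle}(\tilde\mu_{i,n})=0$ so that $\{\tilde\mu_{i,n}\}$ is a subspectrum of $\tilde B_i$---coincides with the paper's, and your identification of the role of the mean condition \eqref{4.0} (it cancels the boundary term in the transformation-operator representation, so the $q_2$-perturbation of $g_{i,j}$ decays one order faster in $\rho$; cf.\ Lemma \ref{l4.1}) is correct. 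But the central estimate of your second paragraph has a genuine gap.

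You compare $\tilde{\mathbf{U}}_n^i$, defined by applying \eqref{xq} to the perturbed data, directly with $\mathbf{U}_n^i$, and claim the weighted $\ell^2$ distance is $\le C(\Lambda+Q)$. This is false whenever a multiple eigenvalue splits, and nothing in the hypotheses prevents splitting: the theorem admits arbitrary sequences $\{\tilde\lambda_{i,n}\}$ subject only to \eqref{4.1}, so your parenthetical ``matching of multiplicities'' cannot be assumed. Concretely, if $\mu_{i,k}$ has multiplicity $m_k^i=2$ and $\tilde\mu_{i,k}\ne\tilde\mu_{i,k+1}$ are simple, then $\tilde{\mathbf{U}}_{k+1}^i=\tilde{\mathbf{U}}_i(\cdot,\tilde\mu_{i,k+1})$ is a plain evaluation while $\mathbf{U}_{k+1}^i=\mathbf{U}_i^{\langle1\rangle}(\cdot,\mu_{i,k})$ is a derivative, so $\|\tilde{\mathbf{U}}_{k+1}^i-\mathbf{U}_{k+1}^i\|_{\mathcal{H}}$ does not tend to $0$ as $\Lambda,Q\to0$. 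Worse, near such a split cluster the vectors $\tilde{\mathbf{U}}_i(\cdot,\tilde\mu_{i,k})$ and $\tilde{\mathbf{U}}_i(\cdot,\tilde\mu_{i,k+1})$ are nearly parallel, so the Riesz bounds of the perturbed system degenerate as the splitting shrinks, and a Bari-type theorem cannot be applied to that system directly. The paper's remedy is exactly the missing idea: for each low-index cluster it replaces the conditions at the perturbed nodes by the \emph{equivalent} linear conditions obtained through Hermite interpolation polynomials $\tilde E_{k,i,j}$, $\tilde F_{k,i}$ of degree $\le m_k^i-1$, re-evaluated with derivatives at the single unperturbed node $\mu_{i,k}$ (the passage from \eqref{4.11} to \eqref{4.17} via \eqref{4.16}); the Marletta--Weikard interpolation estimate (Proposition \ref{lA1}) then yields the $O(\Lambda+Q)$ bound for the modified system $\{\tilde{\tilde{\mathbf{U}}}_n^i\}$, $\{\tilde{\tilde\tau}_n^i\}$ (Lemma \ref{l4.3}), and Proposition \ref{pA5} is applied to this modified system, whose solvability is then transferred back to \eqref{4.11} by equivalence. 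Without this device (or a substitute such as divided differences), your perturbation estimate, and with it the whole second paragraph, fails---this is precisely the difficulty caused by multiple eigenvalues that the paper emphasizes as the main obstacle in the non-self-adjoint setting.
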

\begin{remark}
From Lemma \ref{l4} in Section 4, we know that, in Theorem \ref{th4.1}, the condition that the system of functions $\{\sqrt{(|\mu_{0,n}|+1)}\mathbf{U}_n^0(t)\}_{n\ge0}\cup \{\mathbf{U}_n^1(t)\}_{n\ge0}$  constructed in \eqref{xq} is a Riesz basis in $\mathcal{H}$, can be replaced by the stronger but easier to verify condition that the system $\{c_n^i(t)\}_{n\ge0,i=0,1} $ defined in \eqref{2.26} is a Riesz basis in $L^2(0,2a)$.
\end{remark}

As a corollary, we also give the local solvability and stability result for Inverse Problem \ref{ip1}, in which the error caused by $q_2$ and $H$ is taken into account.

\begin{corollary}\label{th5.1}
Assume that $a\in(0,1/2]$. Let $\{\lambda_{1,n}\}_{n\in I}(=\{\mu_{1,k}\}_{k\ge0})$ be a subspectrum  of the problem $B_1=B(d_1,d_2,q_1,q_2,h,H ,a_1,a_2)$ with complex-valued potentials $q_j\in L^2(0,d_j),h,H,$ $a_2\in \mathbb{C}$ and $a_1>0$. Suppose that the system of functions $ \{\mathbf{U}_n^1(t)\}_{n\ge0}$  constructed in \eqref{xq} is a Riesz basis in $\mathcal{H}$.
Then, there exists $\varepsilon>0 $ (depending on the problem $B_1$) such that, for arbitrary sequence $\{\tilde{\lambda}_{1,n}\}_{n\in I}$, any  $\tilde{q}_2\in L^2(0,d_2)$ and $\tilde{H}\in \mathbb{C}$ satisfying
  \begin{equation}\label{5.1}
 \Lambda_1:= \sqrt{\sum_{n \in I}|{\lambda}_{1,n}-\tilde{\lambda}_{1,n}|^2}\le\varepsilon,
  \end{equation}
  and
  \begin{equation}\label{5.0}
\tilde{H}+\frac{1}{2}\int_0^{d_2}\tilde{q}_2(t)dt=H+\frac{1}{2}\int_0^{d_2}q_2(t)dt,\quad Q_1:=|\tilde{H}-H|+\|\tilde{q}_2-q_2\|_{L^2(0,d_2)}\le \varepsilon,
\end{equation}
respectively,
there exist  unique $\tilde{q}_1\in L^2(0,d_1)$ and $\tilde{h}\in \mathbb{C}$ such that  $\tilde{h}+\frac{1}{2}\int_0^{d_1}\tilde{q}_1(x)dx=\omega_1$, and $\{\tilde{\lambda}_{1,n}\}_{n\in I}$ is a  subspectrum  of the problem $\tilde{B}_1:=B(d_1,d_2,\tilde{q}_1,\tilde{q}_2,\tilde{h},\tilde{H} ,a_1,a_2)$. Moreover,
 \begin{equation}\label{5.2}
   \|\tilde{q}_1-q_1\|_{L^2(0,d_1)} \le C (\Lambda_1+Q_1),\quad |\tilde{h}-h|\le C (\Lambda_1+Q_1),
 \end{equation}
 where $C>0$ depends only on the problem $B_1$.
\end{corollary}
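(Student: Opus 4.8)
The plan is to obtain Corollary \ref{th5.1} as the specialization of Theorem \ref{th4.1} to the empty index set $I_0=\emptyset$, supplemented by one genuinely new ingredient: the perturbation of the boundary parameter $H$. Taking $I_0=\emptyset$ makes the family $\{\sqrt{|\mu_{0,n}|+1}\,\mathbf{U}_n^0\}_{n\ge0}$ empty, so the Riesz basis hypothesis of Theorem \ref{th4.1} collapses to the requirement that $\{\mathbf{U}_n^1\}_{n\ge0}$ be a Riesz basis in $\mathcal{H}$, which is exactly the hypothesis of the Corollary; likewise the eigenvalue-error quantity $\Lambda$ reduces to $\Lambda_1$, and the problem $B_0$ (with $H_0=\infty$) never enters. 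Thus the only point not covered literally by Theorem \ref{th4.1} is that here the datum $H$ is allowed to carry an error, coupled to the error in $q_2$ through \eqref{5.0}, whereas in Theorem \ref{th4.1} the parameter $H$ is frozen.

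To deal with this I would isolate the abstract part of the proof of Theorem \ref{th4.1}. That proof reduces the inverse problem to the main equation \eqref{xq2} and shows that the passage from the data $(\{\mathbf{U}_n^1\},\{\tau_n^1\})$ of this equation to its solution $\mathbf{K}$, and thence to $\varphi_0,\varphi_1$ through \eqref{2.5}, \eqref{2.6} and finally to $(q_1,h)$ by the method of spectral mappings, is locally Lipschitz once $\{\mathbf{U}_n^1\}_{n\ge0}$ is a Riesz basis and the perturbed system $\{\tilde{\mathbf{U}}_n^1\}_{n\ge0}$ stays close to it. This abstract stability statement, together with the realization step showing that the recovered $\tilde q_1,\tilde h$ (with $\tilde h+\tfrac12\int_0^{d_1}\tilde q_1=\omega_1$) actually produce the prescribed zeros $\tilde\lambda_{1,n}$ of the characteristic function via \eqref{2.11}, carries over verbatim when $I_0=\emptyset$. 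Everything therefore reduces to re-estimating the perturbations of $\mathbf{U}_n^1$ and $\tau_n^1$ when both $q_2$ and $H$ vary.

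The core estimate is the following. The entire functions $g_{1,0}(\lambda)=-a_1^{-1}\psi_1(d_2,\lambda)$ and $g_{1,1}(\lambda)=a_1\psi_1'(d_2,\lambda)+a_2\psi_1(d_2,\lambda)$ depend on $(q_2,H)$ only through the solution $\psi_1$, and $\mathbf{U}_1(t,\lambda)=(g_{1,1}(\lambda)s(t,\lambda),\,g_{1,0}(\lambda)c(t,\lambda))$ in \eqref{2.8} and $f_1(\lambda)$ in \eqref{2.10} are built from $g_{1,0},g_{1,1}$. Using the standard integral-equation representation of $\psi_1(x,\lambda)$ and its $x$-derivative, I would prove that, uniformly in $\lambda$, the differences $\tilde\psi_1(d_2,\lambda)-\psi_1(d_2,\lambda)$ and $\tilde\psi_1'(d_2,\lambda)-\psi_1'(d_2,\lambda)$ obey bounds of the shape $CQ_1\,|\rho|^{-1}e^{|\mathrm{Im}\,\rho|d_2}$ and $CQ_1\,e^{|\mathrm{Im}\,\rho|d_2}$, respectively, with $Q_1=|\tilde H-H|+\|\tilde q_2-q_2\|_{L^2(0,d_2)}$. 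The constraint \eqref{5.0} enters exactly here: both $\psi_1(d_2,\lambda)$ and $\psi_1'(d_2,\lambda)$ carry their leading correction through the combination $H+\tfrac12\int_0^{d_2}q_2$, and \eqref{5.0} keeps this combination fixed, so the slowest-decaying contribution of the $(q_2,H)$-shift cancels. This is precisely the combination that \eqref{4.0} preserves in Theorem \ref{th4.1} (where $H$ is fixed and $\int_0^{d_2}\tilde q_2=\int_0^{d_2}q_2$), which is why the coupled $(q_2,H)$-error can play the same role for the $B_1$-data as the pure $q_2$-error did there.

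Feeding these bounds into \eqref{xq} and \eqref{xq3}, I expect $\|\tilde{\mathbf{U}}_n^1-\mathbf{U}_n^1\|_{\mathcal{H}}$ and $|\tilde\tau_n^1-\tau_n^1|$ to be controlled in the appropriate $\ell^2$-sense by $\Lambda_1+Q_1$, the eigenvalue shifts contributing $\Lambda_1$ through the $\mu_{1,n}$-dependence of \eqref{xq}, \eqref{xq3} and the $(q_2,H)$-shift contributing $Q_1$ through $g_{1,0},g_{1,1}$. The abstract stability of the main equation then yields unique $\tilde q_1,\tilde h$ realizing $\{\tilde\lambda_{1,n}\}_{n\in I}$ and satisfying \eqref{5.2}. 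I expect the main obstacle to be this uniform-in-$\lambda$ asymptotic estimate for the $\psi_1$-perturbation, where one must combine the two independent error sources and track the exact cancellation forced by \eqref{5.0}; once that estimate is secured, the Riesz-basis perturbation argument and the realization step are routine adaptations of the proof of Theorem \ref{th4.1}.
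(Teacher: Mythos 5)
Your overall plan coincides with the paper's: Corollary \ref{th5.1} is obtained by running the proof of Theorem \ref{th4.1} with $I_0=\emptyset$, so that only the $B_1$-data enter, and by replacing the pure $q_2$-perturbation estimates with estimates for the coupled $(q_2,H)$-perturbation, where \eqref{5.0} forces the cancellation of the leading term. This is exactly what the paper does: its proof consists of invoking part (ii) of Lemma \ref{l4.1}, which was stated and proved for $\tilde H_1\neq H_1$ precisely for this purpose, inside the machinery of Theorem \ref{th4.1}. You also correctly identify the role of \eqref{5.0}.

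However, there is a genuine gap in the form of your core estimate. You propose to establish only uniform pointwise bounds, $|\tilde\psi_1(d_2,\lambda)-\psi_1(d_2,\lambda)|\le CQ_1|\rho|^{-1}e^{|\mathrm{Im}\,\rho|d_2}$ and $|\tilde\psi_1'(d_2,\lambda)-\psi_1'(d_2,\lambda)|\le CQ_1e^{|\mathrm{Im}\,\rho|d_2}$, and then to feed them into \eqref{xq} and \eqref{xq3}. For the vectors $\mathbf{U}_n^1$ this works, because the factor $s(t,\lambda)$ (respectively the difference of $g_{1,0}$) supplies an extra $|\rho|^{-1}$, making the squared differences summable. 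But for the scalars $\tau_n^1=f_1(\mu_{1,n})$ it fails: in \eqref{2.10} the difference $\tilde g_{1,0}-g_{1,0}$ is multiplied by $\omega_1\cos\rho a-\rho\sin\rho a$, which grows like $|\rho|e^{|\mathrm{Im}\,\rho|a}$, and $\tilde g_{1,1}-g_{1,1}$ is multiplied by a factor of order $e^{|\mathrm{Im}\,\rho|a}$; your pointwise bounds therefore only yield $|\tilde f_1(\mu_{1,n})-f_1(\mu_{1,n})|\le CQ_1$ uniformly in $n$ (the exponentials combine to $e^{|\mathrm{Im}\,\rho|}$, which is bounded on the subspectrum), so $\sum_{n\ge0}|\tilde\tau_n^1-\tau_n^1|^2$ diverges, whereas Proposition \ref{pA5} requires precisely this sum to be at most $C(\Lambda_1+Q_1)^2$. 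The missing ingredient — supplied by the paper's Lemma \ref{l4.1}(ii) — is the structural representation \eqref{4.3h}, \eqref{4.3hs} of the differences as Fourier-type integrals $\rho^{-1}\int_0^{d_2}W_1^0(t)\sin\rho t\,dt$ and $\int_0^{d_2}W_1^1(t)\cos\rho t\,dt$ with $\|W_1^i\|_{L^2(0,d_2)}\le CQ_1$, obtained from transformation operators and integration by parts, where \eqref{5.0} kills the boundary term $N^0(d_2,d_2)$. With this Paley--Wiener structure, $\tilde f_1-f_1$ evaluated along the separated points $\sqrt{\mu_{1,n}}$ (with bounded imaginary parts) is controlled by the Bessel-type inequality of Proposition \ref{pA4}, giving the needed $\ell^2$-bound $\le CQ_1^2$. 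Pointwise decay estimates alone, however sharp in $|\rho|$, cannot replace this step.
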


Furthermore, in the case of the half inverse problem (i.e., $d=1/2$), we do not need to require the Riesz-basis property of $ \{\mathbf{U}_n^1(t)\}_{n\ge0}$ and so obtain the following result.

\begin{corollary}\label{cor5.1}
Assume that $a=1/2$. Let $\{\lambda_{1,n}\}_{n\ge0}$ be the spectrum  of the problem $B_1=B(\frac{1}{2},\frac{1}{2},q_1,q_2,h,H ,a_1,a_2)$ with complex-valued potentials $q_j\in L^2(0,1/2),h,H,a_2\in \mathbb{C}$ and $a_1>0$. Then, there exists $\varepsilon>0 $ (depending on the problem $B_1$) such that, for an arbitrary sequence $\{\tilde{\lambda}_{1,n}\}_{n\ge0}$, any  $\tilde{q}_2\in L^2(0,1/2)$ and $\tilde{H}\in \mathbb{C}$ satisfying
  \begin{equation*}
 \Lambda_1:= \sqrt{\sum_{n\ge0}|{\lambda}_{1,n}-\tilde{\lambda}_{1,n}|^2}\le\varepsilon,
  \end{equation*}
  and
  \begin{equation*}
\tilde{H}+\frac{1}{2}\int_0^{\frac{1}{2}}\tilde{q}_2(t)dt=H+\frac{1}{2}\int_0^{\frac{1}{2}}q_2(t)dt,\quad Q_1:=|\tilde{H}-H|+\|\tilde{q}_2-q_2\|_{L^2(0,1/2)}\le \varepsilon,
\end{equation*}
respectively,
there exist unique $\tilde{q}_1\in L^2(0,\frac{1}{2})$ and $\tilde{h}\in \mathbb{C}$ such that  $\tilde{h}+\frac{1}{2}\int_0^{\frac{1}{2}}\tilde{q}_1(x)dx=\omega_1$, and $\{\tilde{\lambda}_{1,n}\}_{n\ge0}$ is the  spectrum  of the problem $\tilde{B}_1:=B(\frac{1}{2},\frac{1}{2},\tilde{q}_1,\tilde{q}_2,\tilde{h},\tilde{H} ,a_1,a_2)$. Moreover,
 \begin{equation*}
   \|\tilde{q}_1-q_1\|_{L^2(0,1/2)} \le C (\Lambda_1+Q_1),\quad |\tilde{h}-h|\le C (\Lambda_1+Q_1),
 \end{equation*}
 where $C>0$ depends only on the problem $B_1$.
\end{corollary}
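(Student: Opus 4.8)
The plan is to deduce Corollary~\ref{cor5.1} from Corollary~\ref{th5.1} by showing that, in the half-inverse case $a=1/2$, the Riesz-basis hypothesis of Corollary~\ref{th5.1} is automatically fulfilled and need not be assumed. Since $\{\lambda_{1,n}\}_{n\ge0}$ is now the \emph{full} spectrum and $2a=1$, it suffices --- by the remark following Theorem~\ref{th4.1}, i.e. by Lemma~\ref{l4} specialized to the case $I_0=\emptyset$ --- to verify that the system $\{c_n^1(t)\}_{n\ge0}$ from \eqref{2.26} is a Riesz basis in $L^2(0,1)$; this implies that $\{\mathbf{U}_n^1(t)\}_{n\ge0}$ is a Riesz basis in $\mathcal H$, which is exactly the hypothesis of Corollary~\ref{th5.1}.

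First I would establish the eigenvalue asymptotics. Substituting the standard asymptotics $\varphi(\tfrac12,\lambda)\sim\cos\tfrac{\rho}{2}$, $\varphi'(\tfrac12,\lambda)\sim-\rho\sin\tfrac{\rho}{2}$ and $\psi_1(\tfrac12,\lambda)\sim\cos\tfrac{\rho}{2}$, $\psi_1'(\tfrac12,\lambda)\sim-\rho\sin\tfrac{\rho}{2}$ into \eqref{2.0} yields the leading term
\begin{equation*}
\Delta_1(\lambda)=-\tfrac{\rho}{2}\,(a_1+a_1^{-1})\sin\rho+(\text{lower order}),
\end{equation*}
where $a_1+a_1^{-1}\ge 2>0$ since $a_1>0$. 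Hence the square roots of the eigenvalues satisfy $\rho_{1,n}=\pi n+\kappa_n$ with $\kappa_n\to0$ and $\{\kappa_n\}\in\ell^2$, and the eigenvalues are simple for all sufficiently large $n$, so that only finitely many generalized (derivative) functions $c_{n+\nu}^1$ occur in \eqref{2.26}.

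Next I would prove the Riesz-basis property. The reference system $\{\cos(\pi n t)\}_{n\ge0}$ is an orthogonal (hence Riesz) basis of $L^2(0,1)$, and $\{c_n^1(t)\}_{n\ge0}$ is an $\ell^2$-perturbation of it in the frequencies $\pi n\mapsto\rho_{1,n}$. Applying the perturbation proposition from the Appendix (which also accommodates the finitely many generalized functions arising from the multiple eigenvalues) gives that $\{c_n^1(t)\}_{n\ge0}$ is complete and forms a Riesz basis in $L^2(0,1)$. By the first paragraph this transfers to $\{\mathbf{U}_n^1(t)\}_{n\ge0}$ in $\mathcal H$, and Corollary~\ref{th5.1}, applied with $a=1/2$ and $I=\mathbb{N}_0$, then delivers the unique $\tilde q_1,\tilde h$ together with the stability estimate \eqref{5.2}, which is precisely the assertion of Corollary~\ref{cor5.1}.

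The main obstacle is the Riesz-basis verification in the third step: one must control the frequency perturbation quantitatively and, more delicately, treat the finitely many multiple eigenvalues of the non-self-adjoint problem, where the derivative functions $c_{n+\nu}^1$ must be incorporated into the basis. Showing that these generalized functions, together with the asymptotically simple part, still constitute a Riesz basis is the technical heart of the argument, and it is here that the auxiliary perturbation results of the Appendix are essential.
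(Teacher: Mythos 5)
Your overall route coincides with the paper's: specialize the eigenvalue asymptotics to $a=1/2$ (where, by Proposition \ref{pro1}, $\sqrt{\lambda_{1,n}}=n\pi+O(1/n)$ with an $\ell^2$ remainder, since $\Delta_1^0(\lambda)=-\rho b_+\sin\rho$ in this case), deduce that the system $\{c_n^1(t)\}_{n\ge0}$ of \eqref{2.26} is a Riesz basis in $L^2(0,1)$, and conclude via Lemma \ref{l4} (with $I_0=\emptyset$) and Corollary \ref{th5.1}. The gap is in your justification of the middle step, which you yourself identify as the technical heart. You assert that ``the perturbation proposition from the Appendix (which also accommodates the finitely many generalized functions arising from the multiple eigenvalues)'' yields the Riesz basicity of $\{c_n^1(t)\}_{n\ge0}$. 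No Appendix proposition does this. Proposition \ref{pA5} requires the total $\ell^2$-deviation from a fixed Riesz basis to be below a threshold $\varepsilon$ depending on that basis; an $\ell^2$-perturbation of the frequencies $n\pi\mapsto\sqrt{\lambda_{1,n}}$ gives a finite but not necessarily small deviation, so that proposition does not apply directly (and a tail argument would still leave completeness and the low-index part unresolved). More seriously, at a multiple eigenvalue the system contains the derivative functions $c^{\langle\nu\rangle}(t,\mu_{1,n})$ with $\nu\ge1$, e.g. $c^{\langle1\rangle}(t,\lambda)=-t\sin(\rho t)/(2\rho)$, which is not a small perturbation of any cosine, so no perturbation-of-elements argument in the style of Proposition \ref{pA5} can absorb it. Proposition \ref{A3} is equally unavailable: it presupposes pairwise distinct $\alpha_n$ with $\alpha_k\ne\overline{\alpha_l}$, which fails precisely in the case of multiplicities you need to treat.

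The paper closes this step by invoking an external result, Proposition 1 of \cite{BK}, which is tailored exactly to this situation: it gives the Riesz basicity in $L^2(0,1)$ of the system $\{c^{\langle\nu\rangle}(t,\lambda_{1,n})\}_{n\in\mathcal{S}_1,\,\nu=\overline{0,m_n^1-1}}$ built from a sequence with the asymptotics \eqref{asy1} specialized to $a=1/2$, multiplicities included (the same source is used inside the proof of Proposition \ref{A2}). If you wish to avoid citing \cite{BK}, you must actually prove such a statement: for instance, show that after a finite-dimensional modification the system is quadratically close to a Riesz basis, establish its completeness, and then apply Proposition 1.8.5 of \cite{FYU1} as is done in the proof of Lemma \ref{l4}. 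None of this is carried out in your proposal, so as written the argument does not go through; the rest of your reduction (asymptotics plus Lemma \ref{l4} plus Corollary \ref{th5.1}) matches the paper and is fine.
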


\begin{remark}
Under the requirement $Q_1 = 0$, Corollaries~\ref{th5.1} and~\ref{cor5.1} can be obtained by the method of \cite{YBX}.	
\end{remark}

\section{Some asymptotic estimates}
In this section, we investigate the asymptotic behavior of the functions $\{\mathbf{U}_n^i(t)\}_{n\ge0,i=0,1}$ defined in \eqref{xq}.

 Note that $\psi_1(d_2,\lambda)$ and $\psi_1'(d_2,\lambda)$ have expressions similar to \eqref{2.5} and \eqref{2.6} with $a$ and $\omega_1$ replaced by $d_2$ and $\omega_2:=H+\frac{1}{2}\int_0^{d_2}q_2(t)dt$, respectively. Then, we obtain
 \begin{equation}\label{jg1}
  g_{1,0}(\lambda)=-a_1^{-1}\cos \rho d_2-a_1^{-1}\omega_2 \frac{\sin \rho d_2}{\rho}+\frac{1}{\rho}\int_{-d_2}^{d_2} P_{1,0}(t)e^{\mathrm{i}\rho t}dt,
 \end{equation}
  \begin{equation}\label{jg2}
  g_{1,1}(\lambda)=-a_1\rho\sin \rho d_2+(a_1\omega_2+a_2) {\cos \rho d_2}+\int_{-d_2}^{d_2} P_{1,1}(t)e^{\mathrm{i}\rho t}dt,
 \end{equation}
 where $P_{1,j}(\cdot)\in L^2(-d_2,d_2)$.
Using the transformation operator expression for $\psi_0(x,\lambda)$, we also have
 \begin{equation}\label{jg3}
  g_{0,0}(\lambda)=-a_1^{-1}\frac{\sin \rho d_2}{\rho}+a_1^{-1}\omega_0 \frac{\cos \rho d_2}{\rho^2}+\frac{1}{\rho^2}\int_{-d_2}^{d_2} P_{0,0}(t)e^{\mathrm{i}\rho t}dt,
 \end{equation}
  \begin{equation}\label{jg4}
  g_{0,1}(\lambda)=a_1\cos \rho d_2+(a_1\omega_0+a_2) \frac{\sin \rho d_2}{\rho}+\frac{1}{\rho}\int_{-d_2}^{d_2} P_{0,1}(t)e^{\mathrm{i}\rho t}dt,
 \end{equation}
where $P_{0,j}\in L^2(-d_2,d_2)$ and $\omega_0=\frac{1}{2}\int_0^{d_2}q_2(t)dt$. Substituting \eqref{2.5}--\eqref{2.6} and \eqref{jg1}--\eqref{jg4} into \eqref{2.0} and using the Paley-Wiener Theorem (see, e.g., \cite[p.101]{Yo}), we calculate
\begin{equation}\label{zop1}
 \Delta_1(\lambda)=\Delta_1^0(\lambda)+\eta_+\cos \rho+\eta_-\cos \rho (2a-1)+\int_{-1}^{1} P_{1}(t)e^{\mathrm{i}\rho t}dt,
\end{equation}
\begin{equation}\label{zop}
 \Delta_0(\lambda)=\Delta_0^0(\lambda)+\zeta_+\frac{\sin \rho}{\rho}+\zeta_-\frac{\sin \rho (2a-1)}{\rho}+\frac{1}{\rho}\int_{-1}^{1} P_{0}(t)e^{\mathrm{i}\rho t}dt,
\end{equation}
where $P_i\in L^2(-1,1)$, $i=0,1$, and
 \begin{equation*}
   \Delta_1^0(\lambda):=-\rho\left[b_+\sin \rho -b_- \sin \rho(2a-1)\right],\;\; \Delta_0^0(\lambda):=b_+\cos \rho +b_- \cos \rho(2a-1),
 \end{equation*}
\begin{equation*}
 b_\pm=\frac{a_1\pm a_1^{-1}}{2},\quad \eta_\pm=b_\pm(\omega_2\pm\omega_1)\pm\frac{a_2}{2},\quad \zeta_\pm=b_\pm(\omega_1\pm\omega_0)\pm\frac{a_2}{2}.
\end{equation*}
For $i = 0, 1$, let $\{\lambda_{i,n}^0\}_{n\ge0}$ be the zeros of $\Delta_i^0{(\lambda)}$, which are real and simple, since $a_1>0$. Denote $\rho_{i,n}^0:=\sqrt{\lambda_{i,n}^0}$. Using a similar method to Lemma 1 in \cite{AM}, it is easy to get that for each fixed $i=0,1$, the sequence $\{\rho_{i,n}^0\}$ is separated, namely, $|\rho_{i,n}^0-\rho_{i,m}^0|\ge c_0>0$ whenever $n\neq m$.
Using the standard method involving the Rouch\`{e} Theorem together with \eqref{zop1} and \eqref{zop}, one can prove the following proposition (see, e.g., \cite{FYU1,YUR1,AM}).
\begin{proposition}\label{pro1}
The eigenvalues $\{\lambda_{i,n}\}_{n\ge0}$ of the problem $B_i$ have the asymptotic behavior
\begin{equation}\label{asy1}
  \rho_{i,n}:=\sqrt{\lambda_{i,n}}=\rho_{i,n}^0+\frac{\theta_{i,n}}{\rho_{i,n}^0}+\frac{\kappa_{i,n}}{\rho_{i,n}^0},\quad \{\kappa_{i,n}\}\in l^2,\quad i=0,1,
\end{equation}
where
\begin{equation}\label{theta}
  \theta_{1,n}=\frac{\eta_+\cos \rho_{1,n}^0+\eta_-\cos \rho_{1,n}^0(2a-1)}{2\dot{\Delta}_1^0(\lambda_{1,n}^0)},\quad \theta_{0,n}=-\frac{\zeta_+\sin \rho_{0,n}^0+\zeta_-\sin \rho_{0,n}^0(2a-1)}{2\rho_{0,n}^0\dot{\Delta}_0^0(\lambda_{0,n}^0)},
\end{equation}
here $\dot{\Delta}_i^0(\lambda):=\frac{d{\Delta}_i^0(\lambda)}{d\lambda}$. Moreover, for each $i=0,1$, there is $n_i\in \mathbb{N}_0$ such that the sequence $\{\rho_{i,n}\}_{n\ge n_i}$ is separated.
\end{proposition}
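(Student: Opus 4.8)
The plan is to locate the zeros of $\Delta_i(\lambda)$ by comparing them with the zeros of the explicit leading part $\Delta_i^0(\lambda)$ via Rouch\'{e}'s theorem, and then to read off the two terms in \eqref{asy1} from a first-order Taylor expansion at the unperturbed zeros. Throughout I would work in the $\rho$-plane, since $\Delta_i$ and $\Delta_i^0$ are even functions of $\rho$. First I would record the lower bound $|\Delta_i^0(\rho)|\ge c|\rho|^{i}e^{|\mathrm{Im}\,\rho|}$, valid (because $b_+=(a_1+a_1^{-1})/2\ge 1>|b_-|$) on the contours that keep distance $\ge c_0/2$ from the separated set $\{\pm\rho_{i,n}^0\}$, namely small circles of radius $c_0/2$ about each $\rho_{i,n}^0$ together with large circles $|\rho|=R_N$; the separation of $\{\rho_{i,n}^0\}$ is the one quoted before the statement and established as in \cite{AM}. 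On the same contours the correction terms in \eqref{zop1}--\eqref{zop} are $o(|\rho|^{i}e^{|\mathrm{Im}\,\rho|})$, smaller by a factor $O(1/|\rho|)$: the bounded harmonics $\eta_\pm\cos(\cdots)$, resp.\ $\zeta_\pm\rho^{-1}\sin(\cdots)$, and the Paley--Wiener integrals $\int_{-1}^1 P_i(t)e^{\mathrm{i}\rho t}\,dt=o(e^{|\mathrm{Im}\,\rho|})$. Hence $|\Delta_i-\Delta_i^0|<|\Delta_i^0|$ on these contours for $|\rho|$ large, and Rouch\'{e}'s theorem gives a one-to-one correspondence (with multiplicities) between the zeros of $\Delta_i$ and of $\Delta_i^0$, together with the crude localization $\rho_{i,n}=\rho_{i,n}^0+o(1)$ for $n\ge n_i$.

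Next I would refine this. Writing $\rho_{i,n}=\rho_{i,n}^0+\delta_{i,n}$, using $\Delta_i^0(\lambda_{i,n}^0)=0$ and $\lambda_{i,n}-\lambda_{i,n}^0=2\rho_{i,n}^0\delta_{i,n}+\delta_{i,n}^2$, a first-order expansion in $\lambda$ gives $\Delta_i^0(\lambda_{i,n})=2\rho_{i,n}^0\dot\Delta_i^0(\lambda_{i,n}^0)\,\delta_{i,n}+O\bigl((\rho_{i,n}^0\delta_{i,n})^2\bigr)$. A direct computation shows that $2\rho_{i,n}^0\dot\Delta_i^0(\lambda_{i,n}^0)$ stays bounded away from zero for $n\ge n_i$ (the $\lambda_{i,n}^0$ are simple), so the linearized relation can be solved for $\delta_{i,n}$. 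Substituting the expansion into $\Delta_i(\lambda_{i,n})=0$ and replacing the correction terms of \eqref{zop1}--\eqref{zop} by their values at $\rho_{i,n}^0$ (the discrepancy being of higher order, e.g.\ $\cos\rho_{i,n}=\cos\rho_{i,n}^0-\delta_{i,n}\sin\rho_{i,n}^0+\cdots$), one obtains the explicit first-order coefficient $\theta_{i,n}$ recorded in \eqref{theta}, and in particular the a posteriori bound $\delta_{i,n}=O(1/\rho_{i,n}^0)$, which retroactively justifies the replacements. The different powers of $\rho_{i,n}^0$ appearing in $\theta_{1,n}$ and $\theta_{0,n}$ simply reflect that the leading correction is $O(1)$ for $i=1$ but $O(1/\rho)$ for $i=0$.

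The step I expect to be the main obstacle is to show that the remainder $\{\kappa_{i,n}\}$ is genuinely square-summable, rather than merely $o(1)$. This reduces to the $l^2$-property of the sampled integrals $\bigl\{\int_{-1}^1 P_i(t)e^{\mathrm{i}\rho_{i,n}^0 t}\,dt\bigr\}_{n}$. Since $\rho_{i,n}^0=\pi n+O(1)$ and $\{\rho_{i,n}^0\}$ is separated, the exponential system $\{e^{\mathrm{i}\rho_{i,n}^0 t}\}_n$ is a Bessel sequence in $L^2(-1,1)$ (see \cite{Yo}), so these integrals are the square-summable coefficients of $P_i\in L^2(-1,1)$; the $\delta_{i,n}$-dependent and quadratic remainders are absorbed by the same Bessel estimate combined with the bound $\delta_{i,n}=O(1/\rho_{i,n}^0)$. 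This yields $\{\kappa_{i,n}\}\in l^2$ and hence \eqref{asy1}. Finally, the separation of $\{\rho_{i,n}\}_{n\ge n_i}$ is immediate from $\rho_{i,n}=\rho_{i,n}^0+O(1/\rho_{i,n}^0)$ and $|\rho_{i,n}^0-\rho_{i,m}^0|\ge c_0$, which give $|\rho_{i,n}-\rho_{i,m}|\ge c_0/2$ for all large $n\ne m$.
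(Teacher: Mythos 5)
Your proposal is correct and takes essentially the same route as the paper, whose proof of Proposition \ref{pro1} consists precisely in invoking the standard Rouch\'{e}-theorem method based on \eqref{zop1} and \eqref{zop} (with references to \cite{FYU1,YUR1,AM}); your write-up supplies exactly those standard steps --- the Rouch\'{e} localization against $\Delta_i^0$, the linearization at $\lambda_{i,n}^0$, and the Bessel-sequence bound for the $l^2$ remainder, the last of which is the paper's Proposition \ref{pA4}. One caveat: carried out literally, your linearization $0=2\rho_{1,n}^0\dot{\Delta}_1^0(\lambda_{1,n}^0)\delta_{1,n}+\eta_+\cos\rho_{1,n}^0+\eta_-\cos\rho_{1,n}^0(2a-1)+\dots$ yields $\theta_{1,n}$ with an extra minus sign relative to \eqref{theta} (consistent with the minus sign that does appear in $\theta_{0,n}$), so the paper's displayed $\theta_{1,n}$ apparently carries a sign typo rather than your derivation a gap.
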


Using \eqref{jg1}, \eqref{jg2} in \eqref{2.8} and \eqref{jg3}, \eqref{jg4} in  \eqref{2.10}, we obtain the following relations for large $|\rho|$:
\begin{equation}\label{jg5}
U_{1,1}(t,\lambda)=u_{1,1}(t,\lambda)+O\left(\frac{e^{|\mathrm{Im}\rho|(d_2+t)}}{|\rho|}\right),\ U_{1,2}(t,\lambda)=u_{1,2}(t,\lambda)+O\left(\frac{e^{|\mathrm{Im}\rho|(d_2+t)}}{|\rho|}\right),
\end{equation}
\begin{equation}\label{jg6}
U_{0,1}(t,\lambda)=u_{0,1}(t,\lambda)+O\left(\frac{e^{|\mathrm{Im}\rho|(d_2+t)}}{|\rho|^2}\right),\ U_{0,2}(t,\lambda)=u_{0,2}(t,\lambda)+O\left(\frac{e^{|\mathrm{Im}\rho|(d_2+t)}}{|\rho|^2}\right),
\end{equation}
where
\begin{equation}\label{jg5s}
u_{1,1}(t,\lambda)=\frac{a_1}{2}[\cos \rho (d_2+t)-\cos \rho (d_2-t)],\quad u_{1,2}(t,\lambda)=\frac{-1}{2a_1}[\cos \rho (d_2-t)+\cos \rho (d_2+t)],
\end{equation}
\begin{equation}\label{jg6s}
u_{0,1}(t,\lambda)=\frac{a_1}{2\rho}[\sin\rho (d_2+t)-\sin \rho (d_2-t)],\quad u_{0,2}(t,\lambda)=\frac{-1}{2a_1\rho}[\sin\rho (d_2-t)+\sin\rho (d_2+t)].
\end{equation}

Denote $\mathbf{u}_i(t,\lambda)=(u_{i,1}(t,\lambda),u_{i,2}(t,\lambda))$, $i=0,1$. From \eqref{jg5s} and \eqref{jg6s}, and recalling $d_2=1-a$, we have
\begin{align}\label{jg9}
\notag \langle \mathbf{u}_1,\mathbf{u}_1\rangle=&\frac{a_1^2}{4}\int_0^a [ \cos\overline{\rho}(d_2-t)-\cos \overline{\rho}(d_2+t)][ \cos{\rho}(d_2-t)-\cos {\rho}(d_2+t)]dt\\
\notag&+\frac{1}{4a_1^2}\int_0^a [ \cos\overline{\rho}(d_2-t)+\cos \overline{\rho}(d_2+t)][ \cos{\rho}(d_2-t)+\cos {\rho}(d_2+t)]dt\\
\notag=&\frac{a_1^2+a_1^{-2}}{4}\int_{1-2a}^{1} \cos(\overline{\rho}t)\cos({\rho}t)dt+\frac{a_1^{-2}-a_1^{2}}{4}\int_{1-2a}^{1} \cos({\rho}(2d_2-t))\cos(\overline{\rho}t)dt\\
\notag=&\frac{a_1^2+a_1^{-2}}{8}\int_{1-2a}^{1} [\cos(\overline{\rho}-{\rho})t+\cos(\overline{\rho}+{\rho})t]dt\\
&+\frac{a_1^{-2}-a_1^2}{8}\int_{1-2a}^{1} [\cos(2d_2{\rho}-({\rho}+\overline{\rho})t)+\cos(2d_2{\rho}-({\rho}-\overline{\rho})t)]dt
\end{align}
 \begin{align}\label{jg10}
\notag \langle \mathbf{u}_0,\mathbf{u}_0\rangle=&\frac{a_1^2}{4|\lambda|}\int_0^a [ \sin\overline{\rho}(d_2-t)-\sin\overline{\rho}(d_2+t)][ \sin{\rho}(d_2-t)-\sin {\rho}(d_2+t)]dt\\
\notag&+\frac{1}{4a_1^2|\lambda|}\int_0^a [ \sin\overline{\rho}(d_2-t)+\sin\overline{\rho}(d_2+t)][ \sin{\rho}(d_2-t)+\sin {\rho}(d_2+t)]dt\\
\notag=&\frac{a_1^2+a_1^{-2}}{4|\lambda|}\int_{1-2a}^{1} \sin(\overline{\rho}t)\sin({\rho}t)dt+\frac{a_1^{-2}-a_1^{2}}{4}\int_{1-2a}^{1} \sin(\overline{\rho}t)\sin({\rho}(2d_2-t))dt\\
\notag=&\frac{a_1^2+a_1^{-2}}{8|\lambda|}\int_{1-2a}^{1} [\cos(\overline{\rho}-{\rho})t-\cos(\overline{\rho}+{\rho})t]dt\\
&+\frac{a_1^{-2}-a_1^2}{8|\lambda|}\int_{1-2a}^{1} [\cos(2d_2{\rho}-({\rho}+\overline{\rho})t)-\cos(2d_2{\rho}-({\rho}-\overline{\rho})t)]dt
\end{align}
By Proposition \ref{pro1}, we know that $ \mathrm{Im} \sqrt{\lambda_{i,n}}\to0$ and $\mathrm{Re} \sqrt{\lambda_{i,n}}\to\infty$ as $n\to\infty$. Using \eqref{jg5} and \eqref{jg6} in \eqref{xq}, together with \eqref{jg9} and \eqref{jg10}, and with the help of the mean value theorem, we obtain
\begin{equation}\label{jg11}
  \|\mathbf{U}_{n}^1\|_{\mathcal{H}}^2=\frac{(a_1^2+a_1^{-2})a}{4}[1+o(1)]+\frac{(a_1^{-2}-a_1^{2})a}{4}\left[\cos(2\sqrt{\mu_{1,n}}(1-a))+o(1)\right],\  n\to\infty,
\end{equation}
\begin{equation}\label{jg11sa}
  \|\mathbf{U}_{n}^0\|_{\mathcal{H}}^2=\frac{(a_1^2+a_1^{-2})a}{4|\mu_{0,n}|}[1+o(1)]-\frac{(a_1^{-2}-a_1^{2})a}{4|\mu_{0,n}|}\left[\cos(2\sqrt{\mu_{0,n}}(1-a))+o(1)\right],\  n\to\infty.
\end{equation}
By virtue of the eigenvalue asymptotics, we know that $|\cos(2\sqrt{\mu_{i,n}}(1-a))|\le1$, $i=0,1$, for sufficiently large $n$.
It follows from \eqref{jg11} and \eqref{jg11sa} that
\begin{equation}\label{uni}
  \|\mathbf{U}_{n}^i\|_{\mathcal{H}}^2=\frac{(a_1^2+a_1^{-2})a}{4|\mu_{0,n}|^{1-i}}[1+c_{i,n}+o(1)],\quad i=0,1,\quad  n\to\infty,
\end{equation}
where $\{c_{i,n}\}$ is a sequence bounded by a constant less than $1$.

\section{Proofs of the uniqueness theorems}

In this section, we give the proofs of Theorems \ref{th2} and \ref{th2s}. In order to prove the necessity in Theorem~\ref{th2}, we need the following proposition on the local solvability and stability of the inverse problem by the Cauchy data.

\begin{proposition}[See \cite{XB1}]\label{thca}
Let $q_1(x)$ be a fixed complex-valued function from $L^2(0,a)$, and let $h \in \mathbb{C}$ be a fixed number. Denote by $\{K_1,K_2,\omega_1 \}$  the corresponding Cauchy data. Then there exists $\varepsilon>0$ (depending only on $q_1$ and $h $) such that, for any functions $\{\tilde{K}_1,\tilde{K}_2\}$ satisfying
\begin{equation}\label{cau}
\Xi:=\max\{\|\tilde{K}_1-K_1\|_{L^2(0,a)},\|\tilde{K}_2-K_2\|_{L^2(0,a)}\}\le \varepsilon,
\end{equation}
there exists a unique  function $\tilde{q}_1\in L^2 (0,a)$ such that $\{\tilde{K}_1,\tilde{K}_2,\omega_1 \}$ are the Cauchy data for $\tilde{q}_1$ and $\tilde{h} =\omega_1 -\frac{1}{2}\int_0^a\tilde{q}_1(x)dx$. Moreover,
\begin{equation}\label{cau1}
 \|\tilde{q}_1-q_1\|_{L^2(0,a)} \le C\Xi,\quad |\tilde{h} -h | \le C\Xi,
\end{equation}
 where $C$ depends only on $q_1$ and $h $.
\end{proposition}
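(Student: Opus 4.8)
The plan is to prove the proposition in two stages, following the route Cauchy data $\to(\varphi_0,\varphi_1)\to(q_1,h)$ indicated before Algorithm~\ref{alg:1}. In the first stage I would use \eqref{2.5} and \eqref{2.6} to reconstruct the entire functions $\varphi_0(\lambda)=\varphi(a,\lambda)$ and $\varphi_1(\lambda)=\varphi'(a,\lambda)$ from the Cauchy data. Since the admissible perturbations keep $\omega_1$ fixed, the differences take the explicit linear form
\begin{equation*}
\tilde\varphi_0(\lambda)-\varphi_0(\lambda)=-\frac1\rho\int_0^a(\tilde K_1-K_1)(t)\sin\rho t\,dt,\qquad
\tilde\varphi_1(\lambda)-\varphi_1(\lambda)=\int_0^a(\tilde K_2-K_2)(t)\cos\rho t\,dt,
\end{equation*}
so that, by the Paley--Wiener theorem and Bessel's inequality for the systems $\{\sin\rho_n t\}$ and $\{\cos\rho_n t\}$ in $L^2(0,a)$ (with $\{\rho_n\}$ a model sequence having the standard asymptotics, cf.\ Proposition~\ref{pro1}), the deviations of $\varphi_0,\varphi_1$ measured on these sequences are controlled in $\ell^2$ by $\Xi$ from \eqref{cau}. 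This stage is purely linear and its stability is routine.

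In the second stage I would recover $(q_1,h)$ from the pair $(\varphi_0,\varphi_1)$. The zeros of $\varphi_0$ and of $\varphi_1$ are the two spectra of the problem on $(0,a)$ with the common Robin condition $y'(0)-hy(0)=0$ and with the Dirichlet, respectively Neumann, condition at $x=a$, and together they determine $q_1$ and $h$. To obtain local solvability and stability I would pass to the Weyl function $M(\lambda)$: its poles are the zeros of $\varphi_0$, and the corresponding norming constants are computed from the Wronskian identity $\varphi_0(\lambda)s'(a,\lambda)-\varphi_1(\lambda)s(a,\lambda)\equiv1$ (where $s$ solves the equation with $s(0)=0,\ s'(0)=1$), which expresses the residues as explicit rational functions of $\varphi_1(\lambda_n)$ and $\dot\varphi_0(\lambda_n)$ at simple eigenvalues. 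The $\ell^2$-estimate from the first stage then transfers to an $\ell^2$-estimate for the perturbation of the spectral data $\{\lambda_n,\alpha_n\}$.

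With the spectral data in hand, I would invoke the method of spectral mappings for the non-self-adjoint Sturm--Liouville operator (see \cite{B,FY}): form the associated linear main equation $(I+\tilde R(x))\tilde\psi(x,\cdot)=\psi(x,\cdot)$ in a suitable Banach space, show that $\tilde R(x)$ is small in operator norm once $\Xi\le\varepsilon$ so that $I+\tilde R(x)$ is boundedly invertible uniformly in $x$, and read off $\tilde q_1$ from its solution together with $\tilde h=\omega_1-\tfrac12\int_0^a\tilde q_1$ (forced by $\omega_1$ being fixed). Bounding the solution of the main equation by the norm of the perturbation of its right-hand side yields $\|\tilde q_1-q_1\|_{L^2(0,a)}\le C\Xi$ and $|\tilde h-h|\le C\Xi$, i.e.\ \eqref{cau1}. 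A final consistency check, that the constructed $\tilde q_1,\tilde h$ indeed reproduce the prescribed data $\{\tilde K_1,\tilde K_2,\omega_1\}$, closes the argument.

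The main obstacle, as throughout the paper, is the possible coalescence of the zeros of $\varphi_0$ or $\varphi_1$ into multiple eigenvalues together with the complex-valuedness of $q_1$. This forces one to group nearby eigenvalues into clusters, to replace ordinary norming constants by generalized (derivative) norming constants, and to set up the main equation over these groups. The delicate point is to preserve the invertibility of $I+\tilde R(x)$ and to keep the stability constant $C$ uniform with respect to the clustering, so that $C$ depends only on the unperturbed $q_1$ and $h$; I expect this uniformity, rather than either of the two stages individually, to be the technical heart of the proof.
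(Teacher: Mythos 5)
The first thing to note is that the paper contains no proof of Proposition \ref{thca} at all: it is imported verbatim from \cite{XB1} and used in this paper as a black box (in the necessity part of Theorem \ref{th2} and in the proof of Theorem \ref{th4.1}). So your proposal can only be judged against the route such results take in the literature and against the reconstruction route the paper itself sketches in Algorithm \ref{alg:1} --- Cauchy data $\to(\varphi_0,\varphi_1)$ via \eqref{2.5}--\eqref{2.6}, then the method of spectral mappings --- which is indeed the route you chose. Your first stage (the linear, Fourier-type control of $\tilde\varphi_j-\varphi_j$ by $\Xi$) is correct and routine.

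The remainder, however, has two genuine gaps. First, existence is not closed by a ``final consistency check.'' From $\tilde M=\tilde\varphi_1/\tilde\varphi_0$ you can at best construct a potential whose Weyl function equals $\tilde M$, and $\tilde M$ carries only the \emph{ratio} of the pair. To conclude that the Cauchy data of the constructed $\tilde q_1$ are exactly $(\tilde K_1,\tilde K_2)$, you must show the pair itself is matched, which needs two facts you never establish: (a) that $\tilde\varphi_0,\tilde\varphi_1$ --- defined \emph{formally} by \eqref{2.5}--\eqref{2.6} from arbitrary perturbed kernels --- have no common zeros for small $\varepsilon$, and (b) that entire functions of this particular form are uniquely determined by their zeros (a Hadamard-factorization argument for sine-type functions), so that the genuine characteristic functions of $\tilde q_1,\tilde h$ coincide with $\tilde\varphi_0,\tilde\varphi_1$ and Fourier inversion returns $\tilde K_1,\tilde K_2$. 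There is a danger of circularity here: the Wronskian identity you invoke, $\varphi_0(\lambda)s'(a,\lambda)-\varphi_1(\lambda)s(a,\lambda)\equiv 1$, is a property of genuine characteristic functions of an operator; the perturbed pair satisfies no such identity until \emph{after} existence has been proved. Second, the quantitative core of the proposition --- the $\ell^2$ transfer from $\Xi$ in \eqref{cau} to perturbations of the poles and generalized residues of $\tilde M$ in the presence of clusters of multiple eigenvalues, and the uniform bounded invertibility of $I+\tilde R(x)$ yielding \eqref{cau1} with a constant $C$ depending only on $(q_1,h)$ --- is precisely what you defer as the expected ``technical heart.'' For complex potentials, where eigenvalues can merge and split under perturbation, this is exactly where such proofs live or die (it is why \cite{BB,MW} and the present paper develop interpolation over clusters, cf.\ Proposition \ref{lA1}). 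Acknowledging that step without carrying it out leaves the proposal a plausible program, not a proof.
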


\begin{proof}[Proof of Theorem \ref{th2}] Firstly, note that the given data in Inverse Problem \ref{ip2} implies the unique determination of $U(t,\lambda)$, $f(\lambda)$, $\{\mathbf{U}_n^i(t)\}_{n\ge0,i=0,1}$ and $\{\tau_n^i\}_{n\ge0,i=0,1}$. Due to the condition that Inverse problem \ref{ip2} is solvable,  assume $(q_1,h)$ is the solution such that $\{\mu_{i,n}\}_{n\ge0}$  are the  subspectra of the corresponding problems $B(d_1,d_2,q_1,q_2,h,H_i ,a_1,a_2)$ ($i=0,1$),  where $H_0=\infty$ and $H_1=H$. Let $\{K_1,K_2,\omega_1\}$ be the Cauchy data  for $q_1$ and $h$.

(Sufficiency). If  the system $\{\mathbf{U}_n^i(t)\}_{n\ge0,i=0,1}$ defined in \eqref{xq} is complete in $\mathcal{H}$, then there is at most one $\mathbf{K}(t)$ satisfying the main equations \eqref{xq2}. It is obvious that $\mathbf{K}(t) =(\overline{K_1(t)},\overline{K_2(t)})$. Note that there is at most one pair $(q_1,h)$ corresponding to the Cauchy data  $\{K_1,K_2,\omega_1\}$. Hence, the sufficiency is valid.

(Necessity). If the system $\{\mathbf{U}_n^i(t)\}_{n\ge0,i=0,1}$ defined in \eqref{xq} is not complete in $\mathcal{H}$, then there exists a
solution $\hat{\mathbf{K}}(t)=\left(\overline{\hat{K}_1(t)},\overline{\hat{K}_2(t)}\right)$ ($\|\hat{\mathbf{K}}\|_{\mathcal{H}}\neq0$) satisfying
\begin{equation}\label{xyb}
  \left\langle\hat{\mathbf{K}},\mathbf{U}_{n}^i\right\rangle=0,\quad n\ge0,\quad i=0,1.
\end{equation}
Due to the linearity of  \eqref{xyb} for $\hat{\mathbf{K}}$, we can choose $\hat{\mathbf{K}}$ such that $\|\hat{\mathbf{K}}\|_{\mathcal{H}}\le \varepsilon$ for $\varepsilon$ from Proposition \ref{thca}.  Define
\begin{equation*}
\tilde{\mathbf{K}}:=\left(\overline{\tilde{K}_1},\overline{\tilde{K}_2}\right),\quad   \tilde{K}_1:=K_1+\hat{K}_1,\quad  \tilde{K}_2:=K_2+\hat{K}_2.
\end{equation*}
Then using Proposition \ref{thca}, we know that there is a unique $\tilde{q}_1\in L^2(0,1)$ such that  $\{\tilde{K}_1,\tilde{K}_2,\omega_1 \}$  are the Cauchy data for $\tilde{q}_1$ and $\tilde{h} =\omega_1 -\frac{1}{2}\int_0^a\tilde{q}_1(x)dx$. Define the functions
\begin{equation*}
\tilde{\varphi}_0(\lambda):=\cos\rho a+ {\omega}_1 \frac{\sin \rho a}{\rho}-\int_0^a \tilde{K}_1(t)\frac{\sin \rho t}{\rho}dt,
\end{equation*}
\begin{equation*}
\tilde{\varphi}_1(\lambda):=-\rho\sin\rho a+{\omega}_1 \cos \rho a+\int_0^a \tilde{K}_2(t)\cos \rho tdt,
\end{equation*}
\begin{equation}\label{xyb2}
\tilde{\Delta}_i(\lambda)=\left\langle \tilde{\mathbf{K}}(\cdot),{\mathbf{U}}_i(\cdot,\lambda)\right\rangle-{f}_i(\lambda) ,\quad i=0,1.
\end{equation}
It is easy to get that functions $\tilde{\Delta}_i(\lambda)$ defined in \eqref{xyb2} have the expressions $\tilde{\Delta}_i(\lambda)= \tilde{\varphi}_0(\lambda)g_{i,1}(\lambda)-\tilde{\varphi}_1(\lambda)g_{i,0}(\lambda)$, $i=0,1$.
Thus, $\tilde{\Delta}_i(\lambda)$ are the characteristic functions of the corresponding problems $B(d_1,d_2,\tilde{q}_1,q_2,\tilde{h},H_i ,a_1,a_2)$, $i=0,1$.
Due to \eqref{xq2}, \eqref{xyb} and \eqref{xyb2}, we get that $\mu_{i,n}$ ($n\in \mathcal{S}_i$) are zeros of  $\tilde{\Delta}_i(\lambda)$,  $i=0,1$,  with the corresponding multiplicities $m_n^i$. Thus, $\{\mu_{i,n}\}_{n\ge0}$ are   the subspectra of the problems $B(d_1,d_2,\tilde{q}_1,q_2,\tilde{h},H_i ,a_1,a_2)$, respectively, $i=0,1$. We have proved that the solution of Inverse Problem \ref{ip2} is not unique if $\{\mathbf{U}_n^i(t)\}_{n\ge0,i=0,1}$ defined in \eqref{xq} is not complete in $\mathcal{H}$. The proof of necessity is complete.
\end{proof}

Now, let us prove Theorem \ref{th2s} by proving two lemmas.
Similar  to \eqref{2.8} and \eqref{xq}, define
\begin{equation}\label{2.13}
\mathbf{V}(t,\lambda):=(V_1(t,\lambda),V_{2}(t,\lambda)),\quad V_1(t,\lambda):=\varphi_1(\lambda)s(t,\lambda),\quad V_{2}(t,\lambda):=\varphi_0(\lambda) c(t,\lambda),
\end{equation}
and
 \begin{equation}\label{xq1}
 \mathbf{V}_{n+\nu}^i(t):=V^{\langle \nu\rangle}(t,\mu_{i,n}), \quad n\in \mathcal{S}_i,\quad \nu=\overline{0,m_n^i-1} ,\quad i=0,1.
  \end{equation}

  \begin{lemma}\label{l2}
  The system $\{\mathbf{U}_n^i(t)\}_{n\ge0,i=0,1}$ is complete in $\mathcal{H}$ if and only if $\{\mathbf{V}_n^i(t)\}_{n\ge0,i=0,1}$ is complete in $\mathcal{H}$.
  \end{lemma}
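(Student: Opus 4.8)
The plan is to translate both completeness statements into conditions on the vanishing of certain scalar entire functions at the eigenvalues, and then to prove that these two vanishing conditions are equivalent for every fixed $\mathbf{K}\in\mathcal{H}$. Fix $\mathbf{K}=(\overline{K_1},\overline{K_2})\in\mathcal{H}$ and introduce the entire functions
\[
A(\lambda):=\int_0^a K_1(t)s(t,\lambda)\,dt,\qquad B(\lambda):=\int_0^a K_2(t)c(t,\lambda)\,dt.
\]
By \eqref{2.8} and \eqref{2.13} we then have $\langle\mathbf{K},\mathbf{U}_i(\cdot,\lambda)\rangle=g_{i,1}(\lambda)A(\lambda)+g_{i,0}(\lambda)B(\lambda)=:F_i(\lambda)$ and $\langle\mathbf{K},\mathbf{V}(\cdot,\lambda)\rangle=\varphi_1(\lambda)A(\lambda)+\varphi_0(\lambda)B(\lambda)=:G(\lambda)$. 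Since the inner product is linear in its second argument and $\mathbf{K}$ does not depend on $\lambda$, the $\lambda$-differentiation in \eqref{xq} and \eqref{xq1} commutes with the integration, so that $\langle\mathbf{K},\mathbf{U}_{n+\nu}^i\rangle=F_i^{\langle\nu\rangle}(\mu_{i,n})$ and $\langle\mathbf{K},\mathbf{V}_{n+\nu}^i\rangle=G^{\langle\nu\rangle}(\mu_{i,n})$. Consequently, $\mathbf{K}$ is orthogonal to the whole system $\{\mathbf{U}_n^i\}_{n\ge0,i=0,1}$ if and only if $F_i$ vanishes at each $\mu_{i,n}$ ($n\in\mathcal{S}_i$) to order at least $m_n^i$ for $i=0,1$, and likewise for $\{\mathbf{V}_n^i\}_{n\ge0,i=0,1}$ with $G$ in place of $F_i$.

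Thus the two completeness properties have the same orthogonal complement, and the lemma will follow once I show, for every fixed $\mathbf{K}$, that $F_i$ possesses these prescribed zeros if and only if $G$ does. The bridge between $F_i$ and $G$ is a pair of algebraic identities coming from \eqref{2.0}: since $\Delta_i=\varphi_0 g_{i,1}-\varphi_1 g_{i,0}$, a direct computation yields
\[
\varphi_0(\lambda)F_i(\lambda)-g_{i,0}(\lambda)G(\lambda)=\Delta_i(\lambda)A(\lambda),\qquad g_{i,1}(\lambda)G(\lambda)-\varphi_1(\lambda)F_i(\lambda)=\Delta_i(\lambda)B(\lambda).
\]
By Proposition~\ref{pro1} and the definition of $m_n^i$, the points $\mu_{i,n}$ are the zeros of $\Delta_i$ of multiplicity $m_n^i$, so (as $A,B$ are entire) both right-hand sides vanish to order at least $m_n^i$ at $\lambda_0:=\mu_{i,n}$.

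I would then run a local order-of-vanishing argument at each $\lambda_0$. If $G$ vanishes to order $\ge m_n^i$, the first identity forces $\varphi_0 F_i=g_{i,0}G+\Delta_i A$ and the second forces $\varphi_1 F_i=g_{i,1}G-\Delta_i B$ to vanish to order $\ge m_n^i$; conversely, if $F_i$ vanishes to that order, then $g_{i,0}G=\varphi_0 F_i-\Delta_i A$ and $g_{i,1}G=\varphi_1 F_i+\Delta_i B$ vanish to order $\ge m_n^i$. The crucial point that closes this argument is that $\varphi_0=\varphi(a,\cdot)$ and $\varphi_1=\varphi'(a,\cdot)$ cannot vanish simultaneously (otherwise the solution $\varphi(\cdot,\lambda)$ of \eqref{1s} would vanish identically, contradicting $\varphi(0,\lambda)=1$), and likewise $g_{i,0}$ and $g_{i,1}$ cannot vanish simultaneously (by the same uniqueness argument applied to $\psi_i$). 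Hence at $\lambda_0$ at least one of $\varphi_0,\varphi_1$ is a local unit, so $\mathrm{ord}_{\lambda_0}(F_i)=\min\{\mathrm{ord}_{\lambda_0}(\varphi_0 F_i),\mathrm{ord}_{\lambda_0}(\varphi_1 F_i)\}\ge m_n^i$; symmetrically, one of $g_{i,0},g_{i,1}$ is a local unit, giving $\mathrm{ord}_{\lambda_0}(G)\ge m_n^i$.

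Combining the two directions gives $\mathrm{ord}_{\lambda_0}(F_i)\ge m_n^i\iff\mathrm{ord}_{\lambda_0}(G)\ge m_n^i$ at every $\lambda_0=\mu_{i,n}$, $i=0,1$, which is exactly the required equivalence; hence the orthogonal complements of the two systems coincide and one system is complete in $\mathcal{H}$ if and only if the other is. I expect the main obstacle to be precisely the bookkeeping with multiplicities in the degenerate situations where one of $\varphi_0,\varphi_1,g_{i,0},g_{i,1}$ happens to vanish at an eigenvalue $\mu_{i,n}$; the non-simultaneous-vanishing facts, together with the additivity of orders under multiplication, are what resolve it cleanly.
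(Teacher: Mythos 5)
Your proof is correct, but it replaces the paper's key technical device with a different one. Both arguments share the same first reduction: orthogonality of a fixed $\mathbf{h}\in\mathcal{H}$ to each system is equivalent to order-$m_n^i$ vanishing at $\mu_{i,n}$ of the scalar entire functions $\langle\mathbf{h},\mathbf{U}_i(\cdot,\lambda)\rangle$ and $\langle\mathbf{h},\mathbf{V}(\cdot,\lambda)\rangle$, so the lemma amounts to the equivalence of these two families of vanishing conditions. At that point the paper invokes Proposition~\ref{A1}, i.e.\ the triangular Taylor-coefficient relations $\varphi_j^{\langle\nu\rangle}(\mu_{i,n})=\sum_{k}M_{n,k}^i g_{i,j}^{\langle\nu-k\rangle}(\mu_{i,n})$ with $M_{n,0}^i\neq0$ (proved by induction in the Appendix), and finishes with a double-sum (discrete convolution) computation transferring the vanishing of one scalar function's derivatives to the other's. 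You instead expand $\langle\mathbf{h},\mathbf{U}_i(\cdot,\lambda)\rangle=g_{i,1}A+g_{i,0}B$ and $\langle\mathbf{h},\mathbf{V}(\cdot,\lambda)\rangle=\varphi_1A+\varphi_0B$ and use the exact factorizations $\varphi_0F_i-g_{i,0}G=\Delta_iA$ and $g_{i,1}G-\varphi_1F_i=\Delta_iB$, concluding by additivity of vanishing orders and the local-unit property coming from the no-common-zero facts. The raw ingredients are identical (order-$\ge m_n^i$ vanishing of $\Delta_i$ at $\mu_{i,n}$, and that neither $(\varphi_0,\varphi_1)$ nor $(g_{i,0},g_{i,1})$ has a common zero), but your packaging is multiplicative rather than recursive: it is self-contained, avoids the appendix induction entirely, and yields directly the pointwise equivalence $\mathrm{ord}_{\mu_{i,n}}F_i\ge m_n^i\iff\mathrm{ord}_{\mu_{i,n}}G\ge m_n^i$, which is arguably more transparent. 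What the paper's route buys in exchange is that Proposition~\ref{A1} is a reusable tool: it is needed again later (e.g.\ in the proof of Lemma~\ref{l4}, where $\mathbf{U}_n^i=C_n^i\mathbf{V}_n^i$ for large $n$), so the authors get this lemma almost for free once that proposition is established. One small imprecision on your side: $\mu_{i,n}$ is a zero of $\Delta_i$ of multiplicity \emph{at least} $m_n^i$, not exactly $m_n^i$, since the subspectrum need not contain an eigenvalue with its full multiplicity; this is harmless because your argument only uses the lower bound.
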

  \begin{proof}
 Let $\mathbf{{h}}=(\overline{h_1},\overline{h_2})\in \mathcal{H}$. It is sufficient to show that
\begin{equation}\label{2.20}
\langle \mathbf{h}, \mathbf{U}_n^i\rangle=0,\quad n\ge 0,\quad i=0,1
\end{equation}
  is equivalent to
  \begin{equation}\label{2.21}
\langle \mathbf{h}, \mathbf{V}_n^i\rangle=0,\quad n\ge 0,\quad i=0,1.
\end{equation}
   Consider the functions
  \begin{equation}\label{2.1g}
    G_i(\lambda):=\left\langle \mathbf{h}(\cdot),\mathbf{U}_i(\cdot,\lambda)\right\rangle=\int_0^a \bigl( h_1(t)g_{i,1}(\lambda)s(t,\lambda)+h_2(t)g_{i,0}(\lambda)c(t,\lambda) \bigr) dt  ,\quad i=0,1,
  \end{equation}
   \begin{equation}\label{2.17}
    F(\lambda):=\left\langle \mathbf{h}(\cdot),\mathbf{V}(\cdot,\lambda)\right\rangle=\int_0^a \bigl( h_1(t)\varphi_{1}(\lambda)s(t,\lambda)+h_2(t)\varphi_{0}(\lambda)c(t,\lambda)\bigr) dt.
  \end{equation}
   By the definitions of $\mathbf{U}_n^i(t)$ and $\mathbf{V}_n^i(t)$, we know that \eqref{2.20} and \eqref{2.21} are equivalent to
  \begin{equation}\label{2.14}
 G_i^{\langle\nu\rangle}(\mu_{i,n})=0,\quad n\in \mathcal{S}_i,\quad \nu=\overline{0,m_n^i-1} ,\quad i=0,1,
\end{equation}
and
\begin{equation}\label{2.15}
F^{\langle\nu\rangle}(\mu_{i,n})=0,\quad n\in \mathcal{S}_i,\quad \nu=\overline{0,m_n^i-1} ,\quad i=0,1,
\end{equation}
respectively.
Therefore, it is sufficient to show that \eqref{2.14} is equivalent to \eqref{2.15}. Let us first prove that \eqref{2.14} implies \eqref{2.15}. Note that $\varphi_0(\lambda)$ and $\varphi_1(\lambda)$ have no common zero, and so do $g_{i,0}(\lambda)$ and $g_{i,1}(\lambda)$, $i=0,1$. From \eqref{2.0} and Proposition \ref{A1}, we have
  \begin{equation}\label{2.12}
 \varphi_j^{\langle \nu\rangle}(\mu_{i,n})=\sum_{k=0}^\nu M_{n,k}^ig_{i,j}^{\langle\nu-k\rangle}(\mu_{i,n}),\quad n\in \mathcal{S}_i,\quad \nu=\overline{0,m_n^i-1},\quad j=0,1,
\end{equation}
where $M_{n,\nu}^i$ are constants.
Using \eqref{2.1g} and \eqref{2.12}, we obtain
\begin{equation*}
  \begin{split}
 &F^{\langle \nu\rangle} (\mu_{i,n})\!=\!\sum_{j=0}^\nu \int_0^a \left[h_1(t)\varphi_1^{\langle j\rangle}(\mu_{i,n})s^{\langle\nu-j\rangle}(t,\mu_{i,n})+h_2(t)\varphi_0^{\langle j\rangle}(\mu_{i,n})c^{\langle\nu-j\rangle}(t,\mu_{i,n})\right]dt\\
 &=\!\sum_{j=0}^\nu \!\sum_{l=0}^j \!M_{n,l}^i\! \int_0^a \left[h_1(t)g_{i,1}^{\langle j-l\rangle}(\mu_{i,n})s^{\langle\nu-j\rangle}(t,\mu_{i,n})+h_2(t)g_{i,0}^{\langle j-l\rangle}(\mu_{i,n})c^{\langle\nu-j\rangle}(t,\mu_{i,n})\right]dt\\
 &=\!\sum_{l=0}^\nu \!M_{n,l}^i\!\sum_{j=0}^{\nu-l} \! \int_0^a \left[h_1(t)g_{i,1}^{\langle j\rangle}(\mu_{i,n})s^{\langle\nu-l-j\rangle}(t,\mu_{i,n})+h_2(t)g_{i,0}^{\langle j\rangle}(\mu_{i,n})c^{\langle\nu-l-j\rangle}(t,\mu_{i,n})\right]dt\\
 &=\!\sum_{l=0}^\nu \!M_{n,l}^i  G_i^{\langle\nu-l\rangle}(\mu_{i,n})=0.
  \end{split}
\end{equation*}
Similarly, one can also prove that \eqref{2.15} implies \eqref{2.14}.
  \end{proof}

\begin{lemma}\label{l3}
If the system $\{c_n^i(t)\}_{n\ge0,i=0,1} $ is complete in $L^2(0,2a)$, then $\{\mathbf{V}_n^i(t)\}_{n\ge0,i=0,1}$ is complete in $\mathcal{H}$.
\end{lemma}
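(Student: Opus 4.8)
The plan is to mimic the reduction already used in Lemma~\ref{l2}. Writing $\mathbf{h}=(\overline{h_1},\overline{h_2})\in\mathcal{H}$, the system $\{\mathbf{V}_n^i\}_{n\ge0,i=0,1}$ is complete iff the only $\mathbf{h}$ with $\langle\mathbf{h},\mathbf{V}_n^i\rangle=0$ for all $n\ge0$, $i=0,1$ is $\mathbf{h}=0$. By \eqref{2.17} and the definition \eqref{xq1}, these orthogonality relations are equivalent to $F^{\langle\nu\rangle}(\mu_{i,n})=0$ for $n\in\mathcal{S}_i$, $\nu=\overline{0,m_n^i-1}$, $i=0,1$ (this is exactly condition \eqref{2.15}), where $F(\lambda)=\varphi_1(\lambda)\int_0^a h_1(t)s(t,\lambda)\,dt+\varphi_0(\lambda)\int_0^a h_2(t)c(t,\lambda)\,dt$. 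Hence it suffices to show that, under the completeness of $\{c_n^i\}$ in $L^2(0,2a)$, these vanishing conditions force $h_1=h_2=0$.

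The key step is to rewrite $F$ as a cosine transform over the doubled interval $(0,2a)$. Substituting the transformation-operator representations \eqref{2.5}--\eqref{2.6} of $\varphi_0$ and $\varphi_1$ into $F$ and applying the product-to-sum identities $\sin\rho a\sin\rho t=\tfrac12[\cos\rho(a-t)-\cos\rho(a+t)]$ and $\cos\rho a\cos\rho t=\tfrac12[\cos\rho(a+t)+\cos\rho(a-t)]$ (using $\tfrac{\sin\rho s}{\rho}=\int_0^s\cos\rho r\,dr$ to absorb the mixed $\tfrac{\sin}{\rho}$ factors), I expect to obtain
\[
F(\lambda)=\int_0^{2a} w(t)\cos\rho t\, dt,\qquad w\in L^2(0,2a),
\]
where $w$ depends linearly on $h_1,h_2$ and on the fixed Cauchy data $K_1,K_2,\omega_1$. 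Since $c^{\langle\nu\rangle}(t,\mu_{i,n})=c_{n+\nu}^i(t)$ by \eqref{2.26}, differentiating under the integral sign converts the vanishing conditions into $\int_0^{2a}w(t)c_n^i(t)\,dt=0$ for all $n\ge0$, $i=0,1$; that is, $\overline{w}$ is orthogonal in $L^2(0,2a)$ to every $c_n^i$. Completeness of $\{c_n^i\}$ then gives $w=0$, and injectivity of the cosine transform yields $F\equiv0$.

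It remains to deduce $\mathbf{h}=0$ from $F\equiv0$. Set $A(\lambda)=\int_0^a h_1(t)s(t,\lambda)\,dt$ and $B(\lambda)=\int_0^a h_2(t)c(t,\lambda)\,dt$, so that $\varphi_1 A\equiv-\varphi_0 B$. Because $\varphi_0(\lambda)=\varphi(a,\lambda)$ and $\varphi_1(\lambda)=\varphi'(a,\lambda)$ have no common zeros, every zero of $\varphi_0$ is a zero of $A$ (of at least the same multiplicity), so $R:=A/\varphi_0$ is entire and $B=-\varphi_1 R$. Exponential-type bookkeeping ($\varphi_0,\varphi_1$ are of type $a$ while $A,B$ are of type $\le a$) together with the real-axis estimates $A(\rho)=O(|\rho|^{-1})$ and $|\varphi_0(\rho)|\ge c\,e^{a|\mathrm{Im}\rho|}$ away from its zeros show that $R$ is bounded on $\mathbb{C}$ and of exponential type $0$; by Liouville's theorem $R$ is constant, and since $R\to0$ on the real axis, $R\equiv0$. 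Thus $A\equiv B\equiv0$, whence $h_1=h_2=0$, proving completeness of $\{\mathbf{V}_n^i\}$.

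The main obstacle is the cosine-transform representation of $F$ in the second paragraph: one must verify that no bare $\cos\rho s_0$ (point-mass) terms survive the expansion and that all convolution corrections generated by $K_1,K_2$ and $\omega_1$ remain square-integrable on $(0,2a)$, so that genuinely $w\in L^2(0,2a)$ and the orthogonality can be tested against the $L^2(0,2a)$-completeness hypothesis. The concluding Liouville/growth argument is standard but requires the correct type and real-axis bounds; alternatively, one may track the principal part of $w$, which reflects $h_2-h_1$ on $(0,a)$ and $h_1+h_2$ on $(a,2a)$ modulo Volterra corrections, and invert directly to reach $h_1=h_2=0$.
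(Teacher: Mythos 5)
Your proposal is correct, and its core coincides with the paper's own proof: the reduction of completeness to the vanishing conditions \eqref{2.15} for $F$, the representation $F(\lambda)=\int_0^{2a}w(t)\cos\rho t\,dt$ with $w\in L^2(0,2a)$, and the conclusion $F\equiv 0$ from the completeness of $\{c_n^i\}_{n\ge0,i=0,1}$. Concerning the "main obstacle" you flag: the paper handles it exactly as you hope, by splitting $F$ into the leading part $\int_0^{2a}b_0(t)\cos\rho t\,dt$ (your $h_1\pm h_2$ reflection terms) plus an even entire remainder $F_1(\rho)$ of exponential type $\le 2a$ with $F_1(\rho)=O(|\rho|^{-1})$ on the real axis, and then invoking the Paley--Wiener theorem to realize $F_1$ as the cosine transform of some $b_1\in L^2(0,2a)$ (see \eqref{2.24}--\eqref{2.25}); no point masses survive, so this step goes through. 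Where you genuinely diverge is the final step, deducing $h_1=h_2=0$ from $F\equiv0$. The paper invokes Proposition~\ref{A2}: it works at the zeros $\{z_n\}$ of $\varphi_1$ (which obey $\sqrt{z_n}=n\pi/a+\kappa_n$, $\{\kappa_n\}\in l^2$, and at which $\varphi_0(z_n)\ne0$), uses the Riesz-basis property of $\{c^{\langle\nu\rangle}(t,z_n)\}$ in $L^2(0,a)$ to force $h_2=0$, and then the nontriviality of the entire function $\varphi_1$ to get $h_1=0$. You instead divide: since $\varphi_0,\varphi_1$ have no common zeros, $R=A/\varphi_0$ is entire with $B=-\varphi_1R$, and a growth argument kills $R$. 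This works and is a legitimate alternative; it avoids the basis theory at the zeros of $\varphi_1$ and their asymptotics, but it requires the standard lower bound $|\varphi_0(\rho^2)|\ge c\,e^{a|\mathrm{Im}\rho|}$ off small disks around its zeros together with a maximum-modulus pass over expanding circles $|\rho|=r_n$ avoiding those disks, which gives $|R(\lambda_0)|\le C/r_n\to0$ for each fixed $\lambda_0$, hence $R\equiv0$ directly. Note that the decay of $R$ must be extracted on such circles, not literally "on the real axis" as you wrote, since the zeros of $\varphi_0$ lie asymptotically near the real axis; this is the only imprecision in your write-up and it is harmless once the circle argument is in place.
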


\begin{proof}
Substituting \eqref{2.5} and \eqref{2.6} into \eqref{2.13}, we get
\begin{equation}\label{2.22}
V_1(t,\lambda)=v_1(t,\lambda)+O\left(\frac{e^{2a|{\rm Im}\rho|}}{|\rho|}\right),\quad  V_{2}(t,\lambda)=v_2(t,\lambda)+O\left(\frac{e^{2a|{\rm Im}\rho|}}{|\rho|}\right),\quad |\rho|\to\infty,
\end{equation}
where
\begin{equation}\label{2.23}
v_1(t,\lambda)=  \frac{1}{2}[\cos \rho (a+t)-\cos \rho (a-t)],\quad v_2(t,\lambda)=  \frac{1}{2}[\cos \rho (a-t)+\cos \rho (a+t)].
\end{equation}
Substituting \eqref{2.5} and \eqref{2.6} into \eqref{2.17} and taking \eqref{2.22} and \eqref{2.23}  into account, we obtain
\begin{equation}\label{2.24}
F(\lambda)=\int_0^{2a} b_0(t)\cos \rho tdt+F_1(\rho),
\end{equation}
where
\begin{equation*}
b_0(t)=\left\{\begin{split}
&\frac{h_1(a-t)+h_2(a-t)}{2}, \quad  0<t<a, \\
&\frac{h_2(t-a)-h_1(t-a)}{2}, \quad a<t<2a,
\end{split}\right.
\end{equation*}
and the function $F_1(\rho)$ is an even and entire function of exponential type $\le 2a$, moreover,
\begin{equation*}
F_1(\rho)=O(|\rho|^{-1}),\quad |\rho|\to\infty,\quad \rho\in \mathbb{R}.
\end{equation*}
It follows from the Paley-Wiener Theorem  and the evenness of $F_1(\rho)$ that there exists $b_1(t)$ in $L^2(0,2a)$ such that
\begin{equation}\label{2.25}
F(\lambda)=\int_0^{2a} [b_0(t)+b_1(t)]\cos \rho tdt.
\end{equation}
	
Let $\mathbf{h}=(\overline{h_1},\overline{h_2})\in \mathcal{H}$ such that \eqref{2.21} holds, or equivalently, \eqref{2.15} holds. From \eqref{2.25}, \eqref{2.26} and the completeness of $\{c_n^i(t)\}_{n\ge0,i=0,1} $, we get that $b_0(t)+b_1(t)=0$ in $L^2(0,2a)$. Therefore, $F(\lambda)\equiv0$. Using \eqref{2.17} and Proposition \ref{A2} in Appendix, we conclude that $h_2(t)=0$ and $h_1(t)=0$ in $L^2(0,a)$.
\end{proof}

\begin{proof}[Proof of Theorem \ref{th2s}]
Using Lemmas  \ref{l2} and \ref{l3}, together with Theorem \ref{th2}, we immediately finish the proof of  Theorem \ref{th2s}.
\end{proof}

In the end of this section, let us prove the following lemma, which indicates that the Riesz-basicity $\mathcal{H}$ for the functional sequence $\{\sqrt{(|\mu_{0,n}|+1)}\mathbf{U}_n^0(t)\}_{n\ge0}\cup \{\mathbf{U}_n^1(t)\}_{n\ge0}$, constructed in \eqref{xq}, is reasonable. This lemma is also useful in the proof of Corollary \ref{cor5.1}.

\begin{lemma}\label{l4}
If the system $\{c_n^i(t)\}_{n\ge0,i=0,1}$ is a Riesz basis in $L^2(0,2a)$ then: (i) $\{\mathbf{V}_n^i(t)\}_{n\ge0,i=0,1}$ is a Riesz basis in $\mathcal{H}$; (ii) $\{\sqrt{(|\mu_{0,n}|+1)}\mathbf{U}_n^0(t)\}_{n\ge0}\cup \{\mathbf{U}_n^1(t)\}_{n\ge0}$ is a Riesz basis in $\mathcal{H}$.
\end{lemma}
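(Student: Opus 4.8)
The plan is to establish (i) first and then derive (ii) from it together with the cluster relations already used in the proof of Lemma~\ref{l2}. For (i), I would reuse the representation of the generating function obtained inside the proof of Lemma~\ref{l3}: for $\mathbf{h}=(\overline{h_1},\overline{h_2})\in\mathcal{H}$ the function $F(\lambda)=\langle\mathbf{h},\mathbf{V}(\cdot,\lambda)\rangle$ satisfies \eqref{2.25}, i.e. $F(\lambda)=\int_0^{2a}[b_0(t)+b_1(t)]\cos\rho t\,dt$, where $b_0$ is the explicit linear image of $\mathbf{h}$ from that proof and $b_1\in L^2(0,2a)$ is the Paley--Wiener image of the $O(|\rho|^{-1})$ remainder $F_1$. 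Differentiating in $\lambda$ and evaluating at $\mu_{i,n}$, the definition \eqref{2.26} gives $\langle\mathbf{h},\mathbf{V}_n^i\rangle_{\mathcal H}=\int_0^{2a}[b_0(t)+b_1(t)]\,c_n^i(t)\,dt$ for all $n\ge0$, $i=0,1$. Introducing the bounded operator $\mathbf{B}\colon\mathcal H\to L^2(0,2a)$, $\mathbf{B}\mathbf{h}:=b_0+b_1$, this reads $\langle\mathbf{h},\mathbf{V}_n^i\rangle=\langle\mathbf{B}\mathbf{h},c_n^i\rangle$ (up to complex conjugation, which is immaterial for the Riesz-basis property). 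Hence $\{\mathbf{V}_n^i\}$ is the image of $\{c_n^i\}$ under the (conjugate-linear) operator adjoint to $\mathbf{B}$, and (i) reduces to showing that $\mathbf{B}$ is a bounded invertible operator, because a bounded invertible map sends a Riesz basis to a Riesz basis.

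To prove that $\mathbf{B}$ is an isomorphism I would split $\mathbf{B}=\mathcal{J}+\mathcal{R}$, with $\mathcal J\mathbf{h}:=b_0$ and $\mathcal R\mathbf{h}:=b_1$. The explicit formula for $b_0$ from Lemma~\ref{l3} shows that $\mathcal J$ is invertible: inverting the substitutions $t=a\mp s$ yields $h_1(s)=b_0(a-s)-b_0(a+s)$ and $h_2(s)=b_0(a-s)+b_0(a+s)$, and a direct computation gives $\|\mathcal J\mathbf h\|_{L^2(0,2a)}^2=\tfrac12\|\mathbf h\|_{\mathcal H}^2$, so $\mathcal J$ is a scaled isomorphism. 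The remainder operator $\mathcal R$ is generated by the $O(|\rho|^{-1})$ terms in \eqref{2.22}, which carry the Cauchy data $K_1,K_2\in L^2(0,a)$, $\omega_1$ together with a smoothing factor $\rho^{-1}$; I expect $\mathcal R$ to be an integral operator with an $L^2$ kernel, hence Hilbert--Schmidt and in particular compact. Then $\mathbf B=\mathcal J(I+\mathcal J^{-1}\mathcal R)$ is Fredholm of index $0$. Finally, $\mathbf B\mathbf h=0$ means $b_0+b_1=0$, i.e. $F\equiv0$, which by \eqref{2.17} and Proposition~\ref{A2} forces $\mathbf h=0$; thus $\mathbf B$ is injective, and an injective Fredholm operator of index $0$ is invertible. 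This completes (i).

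For (ii) I would transport the Riesz-basis property from $\{\mathbf V_n^i\}$ to the weighted $\mathbf U$-system. The relation \eqref{2.12}, handled exactly as in the proof of Lemma~\ref{l2}, yields the cluster identity $\mathbf V_{n+\nu}^i=\sum_{l=0}^{\nu}M_{n,l}^i\,\mathbf U_{n+\nu-l}^i$ with $M_{n,0}^i\ne0$, so on each finite cluster the two systems are related by an invertible lower-triangular matrix. On the tail of simple eigenvalues ($n\ge n_i$) this collapses to $\mathbf V_n^i=M_{n,0}^i\mathbf U_n^i$. From \eqref{jg9}--\eqref{2.23} one computes cleanly $\|\mathbf V_n^i\|_{\mathcal H}^2\to \tfrac{a}{2}$ (the oscillatory cross terms cancel between the two components of $\mathbf V$), while \eqref{uni} gives $\|\mathbf U_n^1\|_{\mathcal H}\asymp1$ and $\|\mathbf U_n^0\|_{\mathcal H}\asymp|\mu_{0,n}|^{-1/2}$. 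Taking norms in the scalar relation gives $|M_{n,0}^1|\asymp1$ and $|M_{n,0}^0|\asymp\sqrt{|\mu_{0,n}|+1}$. Consequently, setting $\mathbf W_n^0:=\sqrt{|\mu_{0,n}|+1}\,\mathbf U_n^0$ and $\mathbf W_n^1:=\mathbf U_n^1$, the scalars $\gamma_n^1:=1/M_{n,0}^1$ and $\gamma_n^0:=\sqrt{|\mu_{0,n}|+1}/M_{n,0}^0$ relating $\mathbf W_n^i$ to $\mathbf V_n^i$ are bounded above and below. Since rescaling the elements of a Riesz basis by factors bounded away from $0$ and $\infty$ preserves the Riesz-basis property, and the remaining change on the finitely many nontrivial clusters is a fixed invertible transformation, the passage $\{\mathbf V_n^i\}\mapsto\{\mathbf W_n^i\}$ is realized by a bounded invertible operator on $\mathcal H$. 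Hence $\{\mathbf W_n^i\}$ is a Riesz basis, which is (ii).

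The main obstacle is the analysis in (i): verifying that the remainder operator $\mathcal R$ is compact, so that the Fredholm alternative applies, requires writing $b_1$ explicitly in terms of the Cauchy data and identifying the resulting kernel as square-integrable, and the injectivity step must be correctly tied to Proposition~\ref{A2}. By contrast, the estimates in (ii) are essentially routine once $\|\mathbf V_n^i\|_{\mathcal H}\to a/2$ and \eqref{uni} are in hand; the only point to check there is the lower bound on $\|\mathbf U_n^i\|_{\mathcal H}$, which is already guaranteed by the boundedness of $\{c_{i,n}\}$ in \eqref{uni} by a constant strictly less than $1$, ensuring the weights $\gamma_n^i$ do not degenerate along a subsequence.
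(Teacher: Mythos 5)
Your argument is sound, and its two halves compare differently with the paper. Part (ii) is essentially the paper's own proof: the paper likewise uses Proposition~\ref{A1} to get the scalar relation $\mathbf{U}_n^i=C_n^i\mathbf{V}_n^i$ on the tail of simple eigenvalues, combines the norm limit $\|\mathbf{V}_n^i\|_{\mathcal{H}}^2\to a/2$ with \eqref{uni} to see that the rescaling factors are bounded above and below (the point about $|c_{i,n}|$ being bounded by a constant less than $1$ is exactly why \eqref{uni} gives a two-sided bound), and disposes of the finitely many nontrivial clusters; your block-triangular invertible transformation with diagonal entries $M_{n,0}^i\ne0$ is an equivalent way of doing what the paper does via completeness (from (i) and Lemma~\ref{l2}) plus quadratic closeness. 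Part (i) is where you genuinely diverge. The paper replaces the finitely many repeated or initial values by auxiliary distinct numbers $\alpha_n$, invokes Proposition~\ref{A3} to conclude that $\{(v_1(t,\alpha_n^2),v_2(t,\alpha_n^2))\}_{n\ge0}$ is a Riesz basis in $\mathcal{H}$ --- the engine of that proposition is precisely your identity $\|\mathcal{J}\mathbf{h}\|_{L^2(0,2a)}^2=\tfrac12\|\mathbf{h}\|_{\mathcal{H}}^2$, see \eqref{a12} --- and then transfers the Riesz-basis property to $\{\mathbf{V}_n^i\}$ by quadratic closeness (the deviations in \eqref{2.22} are $O(|\rho_n|^{-1})$, hence square-summable) together with completeness from Lemma~\ref{l3}, via Proposition 1.8.5 of \cite{FYU1}. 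You instead realize $\mathbf{V}_n^i=\mathbf{B}^*c_n^i$ for an explicit operator $\mathbf{B}=\mathcal{J}+\mathcal{R}$ and prove invertibility of $\mathbf{B}$ by a Fredholm perturbation argument. Both routes are valid: the paper's avoids any operator theory beyond the Bari-type theorem and never needs compactness, while yours handles multiple eigenvalues with no auxiliary sequence at all (the identity $F^{\langle\nu\rangle}(\mu_{i,n})=\int_0^{2a}(b_0+b_1)(t)\,c_{n+\nu}^i(t)\,dt$ covers the derivative vectors automatically) and exhibits the intertwining isomorphism explicitly. Finally, the step you flag as the main obstacle does go through: substituting \eqref{2.5}--\eqref{2.6} into \eqref{2.13} and writing $\rho^{-1}\sin\rho c=\int_0^{c}\cos\rho s\,ds$ for $c>0$, the remainder $b_1$ is obtained from $(h_1,h_2)$ by integral operators whose kernels are bounded, compactly supported functions built from indicator functions, $\omega_1$, and the Cauchy data $K_1,K_2$; hence $\mathcal{R}$ is Hilbert--Schmidt, $\mathbf{B}$ is bounded, and your injectivity step (Fourier uniqueness plus Proposition~\ref{A2}) is exactly the paper's own concluding argument in the proof of Lemma~\ref{l3}.
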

\begin{proof}
(i) Note that $m_{n}^i=1$ for $n\ge n_i$ for some large $n_i$. Let $\{\alpha_n\}_{n\ge 0}$ be  the numbers such that $\alpha_k\ne\alpha_l $ and $\alpha_k\ne\overline{\alpha_l} $ for all $k\ne l$ and
$$\{\alpha_n\}_{n\ge n_0+n_1}=\{\sqrt{\mu_{0,n}}\}_{n\ge n_0}\cup\{\sqrt{\mu_{1,n}}\}_{n\ge n_1}.$$
Since  $\{c_n^i(t)\}_{n\ge0,i=0,1}$  is a Riesz basis in $L^2(0,2a)$, then $\{\cos \alpha_nt\}_{n\ge0}$ is also a Riesz basis in $L^2(0,2a)$. From Proposition \ref{A3} in Appendix, we have that $\{(v_1(t,\alpha_n^2),v_2(t,\alpha_n^2))\}_{n\ge0}$ is a Riesz basis in $\mathcal{H}$, where $v_j(t,\lambda) \ (j=1,2)$ are defined in \eqref{2.23}. By \eqref{2.22} and \eqref{xq1}, we know that $\{\mathbf{V}_n^i(t)\}_{n\ge0,i=0,1}$ is quadratically close to $\{(v_1(t,\alpha_n^2),v_2(t,\alpha_n^2))\}_{n\ge0}$. Using Proposition 1.8.5 in \cite{FYU1}, together with Lemma \ref{l3}, we conclude that $\{\mathbf{V}_n^i(t)\}_{n\ge0,i=0,1}$ is a Riesz basis in $\mathcal{H}$.

(ii) Using Proposition \ref{A1}, together with the definitions of $\mathbf{U}_n^i$ and $\mathbf{V}_n^i$, we obtain that $\mathbf{U}_n^i(t)=C_{n}^i\mathbf{V}_n^i(t)$ for $n\ge n_i$, where $C_{n}^i$ are nonzero constants. Note that $\{\mathbf{U}_n^i(t)\}_{n\ge0,i=0,1}$ is complete in $\mathcal{H}$ by (i) and Lemma \ref{l2}. In view of \eqref{uni}, we get the conclusion (ii).
\end{proof}

\section{Proofs of the local solvability and stability}
In this section, we prove Theorem \ref{th4.1} as well as Corollaries \ref{th5.1} and \ref{cor5.1}. For $i = 0, 1$, let  $\{\mu_{i,n}\}_{n\ge0}$ be a fixed subspectrum  of the problem $B_i$.  We shall use the data $\{\tilde{\mu}_{i,n}\}_{n\ge0}$, $\tilde{q}_2$, $\tilde{H}_1$, $a_1$, $a_2$ and $\omega_1$ to construct $\tilde{q}_1$ and $\tilde{h}$ by Algorithm~\ref{alg:1}. We agree that, if a certain
symbol $\delta_i$ denotes an object related to the problem $B_i$, then
$\tilde{\delta}_i$ will denote an analogous object related to the sequence $\{\tilde{\mu}_{i,n}\}_{n\ge0}$,  $\tilde{q}_2$ and $\tilde{H}_1$. In Theorem  \ref{th4.1}, we assume $H_1=\tilde{H}_1$.
The notation $C$ may stand for different positive constants.

Let $\tilde{\psi}_i(x,\lambda)$ be the solution of the equation $-y''+\tilde{q}_2(x)y=\lambda y$ under the initial conditions
$$ \tilde{\psi}_0(0,\lambda)=0,\; \tilde{\psi}_0'(0,\lambda)=1,\quad \tilde{\psi}_1(0,\lambda)=1,\;\tilde{\psi}_1'(0,\lambda)=\tilde{H}_1.$$
\begin{lemma}\label{l4.1}
(i) If \eqref{4.0} is fulfilled for some $\varepsilon \le 1$, then there exist functions $W_0^i\in L^2(0,d_2)\ (i=0,1)$ such that
\begin{equation}\label{4.3}
\tilde{\psi}_0(d_2,\lambda)-{\psi}_0(d_2,\lambda)=\frac{1}{\rho^2}\int_0^{d_2}W_0^0(t)\cos \rho tdt,
\end{equation}
\begin{equation}\label{4.3s}
\tilde{\psi}_0'(d_2,\lambda)-{\psi}_0'(d_2,\lambda)=\frac{1}{\rho}\int_0^{d_2}W_0^1(t)\sin\rho tdt.
\end{equation}
Moreover,  $\|W_0^i\|_{L^2(0,d_2)}\le CQ$, $i=0,1$, where $Q$ is defined in \eqref{4.0} and the constant $C>0$ depends only on $\|q_2\|_{L^2(0,d_2)}$.

(ii) If \eqref{5.0} is fulfilled for some $\varepsilon \le 1$, then there exist functions  $W_1^i\in L^2(0,d_2)\ (i=0,1)$ such that
\begin{equation}\label{4.3h}
\tilde{\psi}_1(d_2,\lambda)-{\psi}_1(d_2,\lambda)=\frac{1}{\rho}\int_0^{d_2}W_1^0(t)\sin \rho tdt,
\end{equation}
\begin{equation}\label{4.3hs}
\tilde{\psi}_1'(d_2,\lambda)-{\psi}_1'(d_2,\lambda)=\int_0^{d_2}W_1^1(t)\cos\rho tdt.
\end{equation}
Moreover, $\|W_1^i\|_{L^2(0,d_2)}\le CQ_1$, $i=0,1$, where $Q_1$ is defined in \eqref{5.0} and the constant $C>0$ depends only on $\|q_2\|_{L^2(0,d_2)}$ and $|H_1|$.
\end{lemma}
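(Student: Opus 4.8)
The plan is to reduce each of the four identities to a Volterra integral equation for the relevant difference of solutions and to read off the stated kernel representation from the leading term, the crucial point being that the normalizations \eqref{4.0} and \eqref{5.0} are exactly what annihilate the non-decaying boundary contributions. For part (i) I would set $p:=\tilde q_2-q_2$ and $u(x,\lambda):=\tilde\psi_0(x,\lambda)-\psi_0(x,\lambda)$. Writing $\psi_0,\tilde\psi_0$ through the variation-of-parameters Volterra equation with free solution $\tfrac{\sin\rho x}{\rho}$ and subtracting, $u$ solves $u(x,\lambda)=\int_0^x\tfrac{\sin\rho(x-t)}{\rho}\bigl[q_2(t)u(t,\lambda)+p(t)\tilde\psi_0(t,\lambda)\bigr]\,dt$ with $u(0,\lambda)=u'(0,\lambda)=0$. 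Since $\tilde\psi_0(t,\lambda)=\tfrac{\sin\rho t}{\rho}+O(|\rho|^{-2}e^{|\mathrm{Im}\rho|t})$, the leading contribution to $u(d_2,\lambda)$ is $\tfrac{1}{\rho^2}\int_0^{d_2}p(t)\sin\rho(d_2-t)\sin\rho t\,dt$, and using $2\sin\rho(d_2-t)\sin\rho t=\cos\rho(d_2-2t)-\cos\rho d_2$ this splits into an interior part carrying $\cos\rho(d_2-2t)$ and a boundary part $-\tfrac{\cos\rho d_2}{2\rho^2}\int_0^{d_2}p\,dt$.

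Here the hypothesis $\int_0^{d_2}\tilde q_2=\int_0^{d_2}q_2$ in \eqref{4.0} kills precisely the $\cos\rho d_2$ boundary term, which is the only piece of the leading part that does not admit an $L^2(0,d_2)$ cosine-kernel representation (it would correspond to a Dirac mass at $t=d_2$). After the substitutions $s=d_2-2t$ and $s=2t-d_2$ on the two halves of $(0,d_2)$ and the evenness of the cosine, the surviving term becomes $\tfrac{1}{\rho^2}\int_0^{d_2}W_0^0(s)\cos\rho s\,ds$ with $W_0^0(s)$ a rescaling of $p\bigl(\tfrac{d_2\mp s}{2}\bigr)$, so that $\|W_0^0\|_{L^2}\le C\|p\|_{L^2}=CQ$ for the leading part; \eqref{4.3s} follows identically by differentiating the Volterra equation in $x$ (free solution $\cos\rho(x-t)$), where $2\cos\rho(d_2-t)\sin\rho t=\sin\rho d_2-\sin\rho(d_2-2t)$ and \eqref{4.0} again removes the $\sin\rho d_2$ term, leaving the $\tfrac1\rho\int W_0^1\sin\rho t$ form. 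Equivalently, one may read these off from \eqref{jg3}--\eqref{jg4}: since $\psi_0(d_2,\lambda)=-a_1 g_{0,0}(\lambda)$ and $\omega_0=\tfrac12\int_0^{d_2}q_2$ is preserved by \eqref{4.0}, the $\tfrac{\cos\rho d_2}{\rho^2}$ terms cancel in the difference and only the Paley--Wiener remainders survive, whose even parts yield $W_0^0,W_0^1$.

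For part (ii), with $v:=\tilde\psi_1-\psi_1$ and $\psi_1(0)=1,\psi_1'(0)=H$, the analogous Volterra equation carries an extra free term $(\tilde H-H)\tfrac{\sin\rho x}{\rho}$ and now $\tilde\psi_1(t,\lambda)=\cos\rho t+O(|\rho|^{-1}e^{|\mathrm{Im}\rho|t})$. Using $2\sin\rho(d_2-t)\cos\rho t=\sin\rho d_2+\sin\rho(d_2-2t)$, the leading integral produces $\tfrac{1}{2\rho}\sin\rho d_2\int_0^{d_2}p$, which combines with the boundary term; condition \eqref{5.0}, that is $\tilde H-H=-\tfrac12\int_0^{d_2}p$, makes these cancel exactly and leaves the $\tfrac1\rho\int W_1^0\sin\rho t$ form of \eqref{4.3h}. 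The same mechanism with $2\cos\rho(d_2-t)\cos\rho t=\cos\rho d_2+\cos\rho(d_2-2t)$ and $(\tilde H-H)\cos\rho d_2$ gives \eqref{4.3hs}, the absence of a $\tfrac1\rho$ prefactor reflecting that $\psi_1'$ is even in $\rho$ and of order $O(1)$. In each case the constant acquires its dependence on $|H_1|$ through $\tilde\psi_1$, while $|\tilde H_1-H_1|\le Q_1\le 1$ keeps $|\tilde H_1|$ controlled by $|H_1|$.

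The main obstacle is to make all this rigorous, i.e. to handle the higher-order parts of $\tilde\psi_0,\tilde\psi_1$ and the self-referential terms $\int_0^x\tfrac{\sin\rho(x-t)}{\rho}q_2u\,dt$ (resp. $q_2v$). I would solve the Volterra equation by Neumann series, writing $u=(I-\mathcal{V}_{q_2})^{-1}F$ with $\mathcal{V}_{q_2}f(x)=\int_0^x\tfrac{\sin\rho(x-t)}{\rho}q_2(t)f(t)\,dt$ and forcing $F=\mathcal{V}[p\tilde\psi_0]$, where $\mathcal{V}_{q_2}$ is a bounded Volterra operator whose relevant norms depend only on $\|q_2\|_{L^2}$ (and, for $v$, on $|H_1|$). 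The delicate step is to show that every term of the series preserves the structural form $\rho^{-m}\int_0^{d_2}(\cdot)\cos\rho t\,dt$ (resp. $\sin\rho t$) with an $L^2(0,d_2)$ kernel and that the accumulated kernel obeys $\|W\|_{L^2}\le CQ$ (resp. $CQ_1$). This is cleanest via the local Lipschitz dependence of the transformation-operator kernel $\partial_t K[q_2]$ on $q_2$ in $L^2$ (equivalently, of the functions $P_{0,j},P_{1,j}$ in \eqref{jg1}--\eqref{jg4}), established from the Goursat integral equation for the kernel by successive approximations; the boundary normalizations $(\tilde K-K)(d_2,d_2)=\tilde\omega_0-\omega_0=0$ and $(\tilde K-K)(d_2,0)=0$ then guarantee that the integration by parts producing the $\cos\rho t$ kernel leaves no non-decaying boundary contribution, which is exactly what \eqref{4.0} and \eqref{5.0} enforce.
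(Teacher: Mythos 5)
Your proposal is correct in its essential mechanism and, once made rigorous, lands in the same circle of ideas as the paper, but via a different decomposition. The paper's proof writes $\tilde{\psi}_0=\psi_0+\int_0^x M^0(x,t)\,\psi_0(t,\lambda)\,dt$ with a \emph{perturbed-to-perturbed} transmutation kernel $M^0$ which is itself of size $Q$ (quoting $\max|M^0|,\ \|M^0_x\|_{L^2},\ \|M^0_t\|_{L^2}\le CQ$ from Proposition 2.1 of \cite{XMY}, and the analogous bounds for $N^0$ from \cite{GMXA}), then expands $\psi_0$ through the free solution by a second kernel $M^1$ and integrates by parts; the normalizations \eqref{4.0}, \eqref{5.0} enter exactly as you identified, through $M^0(d_2,d_2)=\frac12\int_0^{d_2}(\tilde q_2-q_2)\,dt=0$ and $N^0(d_2,d_2)=(\tilde H_1-H_1)+\frac12\int_0^{d_2}(\tilde q_2-q_2)\,dt=0$, which annihilate the non-decaying boundary terms, and $W_0^0$ comes out as the composite expression $M_t^0(d_2,t)-M^0(d_2,t)M^1(t,t)+\int_t^{d_2}M^0(d_2,s)M_t^1(s,t)\,ds$. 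Your ``cleanest'' route instead compares the two \emph{free-to-perturbed} kernels $K[\tilde q_2]$ and $K[q_2]$, giving the simpler formulas $W_0^0=\partial_t\bigl(\tilde K-K\bigr)(d_2,\cdot)$, etc.; the price is that you need the $L^2$-Lipschitz dependence on the potential of \emph{both} $\partial_t K[q_2]$ and $\partial_x K[q_2]$ (you mention only $\partial_t$, but \eqref{4.3s} and \eqref{4.3hs} require the $x$-derivative of the kernel as well), a standard successive-approximation fact for the Goursat problem, but one you would have to prove or locate, whereas the paper can quote its kernel bounds verbatim. Your preliminary Volterra/Neumann-series computation is a correct and illuminating way of seeing why \eqref{4.0} and \eqref{5.0} are precisely the right normalizations — the leading terms, the product-to-sum cancellations of $\cos\rho d_2$ and $\sin\rho d_2$, and the change of variables all check out — but, as you yourself note, it does not by itself deliver the structural form of the higher-order terms with kernels of size $O(Q)$; the transmutation-operator formulation is what closes that gap, in your version just as in the paper's.
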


\begin{proof}
According to the theory of transformation operators \cite{Lev,XMY}, there are functions $M^0(x,t)$ and $M^1(x,t)$, having the first partial derivatives, such that
\begin{equation}\label{4.5}
  \tilde{\psi}_0(x,\lambda)={\psi}_0(x,\lambda)+\int_0^x M^0(x,t){\psi}_0(t,\lambda)dt,\quad {\psi}_0(x,\lambda)=\frac{\sin \rho x}{\rho}+\int_0^x M^1(x,t)\frac{\sin \rho t}{\rho}dt.
\end{equation}
Moreover, $M^i(x,t)$ $(i=0,1)$ satisfy that $M_{x}^i(x,\cdot),M_{t}^i(x,\cdot)\in L^2(0,x)$ for each fixed $x\in[0,d_2]$ and
\begin{equation}\label{jm1}
M^0(x,x)=\frac{1}{2}\int_0^x [\tilde{q}_2(t)-q_2(t)]dt,\ M^1(x,x)=\frac{1}{2}\int_0^x q_2(t)dt,\ M^i(x,0)=0,
\end{equation}
 Substituting the second equation in \eqref{4.5} into the first one, integrating \eqref{4.5} by parts, and taking \eqref{jm1} and the first equation in \eqref{4.0} into account, we obtain
\begin{align}\label{jm2}
\notag \tilde{\psi}_0(d_2,\lambda)-{\psi}_0(d_2,\lambda)=&\frac{1}{\rho^2}\int_0^{d_2}M^0(d_2,x)\left[-M^1(x,x)\cos \rho x+\int_0^x M_t^1(x,t)\cos \rho tdt\right]dx\\
\notag&+\frac{1}{\rho^2}\int_0^{d_2}M_t^0(d_2,t)\cos \rho tdt\\
=&\frac{1}{\rho^2}\int_0^{d_2}W_0^0(t)\cos \rho tdt,
\end{align}
with
\begin{equation}\label{jm3}
 W_0^0(t)=M_t^0(d_2,t)-M^0(d_2,t)M^1(t,t)+\int_t^{d_2} M^0(d_2,s) M_t^1(s,t)ds.
\end{equation}
Using the estimates for $M^i(x,t)$ (see, e.g., Proposition 2.1 in \cite{XMY}), we have that, for each fixed $x\in[0,d_2]$, there hold
\begin{equation}\label{4.6}
  \max_{x\ge t\ge 0}|M^0(x,t)|\le C  Q,\; \|M_x^0(x,\cdot)\|_{L^2(0,x)}\le C  Q,\; \|M_t^0(x,\cdot)\|_{L^2(0,x)}\le C  Q,
\end{equation}
where the positive constant $C$  depends only on the sum of $\|q_2\|_{L^2(0,d_2)}$ and $\|\tilde{q}_2\|_{L^2(0,d_2)}$. Note that
$\|\tilde{q}_2\|_{L^2(0,d_2)}\le \|q_2\|_{L^2(0,d_2)}+Q$ and, by \eqref{4.0}, $Q \le \varepsilon \le 1$. Therefore, we have that the constant $C$ depends only on  $\|q_2\|_{L^2(0,d_2)}$.
Using \eqref{4.6} in \eqref{jm3}, we get that $\|W_0^0\|_{{L^2(0,d_2)}}\le C Q$.
Similarly, using \eqref{4.5}  and  \eqref{4.6}, we arrive at \eqref{4.3s} with
\begin{equation}\label{jm9}
 W_0^1(t)=M_x^0(d_2,t)+\int_t^{d_2} M_x^0(d_2,s) M^1(s,t)ds,\quad \|W_0^1\|_{{L^2(0,d_2)}}\le C Q.
\end{equation}

 Note that
there are also functions $N^0(x,t)$ and $N^1(x,t)$, having the first partial derivatives, such that
\begin{equation}\label{4.5s}
  \tilde{\psi}_1(x,\lambda)={\psi}_1(x,\lambda)+\int_0^x N^0(x,t){\psi}_1(t,\lambda)dt,\quad {\psi}_1(x,\lambda)=\cos \rho x+\int_0^x N^1(x,t)\cos \rho tdt.
\end{equation}
Moreover, $N^i(x,t)$ $(i=0,1)$ satisfy $N_{x}^i(x,\cdot),N_{t}^i(x,\cdot)\in L^2(0,x)$ for each fixed $x\in[0,d_2]$ and
\begin{equation}\label{jm1s}
N^0(x,x)=\tilde{H}_1-H_1+\frac{1}{2}\int_0^x [\tilde{q}_2(t)-q_2(t)]dt,\ N^1(x,x)=H_1+\frac{1}{2}\int_0^x q_2(t)dt,
\end{equation}
Similarly, using the estimates for $N^0(x,t)$ from Proposition 2.1 in \cite{GMXA}, we get that, for each fixed $x\in[0,d_2]$, there hold
 \begin{equation}\label{4.6s}
  \max_{x\ge t\ge 0}|N^0(x,t)|\le C  Q_1,\; \|N_x^0(x,\cdot)\|_{L^2(0,x)}\le C  Q_1,\; \|N_t^0(x,\cdot)\|_{L^2(0,x)}\le C  Q_1,
\end{equation}
where the positive constant $C$  depends only on the sum of $\|q_2\|_{L^2(0,d_2)}$ and $|H_1|$, since $Q_1 \le \varepsilon \le 1$ by \eqref{5.0}.
Using \eqref{4.5s}, integrating by parts, and taking \eqref{jm1s}, \eqref{4.6s} and the first equation in \eqref{5.0}  into account, we can get \eqref{4.3h} and \eqref{4.3hs} with 
\begin{equation}\label{jm11}
 W_1^0(t)=N^0(d_2,t)N^1(t,t)-N_t^0(d_2,t)-\int_t^{d_2} N^0(d_2,s) N_t^1(s,t)ds,\quad\|W_1^0\|_{{L^2(0,d_2)}}\le C Q_1,
\end{equation}
and
\begin{equation}\label{jm13}
 W_1^1(t)=N_x^0(d_2,t)+\int_t^{d_2} N_x^0(d_2,s) N^1(s,t)ds,\quad \|W_1^1\|_{{L^2(0,d_2)}}\le C Q_1,
\end{equation}
respectively.
The proof is complete.
\end{proof}
Denote
\begin{equation}\label{ju19}
  \tilde{g}_{i,0}(\lambda):=-a_1^{-1}\tilde{\psi}_i(d_2,\lambda),\quad   \tilde{g}_{i,1}(\lambda):=a_1\tilde{\psi}_i'(d_2,\lambda) +a_2\tilde{\psi}_i(d_2,\lambda).
\end{equation}
Similarly to \eqref{2.10}--\eqref{2.8}, define
\begin{equation}\label{4.7}
\tilde{\mathbf{U}}_i(t,\lambda):=(\tilde{U}_{i,1}(t,\lambda),\tilde{U}_{i,2}(t,\lambda)),\quad \tilde{U}_{i,1}(t,\lambda):=\tilde{g}_{i,1}(\lambda)s(t,\lambda),\quad \tilde{U}_{i,2}(t,\lambda):=\tilde{g}_{i,0}(\lambda) c(t,\lambda),
\end{equation}
\begin{equation}\label{4.8}
\tilde{f}_i(\lambda):=\left[\!\cos\rho a+\omega_1 \frac{\sin \rho a}{\rho}\right]\tilde{g}_{i,1}(\lambda)-\tilde{g}_{i,0}(\lambda)\left[\omega_1 \cos \rho a-\rho\sin\rho a\right],\quad i=0,1.
\end{equation}
\begin{lemma}\label{l4.2}
If the condition \eqref{4.0} is fulfilled for some $\varepsilon\le1$, then
\begin{equation}\label{x5.1}
\sum_{n\in \mathcal{S}_0} (|\mu_{0,n}|+1)\sum_{\nu=0}^{m_n^0}\left\|(\tilde{\mathbf{U}}_0^{\langle \nu\rangle}-{\mathbf{U}}_0^{\langle \nu\rangle})(t,\mu_{0,n})\right\|_{\mathcal{H}}^2+\sum_{n\in \mathcal{S}_1} \sum_{\nu=0}^{m_n^1}\left\|(\tilde{\mathbf{U}}_1^{\langle \nu\rangle}-{\mathbf{U}}_1^{\langle \nu\rangle})(t,\mu_{1,n})\right\|_{\mathcal{H}}^2 \le CQ^2,
\end{equation}
\begin{equation}\label{x5.2}
\sum_{n\in \mathcal{S}_0} (|\mu_{0,n}|+1)\sum_{\nu=0}^{m_n^0}\left|(\tilde{f}_0^{\langle \nu \rangle}-{f}_0^{\langle \nu \rangle})(\mu_{0,n})\right|^2+\sum_{n\in \mathcal{S}_1} \sum_{\nu=0}^{m_n^1}\left|(\tilde{f}_1^{\langle \nu \rangle}-{f}_1^{\langle \nu \rangle})(\mu_{1,n})\right|^2\le CQ^2,
\end{equation}
where $C$ depends only on the problem $B_1$.
\end{lemma}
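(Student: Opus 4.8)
The plan is to reduce the weighted sums in \eqref{x5.1} and \eqref{x5.2} to a single Bessel-type inequality for Fourier integrals against the sequences $\{\rho_{i,n}\}$, using the integral representations of Lemma \ref{l4.1}. Observe first that, under the standing assumption $H_1=\tilde H_1$ of Theorem \ref{th4.1}, condition \eqref{4.0} entails the hypotheses of Lemma \ref{l4.1}(ii) with $Q_1=Q$; hence both parts of Lemma \ref{l4.1} hold with the common bound $CQ$. From the definitions \eqref{ju19} one has $\tilde g_{i,0}-g_{i,0}=-a_1^{-1}(\tilde\psi_i-\psi_i)(d_2,\lambda)$ and $\tilde g_{i,1}-g_{i,1}=a_1(\tilde\psi_i'-\psi_i')(d_2,\lambda)+a_2(\tilde\psi_i-\psi_i)(d_2,\lambda)$, and Lemma \ref{l4.1} writes each term on the right as $\rho^{-p}\int_0^{d_2}W(t)e^{\pm\mathrm i\rho t}\,dt$ with $\|W\|_{L^2(0,d_2)}\le CQ$ and with $p$ read off from \eqref{4.3}--\eqref{4.3hs}. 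Substituting into \eqref{4.7} and \eqref{4.8}, I would express $\tilde{\mathbf U}_i-\mathbf U_i$ and $\tilde f_i-f_i$ as finite sums of (an explicit entire multiplier in $\rho$) $\times$ (a Fourier integral $\int_0^{d_2}W e^{\pm\mathrm i\rho t}\,dt$). Collecting powers of $\rho$, every such term carries at least one extra factor $\rho^{-1}$ relative to the corresponding $\mathbf U_i^n$ or $f_i$; thus, after multiplying by the weight $(|\mu_{0,n}|+1)\asymp|\rho_{0,n}|^2$ present only for $i=0$, the scalar coefficient multiplying each Fourier integral is $O(|\rho_{i,n}|^{-1})$ uniformly in the tail.

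The analytic engine is a Bessel-type (Paley--Wiener) inequality from the Appendix: for a separated sequence $\{\rho_n\}$ with bounded imaginary parts and any $W\in L^2(0,d_2)$, $\sum_n\big|\int_0^{d_2}W(t)e^{\pm\mathrm i\rho_n t}\,dt\big|^2\le C\|W\|_{L^2(0,d_2)}^2$. By Proposition \ref{pro1}, for each $i$ the sequence $\{\rho_{i,n}\}_{n\ge n_i}$ is separated and $\mathrm{Im}\,\rho_{i,n}\to0$, so this inequality applies at the eigen-$\rho$'s. For the terms with $\nu=0$ and $n\ge n_i$ (where $m_n^i=1$), the $O(|\rho_{i,n}|^{-1})$ coefficients form an $\ell^\infty$ sequence, the Fourier integrals form an $\ell^2$ sequence of norm $\le CQ$, and their product is $\ell^2$ with norm $\le CQ$; squaring gives the required $\le CQ^2$, separately for the $i=0$ and $i=1$ sums.

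For the $\lambda$-derivatives ($\nu\ge1$) and the finitely many multiple eigenvalues I would invoke Cauchy's formula. In the tail, the growing $\lambda$-gaps $|\mu_{i,n}-\mu_{i,m}|\gtrsim|\rho_{i,n}+\rho_{i,m}|$ let me surround each $\mu_{i,n}$ by a circle $\gamma_{i,n}$ of a fixed radius $r$, pairwise disjoint for $n$ large, on which $\mathrm{Im}\,\rho$ stays bounded. Then $\|(\tilde{\mathbf U}_i-\mathbf U_i)^{\langle\nu\rangle}(\cdot,\mu_{i,n})\|_{\mathcal H}\le Cr^{-\nu}\max_{\lambda\in\gamma_{i,n}}\|(\tilde{\mathbf U}_i-\mathbf U_i)(\cdot,\lambda)\|_{\mathcal H}$, and choosing the maximizing points $\rho_{i,n}^\ast=\sqrt{\lambda_{i,n}^\ast}$ produces again a separated sequence with bounded imaginary parts, so the Bessel inequality applies verbatim and the tail is bounded by $CQ^2$. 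The finitely many indices $n<\max(n_0,n_1)$, where $m_n^i$ may exceed $1$, contribute only finitely many terms: on a fixed compact $\lambda$-set the differences and all their $\lambda$-derivatives are entire and dominated by $C\|W\|_{L^2(0,d_2)}\le CQ$, each giving $\le CQ^2$. Summing the tail and finite parts yields \eqref{x5.1}; running the same argument with the scalar multipliers of \eqref{4.8} in place of $s(t,\lambda)$ and $c(t,\lambda)$ yields \eqref{x5.2}.

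The hard part is the uniform Bessel control of the derivative sequences: one must check that passing from the value at $\rho_{i,n}$ to the maximum over the fixed-radius circles $\gamma_{i,n}$ keeps the perturbed nodes $\{\rho_{i,n}^\ast\}$ separated with bounded imaginary parts, so that a single inequality covers all orders $\nu$, and that the power-counting indeed leaves an $\ell^\infty$-coefficient after the weight is applied, so that the weight neither under- nor over-compensates the decay of the $i=0$ terms. Once these two points are secured, the remaining estimates are routine.
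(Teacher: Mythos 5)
Your proposal is correct, and it shares the paper's skeleton: reduce everything via Lemma \ref{l4.1} (with $\tilde H_1=H_1$, so that \eqref{4.0} yields part (ii) with $Q_1=Q$) to products of explicit $\rho$-multipliers with Fourier integrals $\int W(t)e^{\mathrm{i}\rho t}dt$, $\|W\|_{L^2}\le CQ$, and then sum over the separated nodes $\{\sqrt{\mu_{i,n}}\}$. The execution differs in two ways, though. First, the paper invokes the Bessel-type inequality (Proposition \ref{pA4}) only for \eqref{x5.2}: for \eqref{x5.1} it uses the crude pointwise bound \eqref{ju1}, $|\tilde U_{i,j}^{\langle\nu\rangle}-U_{i,j}^{\langle\nu\rangle}|\le CQ\,e^{|\mathrm{Im}\rho|}(|\rho|+1)^{-(\nu+2-i)}$, after which the weighted squares sit on the convergent series $\sum_n(|\rho_{i,n}|+1)^{-2}$ and no Bessel inequality is needed; only the $f$-differences, which have no spare power of $\rho$ left at $\nu=0$ after weighting, force one to keep the Fourier integral intact and apply Proposition \ref{pA4}. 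Your uniform use of Bessel for both sums also works and is, if anything, more systematic, at the cost of being heavier where a trivial decay argument suffices. Second, the paper obtains the derivative bounds \eqref{ju1} and \eqref{ju3} for all $\nu\ge0$ directly, by differentiating the representations of Lemma \ref{l4.1} (each $\lambda$-derivative of $\rho^{-p}\int_0^{d_2}W(t)e^{\pm\mathrm{i}\rho t}dt$ retains the same structure, gaining a factor $\rho^{-1}$ and replacing $W(t)$ by $t^kW(t)$), whereas you route derivatives through Cauchy's formula on fixed-radius circles and re-apply Bessel at the maximizing nodes $\rho^{*}_{i,n}$; this is valid, since the $\lambda$-gaps grow like $|\rho_{i,n}+\rho_{i,m}|$, the circles are eventually disjoint, $|\rho^{*}_{i,n}-\rho_{i,n}|=O(|\rho_{i,n}|^{-1})$, and so separation and boundedness of $\mathrm{Im}\,\rho^{*}_{i,n}$ persist, but again it is more machinery than the paper needs. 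One small slip: for the $f$-differences the coefficient multiplying the Fourier integral is only $O(1)$ after weighting (cf.\ \eqref{ju3}, whose decay $|\rho|^{-(\nu+1-i)}$ vanishes at $\nu=0$, $i=1$, and is exactly cancelled by the weight for $i=0$), not $O(|\rho_{i,n}|^{-1})$ as you assert; this is harmless because your argument uses only boundedness of these coefficients, but the power count should be stated accurately.
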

\begin{proof}
Using Lemma \ref{l4.1} with $\tilde{H}_1=H_1$, together with definitions of ${U}_{i,j}(t,\lambda)$, $\tilde{{U}}_{i,j}(t,\lambda)$, $f_i(\lambda)$ and $\tilde{f}_i(\lambda)$, we have that for $\nu\ge0$ and $ i=0,1$ there hold
\begin{equation}\label{ju1}
  \left|\tilde{U}_{i,j}^{\langle \nu \rangle}(t,\lambda)-{U}_{i,j}^{\langle \nu \rangle}(t,\lambda)\right|\le CQ\frac{e^{|\mathrm{Im} \rho|}}{(|\rho|+1)^{\nu+2-i}},\quad j=1,2,
\end{equation}
\begin{equation}\label{ju3}
  \left|\tilde{f}_{i}^{\langle \nu \rangle}(\lambda)-{f}_{i}^{\langle \nu \rangle}(\lambda)\right|\le C\frac{e^{|\mathrm{Im} \rho|a}}{|\rho|^{\nu+1-i}}\left|\int_{-d_2}^{d_2}W_i(t)e^{\mathrm{i}\rho t}dt\right|,\ \ \|W_i\|_{L^2(-d_2,d_2)}\le CQ,
\end{equation}
where $C$ depends only on $\|q\|_{L^2(0,d_2)}$.
Note that $m_n^i=1$ for $n\ge n_i$, $i=0,1$. In particular, $|\sqrt{\mu_{i,m}}-\sqrt{\mu_{i,n}}|\ge c_0>0$ for $m,n\ge n_i$ and $m\ne n$, and $|\mathrm{Im } \sqrt{\mu_{i,n}}|\le c_1<\infty$. Using the estimate \eqref{ju1} in \eqref{2.9} and \eqref{4.7}, we obtain \eqref{x5.1}. Using \eqref{ju3}, together with Proposition \ref{pA4}, we get \eqref{x5.2}.
\end{proof}

Note that multiplicities of $\mu_{i,n}$ and  $\tilde{\mu}_{i,n}$ may be distinct. However, by virtue of \eqref{4.1}, we have that  $\mathcal{S}_i\subseteq \tilde{\mathcal{S}}_i$ for sufficiently small $\varepsilon > 0$. In particular, $\tilde{m}_n^i=1$ for $n\ge n_i$.
Denote
\begin{equation}\label{4.10}
  \tilde{\mathbf{U}}_{n+\nu}^i(t):=\tilde{\mathbf{U}}_i^{\langle \nu\rangle}(t,\tilde{\mu}_{i,n}), \quad  \tilde{\tau}_{n+\nu}^i:=\tilde{f}_i^{\langle \nu \rangle} (\tilde{\mu}_{i,n}), \quad n\in \tilde{\mathcal{S}}_i,\quad \nu=\overline{0,\tilde{m}_n^i-1} ,\quad i=0,1.
  \end{equation}
Consider the system of equations
\begin{equation}\label{4.11}
 \left\langle \tilde{\mathbf{K}}(\cdot),\tilde{\mathbf{U}}_{n}^i(\cdot)\right\rangle=\tilde{\tau}_n^i,\quad n\ge 0,\quad i=0,1,
\end{equation}
where $\tilde{\mathbf{K}}=(\tilde{K}_1,\tilde{K}_2)$ is the unknown element in $\mathcal{H}$.

For each $i=0,1$, fix $k\in[0,n_i)\cap \mathcal{S}_i$, and assume that the value $\mu_{i,k}$ with the multiplicity $m_k^i$ corresponds to the numbers $\{\tilde{\mu}_{i,n}\}_{n\in M_{k}^i}$, where $M_{k}^i:=\{k,k+1,\cdot\cdot\cdot,k+m_{k}^i-1\}$. Define $\tilde{\mathcal{S}}_k^i:=\tilde{\mathcal{S}}_i\cap M_k^i$. Then the relation \eqref{4.11} for $n\in \tilde{\mathcal{S}}_k^i$ can be rewritten as
\begin{equation}\label{4.12}
 \left\langle \tilde{\mathbf{K}}(\cdot),\tilde{\mathbf{U}}_i^{\langle \nu\rangle}(\cdot,\tilde{\mu}_{i,n})\right\rangle=\tilde{f}_i^{\langle \nu \rangle} (\tilde{\mu}_{i,n}),\quad n\in \tilde{\mathcal{S}}_k^i,\quad \nu=\overline{0,\tilde{m}_n^i-1},\quad i=0,1.
\end{equation}
For each fixed $t\in[0,a]$ and $i=0,1$, let $\tilde{{E}}_{k,i,j}(t,\lambda)$, $\tilde{F}_{k,i}(\lambda)$ be  the unique polynomials of degree at most $m_k^i-1$, respectively, interpolating $\tilde{U}_{i,j}(t,\lambda) \ (j=1,2)$ and $\tilde{f}_i(\lambda)$ and their derivatives in the usual way at the points $\{\tilde{\mu}_{i,n}\}_{n\in M_k^i}$. Namely,
\begin{equation}\label{4.1x}
\tilde{{E}}_{k,i,j}^{\langle\nu\rangle}(t,\tilde{\mu}_{i,n})=\tilde{U}_{i,j}^{\langle\nu\rangle}(t,\tilde{\mu}_{i,n}),\quad    \tilde{F}_{k,i}^{\langle\nu\rangle}(\tilde{\mu}_{i,n})=\tilde{f}_i^{\langle\nu\rangle}(\tilde{\mu}_{i,n}),\quad n\in \tilde{\mathcal{S}}_k^i,\quad  \nu=\overline{0,\tilde{m}_n^i-1}.
\end{equation}
Denote $\tilde{\mathbf{{E}}}_{k,i}(t,\lambda):=(\tilde{{E}}_{k,i,1}(t,\lambda),\tilde{{E}}_{k,i,2}(t,\lambda))$.
Eqs. (\ref{4.12}) and (\ref{4.1x}) imply that
\begin{equation}\label{4.13}
 \left\langle \tilde{\mathbf{K}}(\cdot),\tilde{\mathbf{{E}}}_{k,i}^{\langle\nu\rangle}(\cdot,\tilde{\mu}_{i,n})\right\rangle=\tilde{F}_{k,i}^{\langle \nu \rangle} (\tilde{\mu}_{i,n}),\quad n\in \tilde{\mathcal{S}}_k^i,\quad \nu=\overline{0,\tilde{m}_n^i-1},\quad i=0,1.
\end{equation}
Since $\tilde{{E}}_{k,i,j}(t,\lambda)$, $\tilde{F}_{k,i}(\lambda)$  are  the polynomials of degree at most $m_k^i-1$, we have
\begin{equation}\label{4.14}
 \left\langle \tilde{\mathbf{K}}(\cdot),\tilde{\mathbf{{E}}}_{k,i}(\cdot,\lambda)\right\rangle=\tilde{F}_{k,i} (\lambda),\quad \lambda\in \mathbb{C},\quad n\in \tilde{\mathcal{S}}_k^i,\quad \nu=\overline{0,\tilde{m}_n^i-1},\quad i=0,1.
\end{equation}
In particular, we have
\begin{equation}\label{4.15}
  \left\langle \tilde{\mathbf{K}}(\cdot),\tilde{\mathbf{{E}}}_{k,i}^{\langle\nu\rangle}(\cdot,\mu_{i,k})\right\rangle=\tilde{F}^{\langle\nu\rangle}_{k,i} (\mu_{i,k}),\quad \nu=\overline{0,{m}_k^i-1},\quad i=0,1.
\end{equation}
Define the sequences $\{\tilde{\tilde{\mathbf{U}}}_n^i\}_{n\ge0}$ and $\{\tilde{\tilde{\tau}}_{n}^i\}_{n\ge0}$ for $ i=0,1$ as follows
\begin{equation}\label{4.16}
\left\{\begin{split}
  &\tilde{\tilde{\mathbf{U}}}_{n + \nu}^i(t)=\tilde{\mathbf{E}}_{n,i}^{\langle\nu\rangle}(t,{\mu}_{i,n}),\; \tilde{\tilde{\tau}}_{n + \nu}^i=\tilde{F}_{n,i}^{\langle\nu\rangle}({\mu}_{i,n}),\;  n\in \mathcal{S}_i\cap [0,n_i), \; \nu=\overline{0,{m}_n^i-1},\\
  &\tilde{\tilde{\mathbf{U}}}_n^i(t)={\tilde{\mathbf{U}}}^i_n(t),\quad\tilde{\tilde{\tau}}_n^i={\tilde{\tau}}_n^i,\quad n\ge n_i.
\end{split}\right.
\end{equation}
Then the system (\ref{4.11}) is equivalent to
\begin{equation}\label{4.17}
 \left\langle \tilde{\mathbf{K}}(\cdot),\tilde{\tilde{\mathbf{U}}}_{n}^i(\cdot)\right\rangle=\tilde{\tilde{\tau}}_n^i,\quad n\ge 0,\quad i=0,1.
\end{equation}

\begin{lemma}\label{l4.3}
There exists $\varepsilon>0 $ such that, if \eqref{4.1} and \eqref{4.0} are fulfilled, then the following estimates hold
\begin{equation}\label{4.18}
\sqrt{  \sum_{n\ge0}\left((|\mu_{0,n}|+1)\left\|\mathbf{U}_n^0-\tilde{\tilde{\mathbf{U}}}_n^0\right\|_{\mathcal{H}}^2+\left\|
\mathbf{U}_n^1-\tilde{\tilde{\mathbf{U}}}_n^1\right\|_{\mathcal{H}}^2\right)}<C (\Lambda+Q),
\end{equation}
\begin{equation}\label{4.19}
\sqrt{  \sum_{n\ge0}\left((|\mu_{0,n}|+1)\left|\tau_n^0-\tilde{\tilde{\tau}}_n^0\right|^2+\left|\tau_n^1-\tilde{\tilde{\tau}}_n^1\right|^2\right)}<C (\Lambda+Q).
\end{equation}
\end{lemma}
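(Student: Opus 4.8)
The plan is to prove both estimates \eqref{4.18} and \eqref{4.19} by splitting each difference, via a single triangle inequality, into a \emph{potential-perturbation} part (keeping the nodes fixed) and an \emph{eigenvalue-shift} part (keeping the perturbed potential fixed), and by treating the finitely many small indices $n<n_i$, where the interpolation construction \eqref{4.16} is active, separately from the tail $n\ge n_i$. First I would introduce the intermediate sequences built from the perturbed potential $\tilde{q}_2$ but evaluated at the \emph{original} nodes $\{\mu_{i,n}\}$ with their original multiplicities, namely $\hat{\mathbf U}_{n+\nu}^i(t):=\tilde{\mathbf U}_i^{\langle\nu\rangle}(t,\mu_{i,n})$ and $\hat{\tau}_{n+\nu}^i:=\tilde{f}_i^{\langle\nu\rangle}(\mu_{i,n})$ for $n\in\mathcal S_i$, $\nu=\overline{0,m_n^i-1}$, and then write $\mathbf U_n^i-\tilde{\tilde{\mathbf U}}_n^i=(\mathbf U_n^i-\hat{\mathbf U}_n^i)+(\hat{\mathbf U}_n^i-\tilde{\tilde{\mathbf U}}_n^i)$, and likewise for $\tau$. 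The first bracket involves functions evaluated at coinciding nodes, so after the canonical reindexing from $(n\in\mathcal S_i,\nu)$ to $n\ge 0$ the weighted $\ell^2$-sums of $\|\mathbf U_n^i-\hat{\mathbf U}_n^i\|_{\mathcal H}$ and of $|\tau_n^i-\hat{\tau}_n^i|$ are \emph{precisely} the left-hand sides of \eqref{x5.1} and \eqref{x5.2} in Lemma~\ref{l4.2}, hence bounded by $CQ$.

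For the eigenvalue-shift bracket on the tail $n\ge n_i$, where $m_n^i=\tilde{m}_n^i=1$, the term reduces to $\tilde{\mathbf U}_i(\cdot,\mu_{i,n})-\tilde{\mathbf U}_i(\cdot,\tilde{\mu}_{i,n})$, which I would estimate by the fundamental theorem of calculus in $\lambda$ along the segment joining $\tilde{\mu}_{i,n}$ to $\mu_{i,n}$, using the bound $\|\partial_\lambda\tilde{\mathbf U}_i(\cdot,\lambda)\|_{\mathcal H}\le C(|\rho|+1)^{-(2-i)}$ that follows from the asymptotics \eqref{jg5}--\eqref{jg6} (each $\lambda$-differentiation contributes a factor $(2\rho)^{-1}$). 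This gives $\|\tilde{\mathbf U}_i(\cdot,\mu_{i,n})-\tilde{\mathbf U}_i(\cdot,\tilde{\mu}_{i,n})\|_{\mathcal H}\le C|\mu_{i,n}-\tilde{\mu}_{i,n}|(|\mu_{i,n}|+1)^{-(2-i)/2}$; multiplying by the weight $(|\mu_{0,n}|+1)$ in the case $i=0$ and summing in $\ell^2$ then yields exactly $C\Lambda$, because for $i=0$ the weight is cancelled by the $|\rho|^{-2}$ decay coming from the second $\lambda$-derivative. The strictly parallel computation for $\hat{\tau}_n^i-\tilde{\tilde{\tau}}_n^i$ uses the growth of $f_i$ from \eqref{2.10} together with \eqref{jg1}--\eqref{jg4}, and Proposition~\ref{pA4} to pass from the pointwise bound to the $\ell^2$-sum.

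The finitely many small indices $n<n_i$ require instead a confluent Hermite-interpolation estimate: here $\hat{\mathbf U}_{k+\nu}^i=\tilde{\mathbf U}_i^{\langle\nu\rangle}(\cdot,\mu_{i,k})$ must be compared with $\tilde{\tilde{\mathbf U}}_{k+\nu}^i=\tilde{\mathbf E}_{k,i}^{\langle\nu\rangle}(\cdot,\mu_{i,k})$, where $\tilde{\mathbf E}_{k,i}$ interpolates $\tilde{\mathbf U}_i$ at the clustered nodes $\{\tilde{\mu}_{i,n}\}_{n\in M_k^i}$. Since for $\varepsilon$ small enough these nodes all coalesce at $\mu_{i,k}$ by \eqref{4.1} and $\tilde{\mathbf U}_i(\cdot,\lambda)$ is entire in $\lambda$, the interpolated derivatives at $\mu_{i,k}$ differ from the true Hermite data by at most $C\max_{n\in M_k^i}|\mu_{i,n}-\tilde{\mu}_{i,n}|\le C\Lambda$; as there are only finitely many pairs $(k,\nu)$, their total contribution is $\le C\Lambda^2$, and the identical argument disposes of $\hat{\tau}_n^i-\tilde{\tilde{\tau}}_n^i$. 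Collecting the two brackets gives \eqref{4.18} and \eqref{4.19} with the bound $C(\Lambda+Q)$.

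The main obstacle I anticipate is the bookkeeping of the weights against the $|\rho|$-decay in the eigenvalue-shift estimate: one must verify that for $i=0$ the factor $(|\mu_{0,n}|+1)$ is \emph{exactly} compensated by the $|\rho|^{-2}$ produced by two $\lambda$-differentiations (consistently with the normalization \eqref{uni}), so that the weighted tail sum closes to $C\Lambda$ rather than diverging; and, at the small-index level, one must establish the confluent interpolation stability uniformly as the perturbed multiple nodes coalesce, which is exactly where the possible multiplicity of eigenvalues in the non-self-adjoint setting enters. Choosing $\varepsilon$ small enough to guarantee $\mathcal S_i\subseteq\tilde{\mathcal S}_i$ and that the clusters $M_k^i$ remain separated, as noted before \eqref{4.10}, is what legitimizes these uniform bounds.
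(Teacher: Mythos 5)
Your proposal is correct and follows essentially the same route as the paper's proof: the same triangle-inequality split into a potential-perturbation part (delegated to Lemma~\ref{l4.2}) and an eigenvalue-shift part handled by the fundamental theorem of calculus in $\lambda$ with the decay bounds \eqref{4.23}--\eqref{4.24}, plus a separate Hermite-interpolation argument for the finitely many small indices. The ``confluent interpolation stability'' you flag as the remaining obstacle is exactly Proposition~\ref{lA1} (the Marletta--Weikard estimate) in the Appendix, which the paper invokes to obtain \eqref{zla0}--\eqref{zla}; also note that Proposition~\ref{pA4} is actually needed inside the proof of Lemma~\ref{l4.2} rather than for the eigenvalue-shift sum, where your FTC bound already closes the $\ell^2$-estimate.
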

\begin{proof}
From the theory of the transformation operators \cite{Lev}, we know that
\begin{equation}\label{4.20}
  |\psi_i^{\langle \nu\rangle}(x,\lambda)|\le C \frac{e^{|\mathrm{Im} \rho|x}}{(|\rho|+1)^{\nu+1-i}},\quad
  |\psi_i'^{\langle \nu\rangle}(x,\lambda)|\le C \frac{e^{|\mathrm{Im} \rho|x}}{(|\rho|+1)^{\nu-i}},\quad \nu\ge0,
\end{equation}
Using \eqref{4.20} in \eqref{2.8} and \eqref{2.10}, together with the definitions of $g_{i,j}(\lambda)$, we obtain the estimates
\begin{equation}\label{4.21}
|U_{i,j}^{\langle \nu\rangle}(t,\lambda)|\le C \frac{e^{|\mathrm{Im} \rho|}}{(|\rho|+1)^{\nu+1-i}},\quad\nu\ge0,\quad i=0,1,\quad j=1,2,
\end{equation}
\begin{equation}\label{4.22}
|f_{i}^{\langle \nu\rangle}(\lambda)|\le C \frac{e^{|\mathrm{Im} \rho|}}{(|\rho|+1)^{\nu-i}},\quad\nu\ge0,\quad i=0,1.
\end{equation}
In view of \eqref{ju1} and \eqref{ju3}, and noting $Q\le \varepsilon\le 1$, we also have
\begin{equation}\label{4.23}
|\tilde{U}_{i,j}^{\langle \nu\rangle}(t,\lambda)|\le C \frac{e^{|\mathrm{Im} \rho|}}{(|\rho|+1)^{\nu+1-i}},\quad\nu\ge0,\quad i=0,1,\quad j=1,2,
\end{equation}
\begin{equation}\label{4.24}
|\tilde{f}_{i}^{\langle \nu\rangle}(\lambda)|\le C \frac{e^{|\mathrm{Im} \rho|}}{(|\rho|+1)^{\nu-i}},\quad\nu\ge0,\quad i=0,1.
\end{equation}
Since
\begin{equation*}
 \tilde{U}_{i,j}^{\langle \nu\rangle}(t,\mu_{i,n})-\tilde{U}_{i,j}^{\langle \nu\rangle}(t,\tilde{\mu}_{i,n})= (\nu+1) \int_{\tilde{\mu}_{i,n}}^{{\mu}_{i,n}} \tilde{U}_{i,j}^{\langle \nu+1\rangle}(t,\mu)d\mu,\quad\nu\ge0,\quad i=0,1,\quad j=1,2,
\end{equation*}
it follows from \eqref{4.23} that
\begin{equation}\label{4.25}
 |\tilde{U}_{i,j}^{\langle \nu\rangle}(t,\mu_{i,n})-\tilde{U}_{i,j}^{\langle \nu\rangle}(t,\tilde{\mu}_{i,n})|\le C \frac{|{\mu_{i,n}}-{\tilde{\mu}_{i,n}}|}{(\sqrt{|\mu_{i,n}|}+1)^{\nu+2-i}},\quad\nu\ge0,\quad i=0,1,\quad j=1,2.
\end{equation}
Similarly, we get
\begin{equation}\label{4.26}
 |\tilde{f}_{i}^{\langle \nu\rangle}(\mu_{i,n})-\tilde{f}_{i}^{\langle \nu\rangle}(\tilde{\mu}_{i,n})|\le C \frac{|{\mu_{i,n}}-{\tilde{\mu}_{i,n}}|}{(\sqrt{|\mu_{i,n}|}+1)^{\nu+1-i}},\quad\nu\ge0,\quad i=0,1.
\end{equation}
Note that
\begin{equation*}
   |\tilde{U}_{i,j}^{\langle \nu\rangle}(t,\tilde{\mu}_{i,n})-{U}_{i,j}^{\langle \nu\rangle}(t,\mu_{i,n})|\le  |\tilde{U}_{i,j}^{\langle \nu\rangle}(t,\tilde{\mu}_{i,n})-\tilde{U}_{i,j}^{\langle \nu\rangle}(t,{\mu}_{i,n})|+ |\tilde{U}_{i,j}^{\langle \nu\rangle}(t,{\mu}_{i,n})-{U}_{i,j}^{\langle \nu\rangle}(t,{\mu}_{i,n})|,
\end{equation*}
\begin{equation*}
   |\tilde{f}_{i}^{\langle \nu\rangle}(\tilde{\mu}_{i,n})-{f}_{i}^{\langle \nu\rangle}(\mu_{i,n})|\le  |\tilde{f}_{i}^{\langle \nu\rangle}(\tilde{\mu}_{i,n})-\tilde{f}_{i}^{\langle \nu\rangle}({\mu}_{i,n})|+ |\tilde{f}_{i}^{\langle \nu\rangle}({\mu}_{i,n})-{f}_{i}^{\langle \nu\rangle}({\mu}_{i,n})|.
\end{equation*}
Using  \eqref{4.25}, \eqref{4.26} and Lemma \ref{l4.2}, and noting $m_n^i=1$ for $n\ge n_i$, $i=0,1$, we obtain
\begin{equation}\label{4.27}
  \sum_{n\ge n_0}(|\mu_{0,n}|+1)\left\|\mathbf{U}_n^0-\tilde{\tilde{\mathbf{U}}}_n^0\right\|_{\mathcal{H}}^2+\sum_{n\ge n_1}\left\|
\mathbf{U}_n^1-\tilde{\tilde{\mathbf{U}}}_n^1\right\|_{\mathcal{H}}^2<C (\Lambda+Q)^2,
\end{equation}
\begin{equation}\label{4.28}
 \sum_{n\ge n_0}(|\mu_{0,n}|+1)\left|\tau_n^0-\tilde{\tilde{\tau}}_n^0\right|^2+\sum_{n\ge n_1}\left|\tau_n^1-\tilde{\tilde{\tau}}_n^1\right|^2<C ( \Lambda+Q)^2.
\end{equation}

Now let us consider $n\in [0,n_i)$, $i=0,1$. Fix $i=0,1$. By the definitions of $\tilde{E}_{n,i,j}(t,\lambda)$ and $\tilde{F}_{n,i}(\lambda)$, using Proposition~\ref{lA1}, we have that for each fixed $k\in[0,n_i)\cap \mathcal{S}_i$,
\begin{equation}\label{zla0}
 \left|\tilde{E}_{k,i,j}^{\langle\nu\rangle}(t,\mu_{i,k})- \tilde{U}_{i,j}^{\langle\nu\rangle}(t,\mu_{i,k})\right|\le C\max_{n\in \tilde{\mathcal{S}}_k^i}|\tilde{\mu}_{i,n}-\mu_{i,k}|,\quad \nu=\overline{0,{m}_k^i-1},
\end{equation}
 \begin{equation}\label{zla}
 \left|\tilde{F}_{k,i}^{\langle\nu\rangle}(\mu_{i,k})-\tilde{f}_i^{\langle\nu\rangle}(\mu_{i,k})\right|\le  C\max_{n\in \tilde{\mathcal{S}}_k^i}|\tilde{\mu}_{i,n}-\mu_{i,k}|,\quad \nu=\overline{0,{m}_k^i-1},
 \end{equation}
for sufficient small $\varepsilon>0$. Note that
\begin{equation*}
  \left|\tilde{E}_{k,i,j}^{\langle\nu\rangle}(t,\mu_{i,k})- {U}_{i,j}^{\langle\nu\rangle}(t,\mu_{i,k})\right|\le \left|\tilde{E}_{k,i,j}^{\langle\nu\rangle}(t,\mu_{i,k})- \tilde{U}_{i,j}^{\langle\nu\rangle}(t,\mu_{i,k})\right|+\left|\tilde{U}_{i,j}^{\langle\nu\rangle}(t,\mu_{i,k})- {U}_{i,j}^{\langle\nu\rangle}(t,\mu_{i,k})\right|,
\end{equation*}
\begin{equation*}
  \left|\tilde{F}_{k,i}^{\langle\nu\rangle}(\mu_{i,k})- {f}_{i}^{\langle\nu\rangle}(\mu_{i,k})\right|\le \left|\tilde{F}_{k,i}^{\langle\nu\rangle}(\mu_{i,k})- \tilde{f}_{i}^{\langle\nu\rangle}(\mu_{i,k})\right|+\left|\tilde{f}_{i}^{\langle\nu\rangle}(\mu_{i,k})- {f}_{i}^{\langle\nu\rangle}(\mu_{i,k})\right|,
\end{equation*}
Using Lemma \ref{l4.2} and \eqref{zla0}, \eqref{zla}, we obtain
\begin{equation*}
 \sum_{n\in M_k^i} \left\|\tilde{\tilde{\mathbf{U}}}_n^i-\mathbf{U}_n^i\right\|_{\mathcal{H}}\le C\left(Q+\max_{n\in \tilde{\mathcal{S}}_k^i}|\tilde{\mu}_{i,n}-\mu_{i,k}|\right), \quad k\in \mathcal{S}_i\cap [0,n_i),\quad i=0,1,
\end{equation*}
\begin{equation*}
 \sum_{n\in M_k^i} |\tilde{\tilde{\tau}}_n^i-\tau_n^i|\le C\left(Q+\max_{n\in \tilde{\mathcal{S}}_k^i}|\tilde{\mu}_{i,n}-\mu_{i,k}|\right),\quad k\in \mathcal{S}_i\cap [0,n_i),\quad i=0,1.
\end{equation*}
Hence
\begin{equation}\label{4.30}
  \sum_{n=0}^{n_0-1}(|\mu_{0,n}|+1)\left\|\mathbf{U}_n^0-\tilde{\tilde{\mathbf{U}}}_n^0\right\|_{\mathcal{H}}^2+\sum_{n=0}^{n_1-1}\left\|
\mathbf{U}_n^1-\tilde{\tilde{\mathbf{U}}}_n^1\right\|_{\mathcal{H}}^2<C (\Lambda+Q)^2,
\end{equation}
\begin{equation}\label{4.31}
 \sum_{n=0}^{n_0-1}(|\mu_{0,n}|+1)\left|\tau_n^0-\tilde{\tilde{\tau}}_n^0\right|^2+\sum_{n=0}^{n_1-1}\left|\tau_n^1-\tilde{\tilde{\tau}}_n^1\right|^2<C (\Lambda+Q)^2.
\end{equation}
Using \eqref{4.27}, \eqref{4.28}, \eqref{4.30}, and \eqref{4.31}, we arrive at (\ref{4.18}) and (\ref{4.19}).
\end{proof}

\begin{proof}[Proof of Theorem \ref{th4.1}]
Using Lemma \ref{l4.3} and Proposition \ref{pA5}, we get that, for sufficiently small $\varepsilon>0$, there is a unique $\tilde{\mathbf{K}}=(\tilde{K}_1,\tilde{K}_2)\in \mathcal{H}$ satisfying \eqref{4.17} that is equivalent to \eqref{4.11}. Recall the definitions \eqref{ju19}-\eqref{4.8} of $\tilde{\mathbf{U}}_i(t,\lambda)$ and $\tilde{f}_i(\lambda)$, $i=0,1$. Define the functions  $\tilde{\Delta}_i(\lambda)$ ($i=0,1$) with $\tilde{\mathbf{K}}(t)$, $\tilde{\mathbf{U}}_i(t,\lambda)$ and $\tilde{f}_i(\lambda)$
\begin{equation}\label{4.34}
\tilde{\Delta}_i(\lambda)=-\left\langle \tilde{\mathbf{K}}(\cdot),\tilde{\mathbf{U}}_i(\cdot,\lambda)\right\rangle+\tilde{f}_i(\lambda) ,\quad i=0,1.
\end{equation}
Then Eq.\eqref{4.11} together with  \eqref{4.10} imply
\begin{equation*}
  \tilde{\Delta}_i^{\langle\nu\rangle}(\tilde{\mu}_{i,n})=0,\quad n\in \tilde{\mathcal{S}}_i,\quad \nu=\overline{0,\tilde{m}_n^i-1},\quad i=0,1.
\end{equation*}
Note that
Proposition \ref{pA5} also implies $\|\tilde{\mathbf{K}}-{\mathbf{K}}\|_{\mathcal{H}}\le C(Q+\Lambda)$. Thus, using Proposition \ref{thca}, we conclude that there exists a unique $\tilde{q}_1\in L^2(0,a)$ such that $\{\tilde{K}_1,\tilde{K}_2,\omega_1 \}$ are the Cauchy data for $\tilde{q}_1$ and $\tilde{h} =\omega_1 -\frac{1}{2}\int_0^a\tilde{q}_1(x)dx$, and the estimate \eqref{4.2} is valid.
 Define the functions $\tilde{\varphi}_0(\lambda)$ and $\tilde{\varphi}_1(\lambda)$ by the Cauchy data $\{\tilde{K}_1,\tilde{K}_2,\omega_1 \}$
\begin{equation}\label{4.32}
\tilde{\varphi}_0(\lambda):=\cos\rho a+ {\omega}_1 \frac{\sin \rho a}{\rho}-\int_0^a \tilde{K}_1(t)\frac{\sin \rho t}{\rho}dt,
\end{equation}
\begin{equation}\label{4.33}
\tilde{\varphi}_1(\lambda):=-\rho\sin\rho a+{\omega}_1 \cos \rho a+\int_0^a \tilde{K}_2(t)\cos \rho tdt,
\end{equation}
Then Eqs.\eqref{4.34}, \eqref{4.32} and \eqref{4.33}, together with \eqref{4.7} and \eqref{4.8}, imply that the functions $\tilde{\Delta}_i(\lambda)$ ($i=0,1$)  defined in \eqref{4.34} have the expressions
\begin{equation}\label{4.35}
\tilde{\Delta}_i(\lambda):=\tilde{\varphi}_0(\lambda)\tilde{g}_{i,1}(\lambda)- \tilde{\varphi}_1(\lambda)\tilde{g}_{i,0}(\lambda).
\end{equation}
The proof is complete.
\end{proof}

\begin{proof}[Proof of Corollary \ref{th5.1}]
The proof of Corollary \ref{th5.1} is similar to the proof of Theorem \ref{th4.1}. Letting $\{\mu_{0,n}\}_{n\ge0}=\emptyset$ and using only (ii) of Lemma \ref{l4.1} in the proof of Theorem \ref{th4.1}, we complete the proof of Corollary \ref{th5.1}.
\end{proof}

\begin{proof}[Proof of Corollary \ref{cor5.1}]
If $a=1/2$, then
$\lambda_{1,n}$ has the following asymptotics (cf.\eqref{asy1})
\begin{equation}\label{x5.0}
\sqrt{\lambda_{1,n}}=n\pi+\frac{{\eta_++\eta_-(-1)^n}}{n\pi{b_+}}+\frac{\kappa_{1,n}}{n},\quad \{\kappa_{1,n}\}\in l^2.
\end{equation}
Using Proposition 1 in \cite{BK} with \eqref{x5.0}, we get that $\{c^{\langle \nu \rangle}(t,\lambda_{1,n})\}_{n\in \mathcal{S}_1,\nu=\overline{0,m_n^1-1}}$ is a Riesz basis in $L^2(0,1)$, where we have assumed $\mu_{1,n}=\lambda_{1,n}$ for all $n\ge0$, and $c(t,\lambda)=\cos \rho t$. Then, using Lemma \ref{l4} and Corollary \ref{th5.1}, we get Corollary \ref{cor5.1}.
\end{proof}

\noindent {\bf Acknowledgments.}
 The author Yang was supported in part by
the Natural Science Foundation of the Jiangsu
Province of China (BK 20201303). The authors are grateful to Professor Sergey Buterin for valuable suggestions.
\\[2mm]

\begin{appendix}

\noindent {\large\textbf{Appendix}}
\\[1mm]

\setcounter{equation}{0}
\renewcommand\theequation{A.\arabic{equation}}

\setcounter{lemma}{0}
\renewcommand\thelemma{A.\arabic{lemma}}

\setcounter{proposition}{0}
\renewcommand\theproposition{A.\arabic{proposition}}

In Appendix, we provide a few auxiliary propositions. One can also find partially similar results in \cite{BB,BN1}. For convenience of readers, we summarize them in Appendix.

In Propositions~\ref{A1} and~\ref{A2}, we consider a sequence of complex numbers $\{ z_n \}_{n \ge 0}$ with finite multiplicities. Let $m_{n}$ denote the multiplicity of the value $z_{n}$ in the sequence $\{z_{n}\}_{n\ge0}$. Without loss of generality, assume that $z_{n}=z_{n+1}=\cdot\cdot\cdot=z_{n+m_n-1}$ and denote
\begin{equation*}
\mathcal{S}:=\{n\in \mathbb{N}:z_{n}\ne z_{n-1},n\ge1\}\cup \{0\}.
\end{equation*}

\begin{proposition}\label{A1}
Assume that $\{z_{n}\}_{n\ge0}$ (counted with multiplicities) are the zeros of the function $$D(z):=\phi_0(z)g_1(z)-\phi_1(z)g_0(z),$$ where the  functions $\phi_j(z)$ and $g_j(z)$ are analytic at $z_n\ ({n\ge0})$, $j=0,1$.
If the vectors $(g_0(z_n),g_1(z_n))\ne(0,0)\ne(\phi_0(z_n),\phi_1(z_n))$ for $n\in \mathcal{S}$, then
there exist constants $C_{n,\nu}$, $M_{n,\nu}$, $n\in \mathcal{S}$, $\nu=\overline{0,m_n-1}$ such that $C_{n,0}\ne0$,  $M_{n,0}\ne0$ and
\begin{equation}\label{a2}
 g_{j}^{\langle \nu\rangle}(z_{n}):=\left.\frac{1}{\nu!}\frac{d^\nu g_j(z)}{dz^\nu}\right|_{z=z_n}=\sum_{k=0}^\nu C_{n,k}\phi_j^{\langle\nu-k\rangle}(z_{n}),\quad n\in \mathcal{S},\quad \nu=\overline{0,m_n-1},\quad j=0,1.
\end{equation}
  \begin{equation}\label{a1}
 \phi_j^{\langle \nu\rangle}(z_{n})=\sum_{k=0}^\nu M_{n,k}g_{j}^{\langle\nu-k\rangle}(z_{n}),\quad n\in \mathcal{S},\quad \nu=\overline{0,m_n-1},\quad j=0,1,
\end{equation}
\end{proposition}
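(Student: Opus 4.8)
The plan is to recast \eqref{a2} as the statement that the vector $(g_0,g_1)$ is a scalar multiple of $(\phi_0,\phi_1)$ modulo $(z-z_n)^{m_n}$, with an analytic scalar factor that does not vanish at $z_n$. Concretely, I would show that near each $z_n$ ($n\in\mathcal S$) there is a function $C(z)$, analytic at $z_n$ with $C(z_n)\ne0$, such that
\[
g_j(z)=C(z)\phi_j(z)+O\bigl((z-z_n)^{m_n}\bigr),\qquad j=0,1.
\]
Once this is available, I would expand both sides in Taylor series at $z_n$ and equate coefficients up to order $m_n-1$; in the normalization $f^{\langle\nu\rangle}=\frac{1}{\nu!}f^{(\nu)}$, the Cauchy product rule for Taylor coefficients gives exactly \eqref{a2} with $C_{n,k}:=C^{\langle k\rangle}(z_n)$, the remainder contributing nothing below order $m_n$.

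To construct $C$, I would invoke the hypothesis $(\phi_0(z_n),\phi_1(z_n))\ne(0,0)$ to fix a pivot index $p\in\{0,1\}$ with $\phi_p(z_n)\ne0$; then $\phi_p$ is nonvanishing on a neighborhood of $z_n$, so $C(z):=g_p(z)/\phi_p(z)$ is analytic there and satisfies $g_p=C\phi_p$ identically. For the remaining index $q$, the decisive computation is the elementary identity $g_q\phi_p-g_p\phi_q=\pm D$, where the sign is $+$ if $p=0$ and $-$ if $p=1$. Since $z_n$ is a zero of $D$ of multiplicity $m_n$, the left-hand side vanishes to order $m_n$; dividing by the nonvanishing factor $\phi_p$ yields $g_q=C\phi_q+D/\phi_p=C\phi_q+O\bigl((z-z_n)^{m_n}\bigr)$, which is precisely the missing component of the displayed relation.

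The main obstacle is verifying that $C_{n,0}=C(z_n)\ne0$, i.e.\ $g_p(z_n)\ne0$; this is the one place where the two nonvanishing hypotheses genuinely interact. From $D(z_n)=0$ one has $\phi_0(z_n)g_1(z_n)=\phi_1(z_n)g_0(z_n)$. If $g_p(z_n)=0$, then substituting this into the relation and using $\phi_p(z_n)\ne0$ forces the other component $g_q(z_n)$ to vanish as well, contradicting $(g_0(z_n),g_1(z_n))\ne(0,0)$. Hence $g_p(z_n)\ne0$ and $C_{n,0}\ne0$. I expect this step, together with the bookkeeping of the product rule in the $f^{\langle\nu\rangle}$ normalization, to be the only nonautomatic parts of the argument.

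Finally, \eqref{a1} follows by the symmetric argument, exchanging the roles of $(\phi_0,\phi_1)$ and $(g_0,g_1)$: using $(g_0(z_n),g_1(z_n))\ne(0,0)$ I would select a pivot $p$ with $g_p(z_n)\ne0$, set $M(z):=\phi_p(z)/g_p(z)$, and repeat the construction verbatim, the identity $\phi_q g_p-\phi_p g_q=\mp D$ again vanishing to order $m_n$. The nonvanishing of $M_{n,0}=\phi_p(z_n)/g_p(z_n)$ then follows from $(\phi_0(z_n),\phi_1(z_n))\ne(0,0)$ by the same contradiction argument, completing the proof.
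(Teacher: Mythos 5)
Your proof is correct, but it takes a genuinely different route from the paper's. The paper proves \eqref{a2} and \eqref{a1} by induction on $\nu$: for $\nu=0$ it uses $D(z_n)=0$ to get proportionality of the vectors $(\phi_0(z_n),\phi_1(z_n))$ and $(g_0(z_n),g_1(z_n))$ in $\mathbb{C}^2$, and at each inductive step $\nu=\xi$ it expands $D^{\langle\xi\rangle}(z_n)=0$, substitutes the already-established relations, and rearranges double sums to exhibit two vectors whose proportionality yields the next constant $C_{n,\xi}$; the coefficients are thus produced one at a time and somewhat implicitly. You instead construct the scalar factor once and for all: choosing a pivot $p$ with $\phi_p(z_n)\ne 0$, setting $C(z)=g_p(z)/\phi_p(z)$, and using the identity $g_q\phi_p-g_p\phi_q=\pm D$ together with the order-$m_n$ vanishing of $D$ to get $g_j=C\phi_j+O\bigl((z-z_n)^{m_n}\bigr)$ for both $j$, after which all the constants appear simultaneously as Taylor coefficients $C_{n,k}=C^{\langle k\rangle}(z_n)$ via the Cauchy product rule; your contradiction argument for $C_{n,0}\ne 0$ (and symmetrically $M_{n,0}\ne 0$) is sound and uses exactly the same interaction of the two nonvanishing hypotheses with $D(z_n)=0$ that the paper uses at the base step. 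What your approach buys is conceptual economy and explicitness: closed-form coefficients, no double-sum bookkeeping, and a transparent reason why the same constants serve both $j=0$ and $j=1$ (they come from a single function $C$); it even shows one could get \eqref{a1} for free from $M=1/C$ since $C(z_n)\ne0$. What the paper's induction buys is symmetry -- it never has to single out a pivot component, working instead with proportionality of vectors at every order -- and it stays entirely at the level of derivative values at $z_n$ without introducing an auxiliary analytic function.
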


The proof of Proposition~\ref{A1} repeats the proof of Lemma~1 in \cite{BB}, so we omit it.

\begin{proposition}\label{A2}
Let  $\phi_0(z)$ and $\phi_1(z)$ be nontrivial entire functions and
\begin{equation}\label{a5}
\int_0^a \left( h_1(t)\phi_{1}(z)\frac{\sin \sqrt{z}t}{\sqrt{z}}+h_2(t)\phi_{0}(z)\cos \sqrt{z}t \right)dt \equiv 0, \quad h_1,h_2\in L^2(0,a).
\end{equation}
Assume that $\phi_1(z)$ has the zeros $\{z_{n}\}_{n\ge0}$ (counted with multiplicities) satisfying the asymptotics
\begin{equation*}
  \sqrt{z_n}=\frac{n\pi}{a}+\kappa_n,\quad \{\kappa_n\}\in l^2.
\end{equation*}
If $\phi_0(z_n)\ne0$ for all $n$, then  $h_1=0$ and $h_2=0$ in $L^2(0,a)$.
\end{proposition}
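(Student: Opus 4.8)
The plan is to recast the identity \eqref{a5} in a product form and then force the two entire transforms to vanish one after the other. First I would introduce
\[
A(z):=\int_0^a h_1(t)\frac{\sin\sqrt z\,t}{\sqrt z}\,dt,\qquad B(z):=\int_0^a h_2(t)\cos\sqrt z\,t\,dt,
\]
and note that, being even functions of $\rho=\sqrt z$, both $A$ and $B$ are entire in $z$; by the Paley--Wiener theorem their restrictions to the real $\rho$-axis lie in $L^2$ and they are of exponential type $\le a$. With this notation, \eqref{a5} becomes $\phi_1(z)A(z)+\phi_0(z)B(z)\equiv 0$.

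Next I would examine this relation at the zeros of $\phi_1$. Since $\phi_0(z)B(z)=-\phi_1(z)A(z)$ and $A$ is entire, the right-hand side vanishes at each $z_n$ to order at least $m_n$; because $\phi_0(z_n)\ne 0$ by hypothesis, $B$ itself must vanish at $z_n$ to order at least $m_n$. Writing $c^{\langle\nu\rangle}(t,z):=\frac{1}{\nu!}\partial_z^\nu\cos\sqrt z\,t$, these vanishing conditions read $\int_0^a h_2(t)\,c^{\langle\nu\rangle}(t,z_n)\,dt=0$ for $n\in\mathcal S$ and $\nu=\overline{0,m_n-1}$. The asymptotics $\sqrt{z_n}=n\pi/a+\kappa_n$ with $\{\kappa_n\}\in l^2$ force the numbers $\sqrt{z_n}$ to be separated, hence simple, for all large $n$, so only finitely many multiplicities occur.

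The core of the argument, and the step I expect to be the main obstacle, is to establish that the generalized cosine system $\{c^{\langle\nu\rangle}(\cdot,z_n)\}_{n\in\mathcal S,\ \nu=\overline{0,m_n-1}}$ is complete in $L^2(0,a)$. For large $n$ one has the quadratic-closeness estimate $\|\cos(\sqrt{z_n}\,t)-\cos(n\pi t/a)\|_{L^2(0,a)}\le C|\kappa_n|$ with $\{\kappa_n\}\in l^2$, while $\{\cos(n\pi t/a)\}_{n\ge 0}$ is a complete orthogonal system in $L^2(0,a)$. After splitting off the finitely many small indices, at which the multiplicities and the associated derivative elements are confined, a standard Bari-type stability argument (as in Proposition~\ref{A3} and \cite{BB,BN1}) yields completeness. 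Granting this, the vanishing conditions give $h_2=0$ in $L^2(0,a)$, and therefore $B\equiv 0$.

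Finally, with $B\equiv 0$ the identity collapses to $\phi_1(z)A(z)\equiv 0$; since $\phi_1$ is a nontrivial entire function, it follows that $A\equiv 0$, that is, $\int_0^a h_1(t)\sin\rho t\,dt\equiv 0$ for all $\rho$. Specializing to $\rho=n\pi/a$ and invoking the completeness of $\{\sin(n\pi t/a)\}_{n\ge 1}$ in $L^2(0,a)$ gives $h_1=0$, which completes the argument.
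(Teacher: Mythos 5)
Your proposal follows essentially the same route as the paper: evaluate \eqref{a5} at the zeros of $\phi_1$, reduce to orthogonality of $h_2$ against the generalized cosine system $\{c^{\langle\nu\rangle}(\cdot,z_n)\}_{n\in\mathcal S,\,\nu=\overline{0,m_n-1}}$, invoke completeness of that system in $L^2(0,a)$ to conclude $h_2=0$, and then use nontriviality of the entire function $\phi_1$ to force $h_1=0$. One point where you are actually cleaner than the paper: you observe that near $z_n$ one has $B=-\phi_1 A/\phi_0$ with $\phi_0(z_n)\ne 0$, so $B$ vanishes to order at least $m_n$ at $z_n$, which yields the orthogonality relations for the \emph{unmodified} system directly. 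The paper instead expands $(\phi_0 B)^{\langle\nu\rangle}(z_n)=0$ by the Leibniz rule and works with the triangularly modified system \eqref{a6}, \eqref{a8}, then argues that this finite triangular modification preserves completeness. Both arguments are correct; yours avoids that detour.

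The one weak point is your justification of the key completeness fact, which is exactly the step the paper outsources: the paper cites the Appendix of \cite{BK} for the Riesz-basis property of $\{c^{\langle\nu\rangle}(t,z_n)\}$ under the asymptotics $\sqrt{z_n}=n\pi/a+\kappa_n$, $\{\kappa_n\}\in l^2$. Your sketch --- quadratic closeness to $\{\cos(n\pi t/a)\}$ plus ``a standard Bari-type stability argument yields completeness'' --- does not work as literally stated. Quadratic closeness to an orthonormal basis only makes the connecting operator of the form $I+K$ with $K$ Hilbert--Schmidt, hence Fredholm of index zero; the perturbed system may then still have a finite defect (e.g.\ repeat one basis element and drop another), and completeness is \emph{equivalent} to $\omega$-linear independence rather than automatic. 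Bari-type theorems, including Proposition~\ref{A3} and Proposition 1.8.5 of \cite{FYU1} as used elsewhere in the paper, take completeness (or independence) as a hypothesis; they do not deliver it. Closing this requires either the citation to \cite{BK}, or a zero-counting/entire-function argument: if $h\perp c^{\langle\nu\rangle}(\cdot,z_n)$ for all $n,\nu$, then $F(z)=\int_0^a h(t)\cos\sqrt z\,t\,dt$ is an entire function of exponential type $a$ in $\sqrt z$ whose zeros, counted with multiplicity, are too numerous for its growth and $L^2$-decay unless $F\equiv 0$. With that repair (or simply the citation, as in the paper), your proof is complete.
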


\begin{proof}
Define
\begin{equation*}
 c^{\langle \nu\rangle}(t,z_n)=\left.\frac{1}{\nu !}\frac{\partial^\nu\cos \sqrt{z} t}{\partial z^\nu}\right|_{z=z_{n}},\quad \nu=\overline{0,m_n-1},\quad n\in \mathcal{S}.
\end{equation*}
It follows from \eqref{a5} that
\begin{equation*}
  \sum_{k=0}^\nu \phi_0^{\langle k\rangle}(z_n)\int_0^a h_2(t)c^{\langle \nu-k\rangle}(t,z_n)dt=0,\quad \nu=\overline{0,m_n-1},\quad n\in \mathcal{S}.
\end{equation*}
From the asymptotics of $z_n$, there is only a finite number of multiple values in $\{z_n\}_{n\ge0}$. Without loss of generality, assume $m_n\ge2$ for $0\le n\le n_0-1$ for some $n_0\in \mathbb{N}$, and $m_n=1$ for $n\ge n_0$.
Since $\phi_0(z_n)\ne0$ for $n\in \mathcal{S}$, we have
\begin{equation}\label{a7}
  \int_0^a h_2(t)c(t,z_n)dt=0,\quad \quad n\in \mathcal{S},
\end{equation}
\begin{equation}\label{a8}
  \int_0^a \!h_2(t)\!\left[\!c^{\langle \nu\rangle}(t,z_n)+\sum_{k=1}^\nu \frac{\phi_0^{\langle k\rangle}(z_n)}{\phi_0(z_n)}c^{\langle \nu-k\rangle}(t,z_n)\!\right]\!dt=0,\quad \nu=\overline{1,m_n-1},\quad n\in \mathcal{S}\cap [0,n_0).
\end{equation}
It is known that $\{c^{\langle \nu\rangle}(t,z_n)\}_{n\in  \mathcal{S},\nu=\overline{0,m_n-1}}$ is a Riesz basis in $L^2(0,a)$ (see Appendix in \cite{BK}). Hence, by replacing  the finite functions $\{c^{\langle \nu\rangle}(t,z_n)\}_{n\in  \mathcal{S}\cap [0,n_0),\nu=\overline{1,m_n-1}}$ with the functions
\begin{equation}\label{a6}
c^{\langle \nu\rangle}(t,z_n)+\sum_{k=1}^\nu \frac{\phi_0^{\langle k\rangle}(z_n)}{\phi_0(z_n)}c^{\langle \nu-k\rangle}(t,z_n),\quad \nu=\overline{1,m_n-1},\quad n\in \mathcal{S}\cap [0,n_0),
\end{equation}
we know that the system of the functions in \eqref{a6} and $\{c(t,z_n)\}_{n\in \mathcal{S}}$ is also complete in $L^2(0,a)$. It follows from \eqref{a7} and \eqref{a8} that $h_2(t)=0$ in $L^2(0,a)$. Returning to \eqref{a5}, we have
\begin{equation*}
  \phi_{1}(z)\int_0^a h_1(t)\frac{\sin \sqrt{z}t}{\sqrt{z}}dt=0,\quad z\in \mathbb{C},
\end{equation*}
which implies $h_1(t)=0$ in $L^2(0,a)$ since $\phi_{1}(z)$ is a nontrivial entire function.
\end{proof}

\begin{proposition}\label{A3}
Assume that $\{\alpha_n\}_{n\ge 1}$ are complex numbers satisfying  $\alpha_k\ne\alpha_l $ and $\alpha_k\ne\overline{\alpha_l} $ for all $k\ne l$. Denote $\mathbf{v}_n(t)=(v_1(t,\alpha_n^2),v_2(t,\alpha_n^2))$, where $v_j(t,\lambda)\ (j=1,2)$ are defined in \eqref{2.23}, where $\lambda=\rho^2$.
Then the following assertions are equivalent:\\
(i) $\{\cos \alpha_n t\}_{n\ge0}$ is a Riesz basis in $L^2(0,2a)$;\\
(ii) $\{\mathbf{v}_n(t)\}_{n\ge0}$ is a Riesz basis in $\mathcal{H}:=L^2(0,a)\times L^2(0,a) $, the inner product of which is defined in \eqref{a11}.
\end{proposition}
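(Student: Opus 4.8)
The plan is to realize the equivalence through a single fixed isomorphism of Hilbert spaces that carries the scalar cosine system onto the vector system. Concretely, I will construct a bounded and boundedly invertible linear operator $J\colon L^2(0,2a)\to\mathcal H$ satisfying $J(\cos\alpha_n\,\cdot)=\mathbf v_n$ for every $n$, and then invoke the standard fact that the Riesz-basis property is preserved in both directions under such an isomorphism: if $\Phi$ is bounded with bounded inverse and $\{e_n\}$ is a Riesz basis, then completeness and the two-sided Riesz bounds transfer to $\{\Phi e_n\}$, and the converse follows by applying $\Phi^{-1}$. Thus $(i)\Leftrightarrow(ii)$ reduces to producing $J$ and checking that it identifies the two systems.

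To build $J$, I first record the values of $v_1,v_2$ from \eqref{2.23} at $\rho=\alpha_n$, namely
\begin{equation*}
v_1(s,\alpha_n^2)=-\sin(\alpha_n a)\sin(\alpha_n s),\qquad v_2(s,\alpha_n^2)=\cos(\alpha_n a)\cos(\alpha_n s),
\end{equation*}
together with the elementary identities, valid for any $\alpha$ and $s\in(0,a)$,
\begin{equation*}
\cos\alpha(a-s)-\cos\alpha(a+s)=2\sin(\alpha a)\sin(\alpha s),\qquad \cos\alpha(a-s)+\cos\alpha(a+s)=2\cos(\alpha a)\cos(\alpha s).
\end{equation*}
These suggest taking $J=R\circ U$, where $U\colon L^2(0,2a)\to\mathcal H$ is the ``folding'' map $(Ug)(s)=\bigl(g(a-s),g(a+s)\bigr)$ and $R\colon\mathcal H\to\mathcal H$ is the mixing map $R(u,w)=\bigl(-\tfrac12(u-w),\tfrac12(u+w)\bigr)$.

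It then remains to verify three ingredients. First, $U$ is unitary: splitting $\int_0^{2a}|g|^2$ at $t=a$ and changing variables on each half gives $\|Ug\|_{\mathcal H}^2=\int_0^a|g(a-s)|^2\,ds+\int_0^a|g(a+s)|^2\,ds=\|g\|_{L^2(0,2a)}^2$, and $U$ is onto since any pair $(u,w)$ is the image of the function equal to $u(a-\cdot)$ on $(0,a)$ and $w(\cdot-a)$ on $(a,2a)$. Second, $R$ is bounded and invertible because, under $\mathcal H\cong L^2(0,a)^2$, it acts by the constant matrix $\tfrac12\bigl(\begin{smallmatrix}-1&1\\ 1&1\end{smallmatrix}\bigr)$, whose determinant is $-\tfrac12\ne0$ (indeed $R=\tfrac1{\sqrt2}$ times a unitary). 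Hence $J=R\circ U$ is bounded with bounded inverse. Third, applying $U$ to $g=\cos\alpha_n\,\cdot$ produces $\bigl(\cos\alpha_n(a-s),\cos\alpha_n(a+s)\bigr)$, and $R$ then turns this, by the two displayed identities, exactly into $\bigl(v_1(s,\alpha_n^2),v_2(s,\alpha_n^2)\bigr)=\mathbf v_n(s)$; thus $J(\cos\alpha_n\,\cdot)=\mathbf v_n$, and isomorphism invariance of the Riesz-basis property concludes the argument.

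The only genuinely substantive point is the recognition of the correct transform $J$; once the folding map $U$ and the mixing matrix $R$ are identified, the remainder is a routine unitarity computation and a $2\times2$ invertibility check, so I do not anticipate a serious obstacle. Finally, the hypotheses $\alpha_k\ne\alpha_l$ and $\alpha_k\ne\overline{\alpha_l}$ are not actually required for the equivalence itself; they only ensure that the two systems consist of genuinely distinct elements, so that the Riesz-basis statements are non-degenerate, and the whole argument is symmetric in $(i)$ and $(ii)$ through the use of $J$ and $J^{-1}$.
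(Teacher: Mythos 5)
Your proof is correct, and it reaches the conclusion by a genuinely different (and tidier) organization than the paper's, even though both rest on the same folding identities. The paper proves the equivalence by verifying the two conditions of the Riesz-basis criterion from Young's book separately: first it computes the Gram identity $\langle \mathbf{v}_j,\mathbf{v}_k\rangle=\frac12\int_0^{2a}\cos(\overline{\alpha_j}t)\cos(\alpha_k t)\,dt$, which yields the norm equality \eqref{a12} (this is exactly your observation that $J=R\circ U$ scales norms by $1/\sqrt2$, since $R^*R=\tfrac12 I$); then, in a second step, it proves that completeness of $\{\cos\alpha_n t\}$ in $L^2(0,2a)$ is equivalent to completeness of $\{\mathbf{v}_n\}$ in $\mathcal{H}$ by an annihilator argument, passing from $\mathbf{h}=(\overline{h_1},\overline{h_2})$ to the folded function $b_0$ in \eqref{a13} and back --- which is precisely the action of (the adjoint of) your operator $J$. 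Your construction of the single bounded, boundedly invertible map $J$ with $J(\cos\alpha_n\,\cdot)=\mathbf{v}_n$ subsumes both steps at once: by isomorphism invariance of the Riesz-basis property you need neither the completeness-plus-two-sided-inequality characterization nor the separate annihilator computation, and the symmetry of the equivalence is manifest through $J$ and $J^{-1}$. What the paper's route buys is an explicit Gram/norm identity stated on finite linear combinations; what yours buys is brevity and structure. Your closing remark is also accurate: the hypotheses $\alpha_k\ne\alpha_l$ and $\alpha_k\ne\overline{\alpha_l}$ play no role in the equivalence itself (the paper's own proof never uses them either); they matter only elsewhere, in the application within Lemma \ref{l4}.
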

\begin{proof}
By Theorem 9 in \cite[p.32]{Yo}, we know that a system of functions $\{f_n(t)\}_{n\ge0}$ is a Riesz basis in some Hilbert space ${\mathbb{H}}$ if and only if it is complete and satisfies the two side inequality
\begin{equation*}
  C_1\sum_{n=0}^N |\beta_n|^2\le \left\|\sum_{n=0}^N \beta_n f_n\right\|_{{\mathbb{H}}}\le  C_2\sum_{n=0}^N |\beta_n|^2,
\end{equation*}
where $\{\beta_n\}$ is an arbitrary sequence, and $N\ge 0$ is an arbitrary integer, $C_1$ and $C_2$ are some fixed constants.
In view of \eqref{2.23}, we have that
\begin{align*}
\notag \langle \mathbf{v}_j,\mathbf{v}_k\rangle=&\frac{1}{4}\int_0^a [ \cos\overline{\alpha_j}(a-t)-\cos \overline{\alpha_j}(a+t)][ \cos{\alpha_k}(a-t)-\cos {\alpha_k}(a+t)]dt\\
&+\frac{1}{4}\int_0^a [ \cos\overline{\alpha_j}(a-t)+\cos \overline{\alpha_j}(a+t)][ \cos{\alpha_k}(a-t)+\cos {\alpha_k}(a+t)]dt\\
=&\frac{1}{2}\int_0^a \cos(\overline{\alpha_j}t)\cos({\alpha_k}t)dt+\frac{1}{2}\int_a^{2a} \cos(\overline{\alpha_j}t)\cos({\alpha_k}t)dt\\
=&\frac{1}{2}\int_0^{2a} \cos(\overline{\alpha_j}t)\cos({\alpha_k}t)dt.
\end{align*}
Hence, we have
\begin{equation}\label{a12}
\left\|\sum_{n=0}^N \beta_n \mathbf{v}_n\right\|_{\mathcal{H}}^2=\sum_{j=0}^N\sum_{k=0}^N\overline{\beta_j}\beta_k\langle \mathbf{v}_j,\mathbf{v}_k\rangle=\frac{1}{2}\left\|\sum_{n=0}^N \beta_n \cos({\alpha_n}t)\right\|_{L^2(0,2a)}^2.
\end{equation}
In view of \eqref{a12}, it only remains to show that  $\{\cos \alpha_n t\}_{n\ge0}$ is complete in $L^2(0,2a)$ if and only if $\{(\mathbf{v}_n(t)\}_{n\ge0}$ is complete in $\mathcal{H} $.

 Assume that $\{\cos \alpha_n t\}_{n\ge0}$ is complete in $L^2(0,2a)$. Let $\mathbf{h}=(\overline{h_1},\overline{h_2})\in \mathcal{H}$ such that $\langle \mathbf{h},\mathbf{v}_n\rangle=0$ for all $n\ge0$. Then
the function
\begin{equation}\label{a14}
  F_0(\lambda):=\int_0^a \left( h_1(t)v_1(t,\lambda)+h_2(t)v_2(t,\lambda) \right)dt
\end{equation}
has zeros $\{\alpha_n\}_{n\ge0}$.
By the definition of $v_j(t,\lambda)$ (cf.\eqref{2.23}), we get
  \begin{equation}\label{a13}
    F_0(\lambda)=\int_0^{2a} b_0(t)\cos \rho tdt,\quad b_0(t)=\left\{\begin{split}
           &\frac{h_1(a-t)+h_2(a-t)}{2}, \quad  0<t<a, \\
              &\frac{h_2(t-a)-h_1(t-a)}{2}, \quad a<t<2a.
         \end{split}\right.
  \end{equation}
Since $\{\cos \alpha_n t\}_{n\ge0}$ is complete in $L^2(0,2a)$, we have $b_0(t)=0$ in $L^2(0,2a)$. Hence $h_1(t)=h_2(t)=0$ in $L^2(0,a)$.

Assume that $\{\mathbf{v}_n(t)\}_{n\ge0}$ is complete in $\mathcal{H} $. Let $b\in L^2(0,2a)$ such that $\int_0^{2a}b(t)\cos \alpha_n tdt=0$ for all $n\ge0$. Namely, the function
$F_2(\lambda)=\int_0^{2a}b(t)\cos \rho tdt$ has zeros $\{\alpha_n\}_{n\ge0}$. Note that the function $F_2(\lambda)$ also has the form of \eqref{a14} with
$$h_1(t)=b(a-t)-b(a+t),\quad h_2(t)=b(a-t)+b(a+t),\quad t\in(0,a).$$
Since $\{(\mathbf{v}_n(t)\}_{n\ge0}$ is complete in $\mathcal{H} $, then $h_1(t)=h_2(t)=0$ in $L^2(0,a)$ and so $b(t)=0$ in $L^2(0,2a)$.
\end{proof}

\begin{proposition}\label{pA4}
Let $\{\rho_n\}$ be separated complex numbers with bounded imaginary parts, namely, $|\rho_n-\rho_m|\ge c_0>0$ whenever $n\ne m$ and $\sup_{n} |\mathrm{Im }\rho_n|\le c_1<\infty$. If $W\in L^2(-b,b)$, then
\begin{equation}\label{akz}
 \sum_{n}\left|\int_{-b}^b W(t)e^{i \rho_n t}dt\right|^2\le C_0 \|W\|^2_{L^2(-b,b)},
\end{equation}
where $C_0>0$ depends only on $b$, $c_0$ and $c_1$.
\end{proposition}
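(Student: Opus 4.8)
The plan is to view $F(\rho) := \int_{-b}^b W(t)\,e^{i\rho t}\,dt$ as an entire function of exponential type at most $b$ whose restriction to every horizontal line is square-integrable, and then to combine a Plancherel-type identity on horizontal lines with the subharmonicity of $|F|^2$. First I would fix $\sigma\in\mathbb{R}$ and observe that $F(x+i\sigma)=\int_{-b}^b \bigl(W(t)e^{-\sigma t}\bigr)e^{ixt}\,dt$ is, up to the factor $2\pi$, the Fourier transform of the function $t\mapsto W(t)e^{-\sigma t}$ supported on $[-b,b]$; this function lies in $L^1\cap L^2$ by compact support and the Cauchy--Schwarz inequality, so Parseval's identity gives
\begin{equation*}
\int_{-\infty}^{\infty} |F(x+i\sigma)|^2\,dx = 2\pi \int_{-b}^b |W(t)|^2 e^{-2\sigma t}\,dt \le 2\pi\, e^{2b|\sigma|}\,\|W\|_{L^2(-b,b)}^2 .
\end{equation*}
This is the quantitative Plancherel--P\'olya estimate controlling $F$ on any horizontal line by its data, and the essential point is that the constant grows only like $e^{2b|\sigma|}$ in the height.

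Next I would convert line integrals into the pointwise values $F(\rho_n)$ via the sub-mean-value property. Since $F$ is holomorphic, $|F|^2$ is subharmonic (indeed $\Delta|F|^2=4|F'|^2\ge0$), so with $r:=c_0/2$ and $dA$ the planar Lebesgue measure,
\begin{equation*}
|F(\rho_n)|^2 \le \frac{1}{\pi r^2}\int_{D(\rho_n,r)} |F(z)|^2\,dA(z).
\end{equation*}
The separation hypothesis $|\rho_n-\rho_m|\ge c_0$ forces the disks $D(\rho_n,r)$ to have pairwise disjoint interiors, while $|\mathrm{Im}\,\rho_n|\le c_1$ places all of them inside the strip $S:=\{z:|\mathrm{Im}\,z|\le c_1+r\}$. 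Summing over $n$ and using disjointness, I obtain
\begin{equation*}
\sum_n |F(\rho_n)|^2 \le \frac{1}{\pi r^2}\int_S |F(z)|^2\,dA(z).
\end{equation*}

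Finally I would evaluate the strip integral by Fubini, slicing $S$ into horizontal lines and inserting the first step:
\begin{equation*}
\int_S |F|^2\,dA = \int_{-(c_1+r)}^{c_1+r}\!\int_{-\infty}^{\infty} |F(x+i\sigma)|^2\,dx\,d\sigma \le 2\pi\,\|W\|_{L^2(-b,b)}^2 \int_{-(c_1+r)}^{c_1+r} e^{2b|\sigma|}\,d\sigma .
\end{equation*}
The last integral is finite and depends only on $b$, $c_0$ and $c_1$, so \eqref{akz} follows with $C_0=\tfrac{2}{r^2}\int_{-(c_1+r)}^{c_1+r} e^{2b|\sigma|}\,d\sigma$, which indeed involves only $b$, $c_0$ and $c_1$. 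I expect the main obstacle to be conceptual rather than technical: one must recognize that bounding each $|F(\rho_n)|^2$ by a \emph{local} area integral is exactly what turns the separation condition into an honest $\ell^2$ estimate, and that the explicit exponential-type control on horizontal lines is what keeps the final constant dependent only on the three stated quantities. The Fourier--Parseval and Fubini steps are routine once the compact support and the bound $|\mathrm{Im}\,\rho_n|\le c_1$ are used to justify the interchanges.
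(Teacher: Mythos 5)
Your proof is correct, and it differs from the paper's in a meaningful way: the paper's argument is citation-based, while yours is self-contained. The paper defines the same entire function $f(\rho)=\int_{-b}^b W(t)e^{i\rho t}\,dt$, applies Plancherel to get $\|f\|_{L^2(\mathbb{R})}^2=2\pi\|W\|_{L^2(-b,b)}^2$, and then simply invokes Theorem~17 (and its Remark) from Young's book --- the Plancherel--P\'olya inequality for separated sequences with bounded imaginary parts --- to conclude $\sum_n|f(\rho_n)|^2\le C_1\|f\|_{L^2(\mathbb{R})}^2$. What you have done is essentially reprove that cited theorem in this special case: the Parseval identity on each horizontal line with the explicit growth factor $e^{2b|\sigma|}$, the area sub-mean-value inequality for the subharmonic function $|F|^2$ on disks of radius $c_0/2$, the disjointness of those disks from the separation hypothesis, and Tonelli over the strip $|\mathrm{Im}\,z|\le c_1+c_0/2$. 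This is the standard proof mechanism behind Plancherel--P\'olya, and all of your steps are sound (in particular, the interchange of integrals needs no justification beyond nonnegativity of the integrand). What your route buys is an explicit, elementary derivation with a concrete constant $C_0=\tfrac{2}{r^2}\int_{-(c_1+r)}^{c_1+r}e^{2b|\sigma|}\,d\sigma$, $r=c_0/2$, visibly depending only on $b$, $c_0$, $c_1$; what the paper's route buys is brevity, delegating the analytic core to a standard reference (with a second alternative reference to Buterin's lemma).
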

\begin{proof}
Define $f(\rho)=\int_{-b}^b W(t)e^{i \rho t}dt$. Then $f(\rho)$ is an entire function  of exponential type $\le b$. By the theory of the Fourier transform, we know that $\|f\|_{L^2(-\infty,\infty)}^2=2\pi \|W\|_{L^2(-b,b)}^2$.
Since $\{\rho_n\}$ are  separated complex numbers with bounded imaginary parts, from Theorem 17 and its Remark in \cite[p.96-98]{Yo}, we have
\begin{equation*}
 \sum_{n}\left|f (\rho_n)\right|^2\le C_1 \|f\|_{L^2(-\infty,\infty)}^2=2\pi C_1 \|W\|_{L^2(-b,b)}^2,
\end{equation*}
which implies \eqref{akz}. Another proof of Proposition~\ref{pA4} can be obtained from Lemma~1 in \cite{ButMN}.
\end{proof}

\begin{proposition}[See, e.g., \cite{YBX}]\label{pA5}
Let $\{ v_n \}$ be a Riesz basis in a Hilbert space $\mathbb{H}$. Then there exists $\varepsilon > 0$, such that every sequence
$\{ \tilde v_n \}$, satisfying
$$
  \mathcal{V} := \left( \sum_n \| v_n - \tilde v_n \|^2_{ \mathbb H} \right)^{1/2}\le \varepsilon,
$$
is also a Riesz basis in $\mathbb{H}$. Furthermore, for some $\tau\in \mathbb{H}$, denote $\tau_n := (\tau, v_n)_\mathbb{H}$, where $(\cdot,\cdot)_{\mathbb{H}}$ is the inner product in $\mathbb{H}$,
then for any sequence $\{ \tilde \tau_n \}$ satisfying
$$
    \Omega := \left( \sum_n |\tau_n - \tilde \tau_n|^2 \right)^{1/2} \le \varepsilon,
$$
there exists a unique $\tilde \tau \in \mathbb{H}$, such that $\tilde \tau_n = (\tilde \tau, \tilde v_n)_{\mathbb{H}}$ for all $n$,
moreover,
$$
\| \tau - \tilde \tau \|_{\mathbb{H}} \le C (\mathcal{V}  + \Omega),
$$
where the constant $C$ depends only on $\{ v_n \}$ and $\tau$.
\end{proposition}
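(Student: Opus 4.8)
The plan is to translate the Riesz-basis hypothesis into a statement about a bounded, boundedly invertible synthesis operator and then to dispatch both assertions by a Neumann-series perturbation argument. Fix an orthonormal basis $\{e_n\}$ of $l^2$ and define synthesis operators $T,\tilde T:l^2\to\mathbb{H}$ by $Tc=\sum_n c_nv_n$ and $\tilde Tc=\sum_n c_n\tilde v_n$ for $c=\{c_n\}\in l^2$. Since $\{v_n\}$ is a Riesz basis, $T$ is bounded and boundedly invertible with $v_n=Te_n$. By the Cauchy--Schwarz inequality, $\|(T-\tilde T)c\|_{\mathbb{H}}\le\mathcal V\,\|c\|_{l^2}$, so $\|T-\tilde T\|\le\mathcal V$. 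Choosing $\varepsilon<\|T^{-1}\|^{-1}$ and writing $\tilde T=T\bigl(I-T^{-1}(T-\tilde T)\bigr)$, the Neumann series shows that $\tilde T$ is boundedly invertible whenever $\mathcal V\le\varepsilon$, and moreover $\|\tilde T^{-1}\|\le\|T^{-1}\|\,(1-\|T^{-1}\|\mathcal V)^{-1}$ is bounded by a constant depending only on $\{v_n\}$. Since $\tilde v_n=\tilde Te_n$, this already proves that $\{\tilde v_n\}$ is a Riesz basis; carrying the explicit bound on $\|\tilde T^{-1}\|$ is precisely what keeps all later constants uniform in the perturbation.

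For the moment problem I would reformulate the condition $\tilde\tau_n=(\tilde\tau,\tilde v_n)_{\mathbb{H}}$ through the adjoint. With the inner product taken conjugate-linear in the first argument (as in \eqref{a11}), the analysis operator acts by $(T^*x)_n=(v_n,x)_{\mathbb{H}}$, so the data of $\tau$ is encoded as $T^*\tau=\{\overline{\tau_n}\}$ and the target equation becomes $\tilde T^*\tilde\tau=\{\overline{\tilde\tau_n}\}$. Because $\{\tau_n\}\in l^2$ (as $T^*$ is bounded) and $\Omega\le\varepsilon$, the sequence $\{\overline{\tilde\tau_n}\}$ lies in $l^2$; since $\tilde T^*=(\tilde T)^*$ is boundedly invertible by the first step, there is a unique solution $\tilde\tau=(\tilde T^*)^{-1}\{\overline{\tilde\tau_n}\}$, and uniqueness is exactly injectivity of $\tilde T^*$, equivalently completeness of $\{\tilde v_n\}$.

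It then remains to prove the stability estimate. I would subtract the two representations $\tau=(T^*)^{-1}\{\overline{\tau_n}\}$ and $\tilde\tau=(\tilde T^*)^{-1}\{\overline{\tilde\tau_n}\}$ and split
\begin{equation*}
\tau-\tilde\tau=\bigl[(T^*)^{-1}-(\tilde T^*)^{-1}\bigr]\{\overline{\tau_n}\}+(\tilde T^*)^{-1}\bigl(\{\overline{\tau_n}\}-\{\overline{\tilde\tau_n}\}\bigr).
\end{equation*}
The second term is bounded by $\|(\tilde T^*)^{-1}\|\,\Omega\le C\Omega$ using the uniform bound from the first paragraph. For the first term I would use the resolvent identity $(T^*)^{-1}-(\tilde T^*)^{-1}=(T^*)^{-1}(\tilde T^*-T^*)(\tilde T^*)^{-1}$ together with $\|\tilde T^*-T^*\|=\|\tilde T-T\|\le\mathcal V$ to obtain $\|(T^*)^{-1}-(\tilde T^*)^{-1}\|\le C\mathcal V$; since $\|\{\overline{\tau_n}\}\|_{l^2}=\|T^*\tau\|_{l^2}\le\|T^*\|\,\|\tau\|_{\mathbb{H}}$, the first term is bounded by $C\mathcal V$ with $C$ depending on $\{v_n\}$ and $\tau$. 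Adding the two bounds yields $\|\tau-\tilde\tau\|_{\mathbb{H}}\le C(\mathcal V+\Omega)$.

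The argument is essentially routine once this operator dictionary is set up, so the only real obstacle is bookkeeping of two kinds. First, one must fix the conjugation convention carefully, so that the moment equations become $T^*\tau=\{\overline{\tau_n}\}$ and $\tilde T^*\tilde\tau=\{\overline{\tilde\tau_n}\}$ rather than their conjugates. Second, and more importantly, one must verify that \emph{every} constant appearing in the estimate---in particular $\|\tilde T^{-1}\|$, and hence the final $C$---is controlled uniformly in the perturbation through the Neumann series, so that $C$ depends only on $\{v_n\}$ and $\tau$ and not on the specific nearby system $\{\tilde v_n\}$ or the specific data $\{\tilde\tau_n\}$.
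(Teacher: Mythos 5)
The paper contains no proof of Proposition~\ref{pA5}: it is imported with the reference \cite{YBX}, so the only meaningful comparison is with the standard argument from that cited literature. Your proof is correct. The synthesis-operator route --- Neumann series for the invertibility of $\tilde T$, then the adjoint equation $\tilde T^*\tilde\tau = \{\overline{\tilde\tau_n}\}$ and the resolvent identity for the stability bound --- is the operator-theoretic repackaging of the biorthogonal-system proof used in that source: there one sets $\tilde\tau = \sum_n \overline{\tilde\tau_n}\,\tilde v_n^*$ and estimates the biorthogonal sequence, which is the same computation since $\tilde v_n^* = (\tilde T^*)^{-1}e_n$; your version avoids manipulating $\{\tilde v_n^*\}$ explicitly and makes the uniformity of the constants transparent. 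Two points you pass over quickly are worth a sentence each, though both are immediate: (i) $\tilde T$ is well defined on all of $l^2$ because $\sum_n c_n(v_n - \tilde v_n)$ converges absolutely, as $\sum_n |c_n|\,\|v_n - \tilde v_n\|_{\mathbb{H}} \le \|c\|_{l^2}\,\mathcal{V}$, while $\sum_n c_n v_n$ converges by the Riesz-basis property --- this is precisely your bound $\|T - \tilde T\| \le \mathcal{V}$; (ii) your conjugation bookkeeping under the convention \eqref{a11} is right: $(T^*x)_n = (v_n, x)_{\mathbb{H}}$, so the moment conditions become $T^*\tau = \{\overline{\tau_n}\}$ and $\tilde T^*\tilde\tau = \{\overline{\tilde\tau_n}\}$, and existence and uniqueness of $\tilde\tau$ amount to bijectivity of $\tilde T^* = (\tilde T)^*$, inherited from $\tilde T$. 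Your final constant depends only on $\|T\|$, $\|T^{-1}\|$, $\varepsilon$, and $\|\tau\|_{\mathbb{H}}$, i.e., only on $\{v_n\}$ and $\tau$, exactly as the statement requires.
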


To deal with the multiple eigenvalues in the local solvability and stability, we need the following proposition.

\begin{proposition}[See \cite{MW}]\label{lA1}
  Assume that $f(z)$ is an entire function, and $z_1$,..., $z_m$ (not necessarily distinct) are in the disc $\{z\colon |z-z_0|\le r<1/2\}$. Let $p(z)$ be the unique polynomial of degree at most $m-1$ which interpolates $f(z)$ and its derivatives in the usual way at the points $z_j$, $j=\overline{1,m}$: namely, if $z_j$ appears $m_j$ times, then $p^{(n)}(z_j)=f^{(n)}(z_j)$ for $n=\overline{0,m_j-1}$. Then for each $j=\overline{0,m-1}$,
  \begin{equation}\label{xqi}
    \left|f^{(j)}(z_0)-p^{(j)}(z_0)\right|\le C r^{m-j}\sup_{|z-z_0|=1}\left|f(z)\right|,
  \end{equation}
  here the constant $C$ depends only on $m$.
\end{proposition}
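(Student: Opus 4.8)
The plan is to represent the Hermite interpolation error by a Cauchy-type contour integral and then estimate it on the circle $\Gamma := \{\zeta : |\zeta - z_0| = 1\}$, which encloses all the nodes because $|z_j - z_0| \le r < 1/2 < 1$. Writing $\Omega(z) := \prod_{j=1}^{m}(z - z_j)$ for the nodal polynomial, I would start from the classical Hermite remainder formula: for every $z$ inside $\Gamma$,
\begin{equation*}
f(z) - p(z) = \frac{1}{2\pi i}\oint_\Gamma \frac{\Omega(z)}{\Omega(\zeta)}\cdot\frac{f(\zeta)}{\zeta - z}\,d\zeta .
\end{equation*}
This follows from Cauchy's formula $f(z) = \frac{1}{2\pi i}\oint_\Gamma \frac{f(\zeta)}{\zeta - z}\,d\zeta$ together with the fact that $\frac{1}{2\pi i}\oint_\Gamma \frac{\Omega(\zeta) - \Omega(z)}{\Omega(\zeta)(\zeta - z)}f(\zeta)\,d\zeta$ is a polynomial in $z$ of degree at most $m-1$ (the kernel $\frac{\Omega(\zeta)-\Omega(z)}{\zeta - z}$ is entire in $\zeta$ and of degree $m-1$ in $z$) whose values and derivatives at the nodes reproduce those of $f$, so that it equals $p(z)$. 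The confluent (repeated-node) case needs no separate treatment, since the representation depends only on $\Omega$ and on $f$ being analytic inside $\Gamma$.

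Next I would differentiate this identity $j$ times in $z$ and set $z = z_0$. Applying the Leibniz rule to the product $\Omega(z)\cdot(\zeta - z)^{-1}$ gives
\begin{equation*}
f^{(j)}(z_0) - p^{(j)}(z_0) = \frac{1}{2\pi i}\oint_\Gamma \frac{f(\zeta)}{\Omega(\zeta)}\sum_{i=0}^{j}\binom{j}{i}\Omega^{(j-i)}(z_0)\frac{i!}{(\zeta - z_0)^{i+1}}\,d\zeta .
\end{equation*}
I would then invoke three elementary estimates valid on $\Gamma$. First, $|\zeta - z_j| \ge 1 - r > 1/2$ yields $|\Omega(\zeta)| \ge (1-r)^m$. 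Second, $|\zeta - z_0| = 1$ makes each factor $|(\zeta - z_0)^{-(i+1)}| = 1$. Third, expanding $\Omega(z) = \prod_{j}\bigl((z - z_0) + (z_0 - z_j)\bigr)$ in powers of $(z - z_0)$ identifies $\Omega^{(k)}(z_0)/k!$ with the elementary symmetric polynomial $e_{m-k}(z_0 - z_1,\dots,z_0 - z_m)$, whence $|\Omega^{(k)}(z_0)| \le k!\binom{m}{k}r^{m-k}$.

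Combining these, each term in the Leibniz sum is bounded by $\binom{j}{i}(j-i)!\binom{m}{j-i}\,i!\,r^{m-j+i}$; since $r < 1$ and $0 \le i \le j$, the factor $r^{m-j+i} \le r^{m-j}$ pulls out, leaving a combinatorial constant depending only on $j$ and $m$, hence only on $m$ (as $j \le m-1$). The standard $ML$-estimate for the contour integral, with circumference $2\pi$ and $|\Omega(\zeta)|^{-1} \le (1-r)^{-m} < 2^m$, then yields
\begin{equation*}
\left|f^{(j)}(z_0) - p^{(j)}(z_0)\right| \le C\,r^{m-j}\sup_{|\zeta - z_0| = 1}|f(\zeta)|,
\end{equation*}
with $C = C(m)$, which is precisely \eqref{xqi}. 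I expect the main obstacle to be the careful justification of the Hermite remainder formula in the confluent case and the bookkeeping in the Leibniz expansion; once the contour representation is secured, the estimates are routine, and the decisive gain of the factor $r^{m-j}$ comes exactly from the lowest-order term $\Omega(z_0)$ having size $r^m$, with each higher derivative of $\Omega$ costing one power of $r$.
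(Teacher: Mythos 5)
Your proof is correct. Note that the paper itself gives no proof of Proposition \ref{lA1}: it is stated with the attribution ``See \cite{MW}'' and used as a black box, so there is no internal argument to compare against. Your contour-integral treatment is exactly the classical route (and essentially the one in the cited reference \cite{MW}): the Hermite remainder formula $f(z)-p(z)=\frac{1}{2\pi i}\oint_\Gamma \frac{\Omega(z)}{\Omega(\zeta)}\frac{f(\zeta)}{\zeta-z}\,d\zeta$, justified in the confluent case by observing that the complementary integral is a polynomial of degree at most $m-1$ whose difference from $f$ is $\Omega$ times an analytic function, followed by Leibniz differentiation at $z_0$ and the three estimates $|\Omega(\zeta)|\ge(1-r)^m>2^{-m}$ on $\Gamma$, $|\zeta-z_0|=1$, and $|\Omega^{(k)}(z_0)|\le k!\binom{m}{k}r^{m-k}$ via elementary symmetric polynomials; the bookkeeping correctly extracts the factor $r^{m-j}$ with a constant depending only on $m$. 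In effect you have supplied the missing self-contained proof of the proposition rather than an alternative to one.
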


\end{appendix}

\end{document}